\documentclass[a4paper]{amsart}
\usepackage[latin1]{inputenc}
\usepackage{amsfonts}
\usepackage{amsmath,latexsym,amssymb,amsfonts, amsthm}
\usepackage{esint}
\usepackage{cite}
\usepackage{graphicx}
\usepackage{amscd}
\usepackage{color}
\usepackage{bm}             
\usepackage{enumerate}
\usepackage[dvips]{epsfig}
\usepackage{psfrag}
\usepackage{epsfig}
\usepackage{comment}

\usepackage[
  hmarginratio={1:1},     
  vmarginratio={1:1},     
  textwidth=16cm,        
  textheight=21cm,
  heightrounded,          
]{geometry}

\usepackage{tikz}
\usepackage{graphicx,color}
\usepackage[colorlinks]{hyperref}
\hypersetup{linkcolor=blue,citecolor=blue,filecolor=black,urlcolor=blue}

\newcommand{\lp}[2]{\|#1\|_{L^{#2}(\cG)}}

\newcommand{\beq}{\begin{equation}}
\newcommand{\eeq}{\end{equation}}

\newtheorem{thm}{Theorem}
\newtheorem{theorem}[thm]{Theorem}
\newtheorem{proposition}[thm]{Proposition}

\newtheorem{cor}[thm]{Corollary}
\newtheorem{lem}[thm]{Lemma}
\newtheorem{lemma}[thm]{Lemma}

\theoremstyle{definition}
\newtheorem{rem}[thm]{Remark}

\newtheorem{definition}[thm]{Definition}

\numberwithin{thm}{section}

\numberwithin{equation}{section}

\newcommand{\R}{\mathbb{R}}
\newcommand{\cG}{\mathcal{G}}
\newcommand{\Gcal}{\mathcal{G}}

\newcommand{\cE}{\mathcal{E}}
\renewcommand{\a}{\alpha}

\title[Ground states for the NLSE with combined nonlinearity on periodic metric graphs]{Ground states for the NLS equation with combined nonlinearity on periodic metric graphs}
\author{Nicola Soave and Lorenzo Villata}
\address{Nicola Soave and Lorenzo Villata\newline\indent
Dipartimento di Matematica ``Giuseppe Peano''
\newline\indent
Universit\`a degli Studi di Torino
\newline\indent
Via Carlo Alberto 10, 10123, Torino, Italy   }
\email{nicola.soave@unito.it; lorenzo.villata@unito.it}

\keywords{Nonlinear Schr\"odinger equation; periodic metric graphs; combined nonlinearities; ground states; $L^2$-critical; dimensional crossover}
\subjclass[2020]{35R02, 35Q55 (primary), 81Q35, 35A15 (secondary)}
\thanks{Nicola Soave is a member the INDAM-GNAMPA group, and is partially supported by the PRIN Project no.~2022R537CS ``$NO^3$ -- Nodal optimization, NOnlinear elliptic equations, NOnlocal geometric problems, with a focus on regularity'', CUP D53D23005930006, funded by the European Union - Next Generation EU within the PRIN 2022 program (D.D. 104 - 02 02 2022 Ministero dell'Universit\`a e della Ricerca, Italy).}

\begin{document}

\begin{abstract}
We investigate the existence of ground states with prescribed mass for the Non-Linear Schr\"odinger energy with combined nonlinearities on $1$ and $2$-periodic metric graphs. This is the natural prosecution of previous studies concerning on the one hand the homogeneous NLS equation on periodic graphs, and on the other hand the NLS equation with combined nonlinearity on noncompact metric graphs with finitely many vertexes and edges. As in the latter case, it turns out that the interplay between different nonlinearities creates new phenomena with respect to the homogenous setting, but, due to the periodicity, in a quite different way; in particular, for $2$-periodic graphs, the so called dimensional crossover occurs. 

As a by-product, we extend existing results for the homogeneous NLS on the square and honeycomb grids to general $2$-periodic graphs. Furthermore, we also improve previous results obtained for the inhomogeneous NLS on noncompact graphs with finitely many vertexes and edges.
\end{abstract}

\maketitle

\section{Introduction}
In this paper we investigate the existence of ground states for the NLS energy with combined nonlinearities
\beq\label{def E}
E_{p,q,\alpha}(u, \cG) := \int_{\cG} \left(\frac12 |u'|^2 - \frac1p|u|^p - \frac{\alpha}{q}|u|^q\right)dx,
\eeq
under the mass constraint 
\[
u \in H^1_\mu(\cG) := \left\{ u \in H^1(\cG): \ \int_{\cG} |u|^2\,dx = \mu \right\},
\]
where $\a \in \R$, $2<q<p \le 6)$, and $\cG$ is a non-compact \emph{periodic} metric graph. A metric graph is a connected metric space obtained by glueing together a finite number of closed line intervals, the \emph{edges} of the graph, by identifying some of their endpoints. The endpoints are the \emph{vertices} of the graph. Any bounded edge $\mathrm{e}$ is identified with a closed bounded interval $[0,\ell_{\mathrm{e}}]$ (where $\ell_{\mathrm{e}}$ is the length of $\mathrm{e}$), while unbounded edges are identified with (a copy of) the closed half-line $[0,+\infty)$. In this paper we focus on $1$ and $2$-\emph{periodic} metric graphs; we refer to Definition \ref{def: periodic graph} below, but the idea is that a graph $\cG$ is $1$ or $2$-periodic if it is built of an infinite number of copies of a fixed compact graph, the periodicity cell, glued together along one or two non-parallel directions. Prototypical examples are the ladder-type graph, and the square grid.

The current investigation is the natural prosecution of \cite{combined}, where the NLS energy with combined nonlinearity on noncompact metric graphs with a finite number of edges and vertexes was considered. Any such graph contains at least a half-fine, while here by periodicity we assume that $\cG$ has infinitely many edges and vertexes, and no half-line. This marks a significant difference with respect to \cite{combined}, as already observed in the homogeneous case, which was studied in \cite{periodic, square_grid, honeycomb, spatial_grid} in the periodic case, and in \cite{2015, threshold, critical} in the noncompact non-periodic case (we also refer to \cite{Do, DoTe, PiSoVe, SeTe} for related results).

With $\Gcal$ as above, a function $u: \Gcal \to \R$ can be identified with a vector of functions $\{u_{\mathrm{e}}\}$, where each $u_{\mathrm{e}}$ is defined on the corresponding interval $[0,\ell_{\mathrm{e}}]$ (or $[0,+\infty)$ if $\mathrm{e}$ is unbounded). Endowing each edge with Lebesgue measure, one can define $L^p$ spaces over $\cG$, denoted by $L^p(\cG)$, in a natural way, with norm
\[
\|u\|_{L^p(\cG)}^p = \sum_{\mathrm{e}} \|u_{\mathrm{e}}\|_{L^p(\mathrm{e})}^p.
\]
The Sobolev space $H^1(\cG)$ is defined as the set of functions $u: \cG \to \R$ such that $u_{\mathrm{e}} \in H^1([0, \ell_{\mathrm{e}}])$ for every $\mathrm{e}$, and $u$ is continuous on $\cG$ (in particular, if a vertex $\mathrm{v}$ belongs to more than one edge, the corresponding functions $u_{\mathrm{e}}$ take the same value on $\mathrm{v}$); the norm in $H^1(\cG)$ is naturally defined as
\[
\|u\|_{H^1(\cG)}^2 = \sum_{\mathrm{e}} \|u_{\mathrm{e}}'\|_{L^2(\mathrm{e})}^2 + \|u_{\mathrm{e}}\|_{L^2(\mathrm{e})}^2.
\]
In this framework, by \emph{a ground state of mass $\mu$} we mean a minimizer for the problem
\beq\label{def gs level}
\mathcal{E}_{p,q,\alpha}(\mu,\cG):= \inf_{u \in H^1_\mu(\cG)} E_{p,q,\alpha}(u, \cG).
\eeq
The value $\mathcal{E}_{p,q,\alpha}(\mu,\cG)$ is called \emph{ground state energy level}, and it is clear that, searching for ground states, it is sufficient to work with real valued functions. \\
Ground states, and other critical points of $E_{p,q,\alpha}(\cdot,\Gcal)$ constrained on $H^1_{\mu}(\Gcal)$, satisfy, for some $\lambda \in \R$, the stationary NLS equation
\begin{equation}\label{eq:NLSE}
-u'' +  \lambda u = |u|^{p-2} u + \alpha |u|^{q-2} u 
\end{equation}
on every edge; moreover, at each vertex the Kirchhoff condition is satisfied, which requires
 the sum of all the outgoing derivatives to vanish (see \cite[Proposition 3.3]{2015}). The study of nonlinear Schr\"odinger equations on metric graphs has attracted considerable attention in the last decade. From the mathematical point of view, the problem presents a number of interesting new features with respect to the classical Euclidean setting (see e.g. \cite{2015, threshold, critical}). Furthermore, nonlinear evolution on graphs turns out to be relevant also from the physical point of view (see e.g. \cite{Meh, BoCa, SMSS}). Regarding Schr\"odinger equations on periodic graphs, we refer the interested reader to \cite{Ex, ExTu} and to \cite[Chapter 4]{intro_graphs} for the linear equation. Nonlinear problems have been studied in \cite{PeSc}, where the authors considered a specific example of $1$-periodic graph; in \cite{Pank}, where a \emph{fixed-frequency approach} via the Nehari manifold is used on general periodic graphs (such an approach is not directly effective to deal with the fixed mass problem); in \cite{periodic}, where the fixed-mass homogeneous problem is studied on general $1$-periodic graphs; in \cite{square_grid} and \cite{honeycomb}, where the fixed-mass homogeneous problem is studied on the (two-dimensional) square and hexagonal grids, respectively; and to \cite{spatial_grid}, which concerns the problem on the spatial cubic grid. 
 
Concerning the NLS with combined nonlinearities, the literature of the topic is huge, and we only provide a brief and incomplete list of contributions (we refer the interested reader to references therein for a more comprehensive picture): we refer to \cite{TaViZh} for a detailed analysis of its dynamical properties in $\R^d$; to \cite{JJLV, JeLe, So1, So2, WeWu} for existence and stability of bound and ground states in $\R^d$ ; to \cite{combined} for existence of ground states on noncompact metric graphs with finitely many edges and vertexes; to \cite{LiZhLi} for the same problem with localized nonlinearity; and to \cite{BBDT, BoDo0, BoDo, LiX24} to doubly nonlinear equations with a standard nonlinearity combined with a pointwise $\delta$ nonlinearity. In all these contributions, it emerges that the interplay between different nonlinear terms creates a richer picture with respect to the homogeneous case.

 \medskip

Let us now discuss in more detail the results that are directly related to our study. The starting points of our investigation are on one side the results for the associated homogeneous problem on periodic graphs; and, on the other side, the study of the NLS energy with combined nonlinearity on noncompact graphs with a finite number of edges and vertexes. 

The homogeneous problem is obtained by taking $\alpha=0$ in \eqref{def E}. Note that the choice of $q$ is irrelevant, and we use the notation
\beq\label{homog func}
E_{p}(u,\cG):= E_{p,q,0}(u,\cG) \quad \text{and} \quad \cE_p(\mu,\cG) = \cE_{p,q,0}(\mu, \cG).
\eeq
 %
%
%
%
It is well known that the so called $L^2$-critical exponent $p=6$ plays an important role, distinguishing the subcritical case $p<6$, where the functional $E_p$ is bounded from below in $H^1_{\mu}(\mathcal{G})$ for every $\mu>0$, from the supercritical case $p>6$, where the infimum of $E_p$ in $H^1_{\mu}(\mathcal{G})$ is $-\infty$ for every $\mu$; this phenomenon is strictly related to the Gagliardo-Nirenberg inequality, and is typical of one-dimensional problems with $L^2$-constraints. In higher dimensions $d \ge 2$, the $L^2$-critical exponent is $2+4/d$. For $p=6$, we have instead that $E_p$ is bounded from below on $H^1_\mu(\cG)$ only for masses $\mu$ smaller than a critical threshold $ \tilde \mu_{\cG}$, depending on $\cG$ through suitable topological properties: for instance, in the case of noncompact graphs with finitely many edges, it is shown in \cite{critical} that
\beq
\label{critical mass combined}
    \tilde{\mu}_{\mathcal{G}}:=\begin{cases}
    \mu_{\mathbb{R}^+}&=\frac{\sqrt{3}}{4}\pi \quad \text{ if $\mathcal{G}$ contains a terminal point,} \\
        \mu_{\mathbb{R}}&=\frac{\sqrt{3}}{2}\pi \quad \text{ if $\mathcal{G}$ does not contain any terminal point;}
    \end{cases}
\end{equation}
we recall that a terminal point is a vertex of degree $1$ (namely a vertex connected with a single edge), and a terminal edge is an edge with a terminal point. The values $\mu_{\R^+}$ and $\mu_{\R}$ are the critical thresholds for the half-line and for the real line, respectively. Furthermore, the infimum $\cE_6(\mu,\cG)$ is not necessarily achieved, even when $\cE_6(\mu,\cG)>-\infty$ (see \cite{critical} again).

Coming back to the periodic setting, the $1$-periodic homogeneous case was investigated in \cite{periodic}, and the $2$-periodic case in \cite{square_grid} and \cite{honeycomb} for the square and the honeycomb grids, respectively. In \cite{periodic}, it was shown that on $1$-periodic graphs the scenario is somehow similar to the one for graphs with finitely many edges: ground state of mass $\mu$ always exists in the subcritical regime $p<6$, while in the critical case the functional $E_6$ can have no minimizers even when it is bounded from below, due to lack of compactness; remarkably, for certain classes of graphs (such as graphs with a terminal point), the infimum $\cE_6(\mu, \cG)$ is never attained, regardless of the value of $\mu$ (see Theorem \ref{1_periodic_critical} below).

The situation is substantially more subtle for the problem on the square or the honeycomb grids \cite{honeycomb, square_grid}. The local $1$-dimensional nature of the single edges interacts with the global $2$-dimensional nature of the graph, due to the $2$-periodicity: the idea is that, since the metric graph is composed of a single compact core copied along two non-parallel directions, the Gagliardo-Nirenberg inequality can be proved both in the one-dimensional and in the two-dimensional versions, leading to a hybrid behavior. This phenomenon is called ``dimensional crossover", and has been rigorously established on the particular graphs considered in \cite{honeycomb, square_grid}. As a result, in such cases the subcritical regime, in which the ground state exists for every value of $\mu$, occurs only for $p \in (2,4)$ (note that $4$ is the two-dimensional critical exponent); for $p \in [4,6)$, although $\mathcal{E}_p(\cdot,\mathcal{G}) > -\infty$ for every $\mu > 0$ (as a consequence of the one-dimensional Gagliardo-Nirenberg inequality), the infimum is attained only for sufficiently large masses (as a consequence of the two-dimensional Gagliardo-Nirenberg inequality). As for the $L^2$-critical case $p=6$, phenomena similar to those already observed in the $1$-periodic setting arise.

Regarding the existence of ground states for the NLS energy with combined nonlinearity, namely for $\mathcal{E}_{6,q,\a}(\mu,\mathcal{G})$ with $\alpha \in \mathbb{R}$, $q \in (2,6)$, and $\mathcal{G}$ noncompact with a finite number of edges and vertices, the problem was investigated in \cite{combined}. The idea was to understand how the addition of a subcritical perturbation term $\alpha \|u\|_{L^q(\cG)}^q/q$ affects the existence of ground states for the associated critical functional $E_6(\cdot\,,\cG)$, in the spirit of the Brezis-Nirenberg problem; in this perspective, we note that, if $\mathcal{G}$ is noncompact with a finite number of edges and vertices, then the problem associated with $\mathcal{E}_{p,q,\alpha}(\mu,\mathcal{G})$ would be purely subcritical for $2<q<p<6$, and therefore very similar to the homogeneous subcritical case; for this reason, in \cite{combined} the analysis was restricted to the case $p=6$, in contrast with the present study, where the dimensional crossover makes other values of $p$ of interest. It follows from \cite{combined} that two markedly different scenarios arise, depending on whether the perturbation is focusing ($\alpha>0$) or defocusing ($\alpha<0$). In the focusing case, critical and subcritical effects are combined in the following way: as in the critical case, there exists the critical threshold $\tilde \mu_{\cG}$ for the mass, above which the ground state energy level is  $- \infty$. For masses below $\tilde \mu_{\cG}$, ground states may or may not exist, and subcritical methods can often be adapted to answer this question regardless of the precise value of $\alpha>0$ (we refer to Theorem 1.1, Corollaries 1.2 and 1.3, Propositions 1.4 and 1.5 in \cite{combined} for more details). Instead, the defocusing case $\alpha<0$ is somehow less favorable for existence of ground states and is more reminiscent of the purely critical case. There exist entire classes of graphs for which the ground state level is never attained and, even when the homogeneous problem admits a ground state, the inhomogeneous one retains this property only for sufficiently small absolute values of the perturbation coefficient $|\alpha|$ (see Theorems 1.6 and 1.7 in \cite{combined}).

\medskip

The main goal of this work is to analyze the case of combined nonlinearities on $1$ and $2$-periodic graphs, with particular emphasis on understanding the role played by the dimensional crossover. In doing so, we also extend the results obtained for the homogeneous $2$-periodic problem (which are currently available only for the square and hexagonal grids) to a very general class of $2$-periodic graphs. We point out that we shall always focus on the case $2<q<p \le 6$. The case when $p>6$ is not interesting from the point of view of existence of ground states, since it is easy to check that the ground state energy level is always $-\infty$.
%

\medskip

\noindent \textbf{Main results in the focusing case $\alpha>0$.} 
In what follows, we always assume that $2<q<p \le 6$, without explicitly stating it. The first statement describe the lower boundedness of the functional $E_{p,q,\alpha}(\cdot\,,\cG)$ on $H^1_\mu(\cG)$. 

\begin{proposition}
\label{(un)bounded}
Let $\mathcal{G}$ be a $1$ or $2$-periodic graph, and $\alpha>0$. If either $p<6$ and $\mu>0$, or $p=6$ and $\mu \in (0, \tilde{\mu}_{\mathcal{G}})$, then $E_{p,q,\alpha}(\cdot\,,\cG)$ is bounded from below and coercive in $H^1_{\mu}(\mathcal{G})$. Otherwise, if $p=6$ and $\mu\geq \tilde{\mu}_{\mathcal{G}}$, we have that $\mathcal{E}_{p,q,\alpha}(\mu, \mathcal{G})=-\infty$.
\end{proposition}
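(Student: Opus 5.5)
The plan is to compare $E_{p,q,\alpha}$ with the homogeneous energies through the one-dimensional Gagliardo--Nirenberg inequalities, which hold on any noncompact graph without terminal points and, after adapting the constants, on periodic graphs in general. The first step is to recall that on a $1$ or $2$-periodic graph $\cG$ we have, for every $r\in(2,6]$ and $u\in H^1(\cG)$,
\[
\|u\|_{L^r(\cG)}^r \le C_r\, \|u\|_{L^2(\cG)}^{\frac{r}{2}+1}\,\|u'\|_{L^2(\cG)}^{\frac r2-1},
\]
and, in the critical case $r=6$, the sharp form $\|u\|_{L^6}^6 \le 3\,(\mu/\tilde\mu_{\cG})^2\|u'\|_{L^2}^2$ for $u\in H^1_\mu(\cG)$. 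Here $\tilde\mu_{\cG}$ is the critical mass for the periodic graph, as defined via the homogeneous critical problem. The inequality should follow from the homogeneous theory: $\cE_6(\mu,\cG)>-\infty$ exactly for $\mu\le\tilde\mu_{\cG}$, and by scaling $\cE_6(\mu,\cG)=0$ there. The one-dimensional inequality on a periodic graph follows by covering $\cG$ with copies of the compact cell and using the $L^\infty$ bound $\|u\|_\infty^2\le C\|u\|_2(\|u'\|_2+\|u\|_2)$. The lower-order term is then removed by the standard argument on graphs with infinitely many cells, where the Sobolev constant is uniform.

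\textbf{Bounded case.} If $p<6$ and $\mu>0$, the Gagliardo--Nirenberg inequality gives
\[
E_{p,q,\alpha}(u,\cG)\ge \tfrac12\|u'\|_2^2 - C\mu^{\frac{p+2}{4}}\|u'\|_2^{\frac{p-2}{2}} - C\alpha\mu^{\frac{q+2}{4}}\|u'\|_2^{\frac{q-2}{2}}.
\]
Both exponents are strictly less than $2$, so the right-hand side tends to $+\infty$ as $\|u'\|_2\to\infty$. This yields lower boundedness and coercivity, since the $L^2$ norm is fixed. If $p=6$ and $\mu<\tilde\mu_{\cG}$, the sharp critical inequality gives $\tfrac12\|u'\|^2-\tfrac16\|u\|_6^6\ge \tfrac12\bigl(1-(\mu/\tilde\mu_{\cG})^2\bigr)\|u'\|_2^2$, a positive multiple of $\|u'\|_2^2$. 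The $q$-term is again sublinear in $\|u'\|_2^2$, so the same conclusion holds.

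\textbf{Unbounded case.} For $p=6$ and $\mu\ge\tilde\mu_{\cG}$, I would first show $\cE_6(\mu,\cG)=-\infty$ when $\mu>\tilde\mu_{\cG}$. Since $\alpha>0$ gives $E_{6,q,\alpha}\le E_6$, this immediately yields $\cE_{6,q,\alpha}(\mu,\cG)=-\infty$. For the borderline mass $\mu=\tilde\mu_{\cG}$, take a sequence $u_n\in H^1_\mu$ with $E_6(u_n)\le 0$ and $\|u_n'\|_2\to\infty$. One constructs it by concentrating a rescaled soliton (of the half-line or the line, according to the structure defining $\tilde\mu_{\cG}$) near a vertex or edge realizing the threshold. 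The estimate is
\[
E_{6,q,\alpha}(u_n)\le E_6(u_n)-\tfrac{\alpha}{q}\|u_n\|_q^q.
\]
For concentrating profiles, $\|u_n\|_q^q\sim c\,\lambda_n^{\frac{q-2}{2}}\to\infty$, where $\lambda_n$ is the scaling parameter. The main term must then be controlled so that $E_6(u_n)$ stays bounded above (ideally $\le o(\lambda_n^{(q-2)/2})$), which forces the whole energy to $-\infty$.

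\textbf{Main obstacle.} The key difficulty is the borderline case: building the concentrating competitors on a general periodic graph with the exact mass $\tilde\mu_{\cG}$ and an error in $E_6$ that is $o(\lambda_n^{(q-2)/2})$. I would cut off the soliton $\phi_\lambda(x)=\sqrt\lambda\,\phi(\lambda x)$ on an edge (or on a terminal edge, if any), adjust the mass by a small multiplicative constant, and use the exponential decay of $\phi$. This makes the errors exponentially small in $\lambda$, while the soliton has zero $E_6$ energy at its critical mass.
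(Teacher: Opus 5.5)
Your handling of $p<6$, of $p=6$ with $\mu>\tilde\mu_{\cG}$ (via $E_{6,q,\alpha}\le E_6$), and of the borderline mass $\mu=\tilde\mu_{\cG}$ is essentially what the paper does. For the borderline mass you use a cut-off, renormalized soliton or half-soliton concentrating on an edge, with exponentially small errors in $E_6$ against $\|u_\lambda\|_q^q\sim\lambda^{(q-2)/2}$. The gap is in the case $p=6$, $\mu<\tilde\mu_{\cG}$.

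There you use the ``sharp form'' $\|u\|_6^6\le 3(\mu/\tilde\mu_{\cG})^2\|u'\|_2^2$ on $H^1_\mu(\cG)$, and this is false in general. The optimal critical Gagliardo--Nirenberg constant only gives this inequality with $\mu_{6,\cG}=\sqrt{3/C_{6,\cG}}$ in place of $\tilde\mu_{\cG}$, and in general $\mu_{6,\cG}\le\tilde\mu_{\cG}$. When $\cG$ has no terminal point and violates $(H)$, one can have $\mu_{6,\cG}<\mu_\R=\tilde\mu_{\cG}$; such $1$-periodic examples are known. In that case Theorems~\ref{1_periodic_critical}(iii) and~\ref{teorema critico grafi 2-periodici}(iii) give $\cE_6(\mu,\cG)<0$ for $\mu\in(\mu_{6,\cG},\mu_\R]$. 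Any $u$ with $E_6(u)<0$ then violates your inequality.

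Your justification, that $\cE_6(\mu,\cG)=0$ for $\mu\le\tilde\mu_{\cG}$ ``by scaling'', fails because a graph has no mass-preserving dilations. The dichotomy $\cE_6\in\{0,-\infty\}$ holds on $\R$, not on $\cG$. You have conflated the threshold for lower boundedness, $\tilde\mu_{\cG}$, with the threshold for $\cE_6=0$, which is $\mu_{6,\cG}$.

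What is needed is an inequality with a lower-order remainder. This is the modified Gagliardo--Nirenberg inequality of Lemma~\ref{lem: mod GN}: for graphs without terminal points and $\mu\le\mu_\R$ there is $\theta_u\in[0,\mu]$ with
\[
\|u\|_6^6\le 3\Bigl(\frac{\mu-\theta_u}{\mu_\R}\Bigr)^2\|u'\|_2^2+C_{\cG}\,\theta_u^{1/2}.
\]
It yields $\frac12\|u'\|_2^2-\frac16\|u\|_6^6\ge\frac12\bigl(1-(\mu/\mu_\R)^2\bigr)\|u'\|_2^2-\frac{C_{\cG}}{6}\mu^{1/2}$. This gives coercivity and lower boundedness for $\mu<\mu_\R$ while still allowing negative energies. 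Together with your sublinear bound on the $q$-term, it closes this case.

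Your argument does work in two sub-cases. When $\cG$ has a terminal point, $\tilde\mu_{\cG}=\mu_{6,\cG}=\mu_{\R^+}$. When $\cG$ satisfies $(H)$, $\tilde\mu_{\cG}=\mu_{6,\cG}=\mu_\R$. In both, your plain critical inequality is correct. It breaks only in the remaining case.
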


Now we focus on existence of ground states in all the cases when $\cE_{p,q,\alpha}(\mu,\cG)>-\infty$. We start from the $1$-periodic setting. Similarly to what was already observed in \cite{combined}, it turns out that a focusing subcritical perturbation is favorable for existence. In particular, differently from the homogeneous energy $E_6$, the functional $E_{6,q,\alpha}$ never loses compactness in $H^1_{\mu}(\cG)$ when $\mu<\tilde{\mu}_{\mathcal{G}}$, and this allows us to prove the following:

\begin{thm}
\label{sottocritico_combined}
Let $\mathcal{G}$ be a $1$-periodic metric graph, $\alpha>0$, and either $p<6$ and $\mu>0$, or $p=6$ and $\mu \in (0,\tilde{\mu}_{\mathcal{G}})$. Then 
    \begin{equation*}
        -\infty<\mathcal{E}_{p,q,\alpha}(\mu, \mathcal{G})<0
    \end{equation*} 
    and there exists a ground state of mass $\mu$.
\end{thm}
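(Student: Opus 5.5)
Boundedness from below and coercivity come from Proposition \ref{(un)bounded}, so it remains to prove that the level is negative and that the infimum is attained.

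\textbf{Negativity.} For $1$-periodic $\cG$ the argument is a concentration/spreading construction along the periodic direction. Take a fixed $\varphi \in H^1_\mu(\cG)$, for instance constant on every edge of a periodicity cell, that spreads out over $n$ consecutive cells with a linear cut-off. Renormalize it to mass $\mu$. Then
\[
\|u_n'\|_2^2 \lesssim \frac{\mu}{n^2},\qquad \|u_n\|_q^q \sim \mu^{q/2}\, n^{1-q/2}.
\]
Since $q<6$ gives $1-q/2>-2$, and $\alpha>0$, the term $-\frac{\alpha}{q}\|u_n\|_q^q$ dominates the kinetic term for $n$ large. The term $-\frac1p\|u\|_p^p$ is nonpositive and can be discarded. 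Hence $E_{p,q,\alpha}(u_n,\cG)<0$ for large $n$, and so $\mathcal{E}_{p,q,\alpha}(\mu,\cG)<0$.

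\textbf{Existence.} Take a minimizing sequence $(u_n)$, which is bounded in $H^1$ by coercivity. Using the $\mathbb{Z}$-action of $\cG$, translate each $u_n$ by an integer number of cells so that $\|u_n\|_{L^\infty}$ is attained in a fixed compact cell $K$. Then $u_n \rightharpoonup u$ in $H^1$ and $u_n\to u$ locally uniformly. First we rule out vanishing. If $\|u_n\|_\infty\to0$, then $\|u_n\|_r^r\le \|u_n\|_\infty^{r-2}\mu\to0$ for $r=p,q$, so $\liminf E(u_n)\ge 0>\mathcal{E}$, a contradiction. Since the maximum sits in $K$, this gives $u\neq0$, with $m:=\|u\|_2^2\in(0,\mu]$.

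\textbf{Dichotomy.} Suppose $m<\mu$. Brezis--Lieb splitting applied to $v_n=u_n-u$ gives
\[
E(u_n)=E(u)+E(v_n)+o(1),\qquad \|v_n\|_2^2\to\mu-m.
\]
To exclude this we need the strict subadditivity
\[
\mathcal{E}(\mu)<\mathcal{E}(m)+\mathcal{E}(\mu-m).
\]
We obtain it from the scaling inequality $\mathcal{E}(\theta m)<\theta\,\mathcal{E}(m)$ for $\theta>1$ whenever $\mathcal{E}(m)<0$, which is strict because
\[
E(\sqrt\theta\, w)=\theta E(w)-\frac{\theta^{p/2}-\theta}{p}\|w\|_p^p-\alpha\,\frac{\theta^{q/2}-\theta}{q}\|w\|_q^q,
\]
and the last two terms are bounded away from zero along minimizing sequences by the non-vanishing argument. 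Negativity of $\mathcal{E}$ at every mass in $(0,\mu]$ follows from the first step. When $p=6$ the masses $m,\mu-m$ are below $\tilde\mu_{\cG}$, so the levels there are finite. Care is needed for the remainder $E(v_n)$: we use only $\liminf E(v_n)\ge\mathcal{E}(\|v_n\|_2^2)$ together with continuity of $\mathcal{E}$ in the mass, which follows from the scaling identity above. Hence $m=\mu$, the convergence is strong in $L^2$, hence in $L^p$ and $L^q$ by interpolation, and weak lower semicontinuity shows that $u$ is a ground state.

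\textbf{Main obstacle.} The delicate point is the critical case $p=6$. There the constants involve $\|w\|_6^6$, and a minimizing sequence could concentrate at a point, as in the homogeneous critical problem. Concentration, however, forces $E\ge0$ asymptotically when $\mu<\tilde\mu_{\cG}$, via the Gagliardo--Nirenberg inequality with sharp constant tied to $\tilde\mu_{\cG}$; this contradicts $\mathcal{E}<0$. I expect to need to make this precise through the same subadditivity and non-vanishing argument rather than by a separate concentration analysis.
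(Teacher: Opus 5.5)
Your proposal is correct and follows the paper's route. The paper builds the same kind of spread-out competitor, namely $\varepsilon_n\, d_{2n}/\max d_{2n}$, where $d_{2n}$ is the distance to the complement of $2n+1$ consecutive cells; this function grows linearly in the graph metric thanks to the choice of fundamental domain in Lemma \ref{fund_dom}, which yields exactly your bounds $\|u_n'\|_2^2\lesssim n^{-2}$ and $\|u_n\|_q^q\asymp n^{1-q/2}$. The paper then invokes Proposition \ref{negative_combined}, whose standard translation/Brezis--Lieb/strict-subadditivity proof you reconstruct, and your concern about concentration at $p=6$ is already settled by the coercivity for $\mu<\tilde\mu_{\cG}$ from Proposition \ref{(un)bounded}, which you use anyway.
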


\begin{rem}
We observe that, unlike the homogeneous case considered in \cite{periodic} and recalled in Theorems \ref{1_periodic_subcritical}, \ref{1_periodic_critical}, the focusing case with combined nonlinearities exhibits a subcritical behavior whenever $E_{p,q,\alpha}$ is bounded from below on $H^1_{\mu}(\cG)$. 
\end{rem}

When dealing with a $2$-periodic metric graph $\mathcal{G}$, the dimensional crossover also takes place for the functional $E_{p,q,\alpha}$. As a result, the purely subcritical case appears only when $q<4$. 
\begin{thm}
\label{sottocritico bidimensionale combined}
Let $\mathcal{G}$ be a $2$-periodic metric graph, $\alpha>0$, $q \in (2,4)$, $p>q$ and either $p<6$ and $\mu>0$, or $p=6$ and $\mu\in (0,\tilde{\mu}_{\mathcal{G}})$. Then 
\begin{equation*}
    -\infty < \mathcal{E}_{p,q,\alpha}(\mu, \mathcal{G})<0
\end{equation*}
and there exists a ground state of mass $\mu$.
\end{thm}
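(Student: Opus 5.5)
Lower boundedness and coercivity of $E_{p,q,\alpha}(\cdot\,,\cG)$ on $H^1_\mu(\cG)$ come from Proposition \ref{(un)bounded}, so $\mathcal{E}_{p,q,\alpha}(\mu,\cG)>-\infty$ and every minimizing sequence is bounded in $H^1(\cG)$. The proof therefore has two remaining parts: a strict negativity estimate obtained from a ``two-dimensional'' test function, and a concentration-compactness argument that uses the periodic structure.

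\textbf{Step 1: $\mathcal{E}_{p,q,\alpha}(\mu,\cG)<0$.} I will use spread-out test functions. Fix $\varphi\in C_c^\infty(\R^2)$ with $\|\varphi\|_{L^2(\R^2)}^2$ suitably normalized. Identify each periodicity cell with the lattice point $k=(k_1,k_2)\in\mathbb{Z}^2$, and define $u_\varepsilon$ on $\cG$ by interpolating the values $\varepsilon\varphi(\varepsilon x)$ along edges, for instance via a Lipschitz map from $\cG$ to $\R^2$ that is compatible with the periodicity. Then, up to constants that depend only on the cell, $\|u_\varepsilon\|_{L^r(\cG)}^r\sim \varepsilon^{r-2}$ for every $r\ge 2$ and $\|u_\varepsilon'\|_{L^2(\cG)}^2\sim\varepsilon^2$. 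After renormalizing the mass to $\mu$ (the correction factor tends to $1$), the energy reads
\[
E_{p,q,\alpha}(u_\varepsilon,\cG)= C_1\varepsilon^2 - C_2\varepsilon^{p-2}-\alpha C_3\varepsilon^{q-2}+o(\cdot).
\]
Since $q<4$, we have $q-2<2$, so the $\alpha$ term dominates as $\varepsilon\to0$ and the energy is negative. This is exactly where $q<4$ (two-dimensional subcriticality) enters.

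\textbf{Step 2: compactness.} Take a minimizing sequence $u_n$, which is bounded in $H^1$. First I exclude vanishing. If $\sup_{k}\|u_n\|_{L^\infty(K_k)}\to0$, where $K_k$ denotes the translates of the compact cell, then $\|u_n\|_{L^r}^r\le\|u_n\|_\infty^{r-2}\mu\to0$ for $r=p,q$. This gives $\liminf E\ge0$, contradicting Step 1. Hence there are cells $K_{k_n}$ with $\|u_n\|_{L^\infty(K_{k_n})}\ge c>0$. Using the graph automorphisms given by the periodicity, translate so that $k_n=0$; this preserves the energy and the mass. Extract a weak limit $u_n\rightharpoonup u\neq0$ in $H^1$, with local uniform convergence. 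Let $m=\|u\|_2^2\in(0,\mu]$.

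To exclude dichotomy I will use Brezis--Lieb splitting:
\[
E(u_n)=E(u)+E(u_n-u)+o(1),\qquad \mu=m+\|u_n-u\|_2^2+o(1).
\]
Combine this with the strict subadditivity-type inequality obtained from the scaling $\mathcal{E}(\theta m)<\theta\mathcal{E}(m)$ for $\theta>1$. That inequality follows because, for $v$ with $E(v)<0$, we have $E(\sqrt\theta v)<\theta E(v)$: the nonlinear terms scale with power $\theta^{r/2}>\theta$, and negativity guarantees that the nonlinear part is nonzero. From this the standard argument gives $E(v)\ge \frac{\|v\|_2^2}{\mu}\,\mathcal{E}(\mu)$ for masses below $\mu$, with strict inequality unless the mass equals $\mu$. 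Conclude that $m=\mu$ and that $u$ is a minimizer.

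\textbf{Main obstacle.} For $p=6$, the remainder $u_n-u$ has mass $\mu-m<\tilde\mu_\cG$, so I need $E_{6,q,\alpha}$ bounded below there and $\mathcal{E}(\mu-m)$ finite. This follows from Proposition \ref{(un)bounded}, but it requires care: the remainder could still concentrate and lose compactness, as happens in the purely critical case. Here the focusing term helps. We have $\mathcal{E}(\nu)<0$ for all $\nu<\tilde\mu_\cG$ by Step 1, while a concentrating remainder only contributes energy $\ge0$ by the Gagliardo--Nirenberg inequality with mass below the critical threshold. So concentration is also ruled out by the strict inequality above.
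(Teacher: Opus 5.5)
Your proof is correct and follows the paper's two-step strategy. First, a spread-out competitor with the two-dimensional scaling $\|u\|_r^r\sim\varepsilon^{r-2}$, $\|u'\|_2^2\lesssim\varepsilon^2$ makes the $\alpha$-term win precisely because $q<4$; the paper builds it intrinsically as the tent $d(x,\overline{B_{2n}^c})$ over a box of $\sim n^2$ cells, using Lemma~\ref{fund_dom} for the distance bounds, whereas you pull back a bump through an equivariant Lipschitz map $\cG\to\R^2$, which you should state explicitly (e.g.\ prescribed on vertex-orbit representatives and extended linearly along edges). Second, ``negative energy implies a ground state'' gives existence; the paper imports this as Proposition~\ref{negative_combined}, while you re-derive it via translation, Brezis--Lieb splitting and strict scaling.

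Your ``main obstacle'' paragraph is unnecessary. Coercivity already bounds minimizing sequences in $H^1$, so nothing can concentrate, and your Brezis--Lieb/strict-scaling argument closes the proof on its own, needing only $\mathcal{E}_{6,q,\alpha}(\nu)>-\infty$ for $\nu<\tilde\mu_{\cG}$. Also, the plain Gagliardo--Nirenberg inequality only gives nonnegative energy for masses up to $\mu_{6,\cG}$, not up to $\tilde\mu_{\cG}$.
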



When $q \in [4,6)$, a further critical mass $\mu_{p,q,\alpha,\cG}$ is introduced through Lemma \ref{massa critica esistenza}, and the following theorem holds.
\begin{thm}
\label{teorema 2 sopracritico combined}
Let $\mathcal{G}$ be a $2$-periodic metric graph, $\alpha>0$ and either $4 \leq q<p< 6$ and $\mu>0$, or $4 \leq q<p=6$ and $\mu \in (0,\tilde{\mu}_{\mathcal{G}})$. Then there exists a threshold $\mu_{p,q,\alpha,\cG} >0$, with $\mu_{6,q,\alpha,\cG} < \tilde{\mu}_{\mathcal{G}}$, such that the following alternatives occur:
\begin{enumerate}
\item if $\mu < \mu_{p,q,\alpha,\cG}$, then \begin{equation*}
        \mathcal{E}_{p,q,\alpha}(\mu, \mathcal{G})=0
\end{equation*} 
    and the ground state energy level of mass $\mu$ is not attained;
\item if $\mu > \mu_{p,q,\alpha, \cG}$, then 
    \begin{equation*}
        -\infty < \mathcal{E}_{p,q,\alpha}(\mu, \mathcal{G})<0
    \end{equation*}
    and there exists a ground state of mass $\mu$;
\item if $\mu = \mu_{p,q,\alpha,\cG}$, then 
    \begin{equation*}
     \mathcal{E}_{p,q,\alpha}(\mu_{p,q,\alpha,\cG}, \mathcal{G})=0
    \end{equation*}
    and, for $q>4$, there exists a ground state of mass $\mu_{p,q,\alpha,\cG}$.
\end{enumerate}
\end{thm}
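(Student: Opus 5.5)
The proof rests on two Gagliardo--Nirenberg inequalities valid on a $2$-periodic graph: the one-dimensional one
\[
\|u\|_{L^r(\cG)}^r \le C_r \|u\|_{L^2(\cG)}^{\frac{r}{2}+1}\|u'\|_{L^2(\cG)}^{\frac r2-1},
\]
and the two-dimensional one
\[
\|u\|_{L^r(\cG)}^r \le K_r \|u\|_{L^2(\cG)}^{2}\|u'\|_{L^2(\cG)}^{r-2}\qquad (r\ge 2),
\]
the latter being the source of the dimensional crossover; I take both as established earlier in the paper. I will use a monotonicity/threshold argument in the spirit of \cite{square_grid}, with the threshold given by Lemma \ref{massa critica esistenza}:
\[
\mu_{p,q,\alpha,\cG}:=\inf\{\mu>0:\ \cE_{p,q,\alpha}(\mu,\cG)<0\}.
\]

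\textbf{Step 1: basic properties of $\mu\mapsto\cE(\mu):=\cE_{p,q,\alpha}(\mu,\cG)$.} First, $\cE(\mu)\le 0$ for every admissible $\mu$: spread a fixed profile over $n\times n$ cells with $L^2$ normalization, so that the gradient and the $L^p$, $L^q$ norms all vanish as $n\to\infty$. Second, the scaling $u\mapsto\sqrt{\theta}\,u$ with $\theta>1$ gives $E(\sqrt\theta u)\le \theta E(u)$, with strict inequality whenever the nonlinear terms are nonzero. Hence $\cE(\theta\mu)\le\theta\,\cE(\mu)$, so $\cE$ is nonincreasing, and once $\cE(\mu_0)<0$ it stays strictly negative for all $\mu>\mu_0$. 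This shows the set in the definition is an interval $(\mu^*,\infty)$, or $(\mu^*,\tilde\mu_\cG)$ when $p=6$.

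\textbf{Step 2: $\mu^*>0$, and $\mu^*<\tilde\mu_\cG$ when $p=6$.} For small $\mu$ the two-dimensional inequality with $r=q\ge4$ and $r=p$ gives
\[
E(u)\ge \tfrac12\|u'\|_2^2-\tfrac{K_p}{p}\mu\|u'\|_2^{p-2}-\tfrac{\alpha K_q}{q}\mu\|u'\|_2^{q-2}.
\]
When $q=4$ the middle term needed for positivity is quadratic, so I will pair the two-dimensional bound for the $q$-term with the one-dimensional bound for large $\|u'\|_2$: a power $>2$ multiplied by a small mass is dominated for moderate gradients. Choosing the bounds regime by regime should yield $E\ge0$ for all $\mu$ small. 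I expect this to be delicate, and the cleanest route is to show that $\{E<0\}$ forces $\|u'\|_2$ to lie in a range where the combined estimate is contradictory. For the upper bound when $p=6$, take masses close to $\tilde\mu_\cG$: the critical term then nearly cancels the kinetic term on concentrated profiles, and the focusing $q$-term makes the energy negative. So $\mu^*<\tilde\mu_\cG$.

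\textbf{Step 3: existence for $\mu>\mu^*$, and the case $\mu=\mu^*$.} For $\mu>\mu^*$, take a minimizing sequence; it is bounded by Proposition \ref{(un)bounded}. Translate it by periodicity so that the sup norm is attained in a fixed cell; vanishing is ruled out because vanishing would force $\liminf E\ge0>\cE(\mu)$. Pass to the weak limit $u$ with mass $m\in(0,\mu]$. Brezis--Lieb splitting together with the strict subadditivity coming from Step 1 ($\cE(\mu)<\cE(m)+\cE(\mu-m)$ when $\cE(\mu)<0$) excludes $m<\mu$, so $u$ is a ground state. This compactness argument is the main obstacle of the theorem, especially in the case $p=6$, where concentration must also be excluded; that is handled by $\mu<\tilde\mu_\cG$ and the one-dimensional inequality.

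For $\mu<\mu^*$ we have $\cE(\mu)=0$. If a minimizer $u$ existed, then $E(\sqrt\theta u)<0$ for some $\theta>1$ with $\theta\mu<\mu^*$, a contradiction, so the level is not attained.

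At $\mu=\mu^*$, continuity gives $\cE(\mu^*)=0$. Take $\mu_n\downarrow\mu^*$ with ground states $u_n$, $E(u_n)<0$. When $q>4$, the two-dimensional inequality yields a lower bound on $\|u_n'\|_2$ (negative energy with small gradient is impossible, since both nonlinear terms are superquadratic in $\|u'\|_2$), so the sequence does not vanish. Its limit, after translation and by the same splitting argument, is a function of mass $\mu^*$ with energy $\le 0$, hence a ground state. For $q=4$ the lower bound degenerates, which is why the statement excludes that case.
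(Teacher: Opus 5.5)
Your proof is correct. Its skeleton matches the paper's: the threshold is the boundary of the set of masses with negative energy, compactness above it comes from the dichotomy/subadditivity mechanism of Proposition \ref{negative_combined}, and a separate non-vanishing argument at the threshold needs $q>4$. The machinery, however, is different.

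The paper works with the scale-free quotient $F_{p,q,\alpha}(u,\mu)=2E_{p,q,\alpha}(u)/\|u'\|_2^2$. It proves that $\mathcal{F}_{p,q,\alpha}$ is continuous and strictly decreasing (Lemmas \ref{continua}, \ref{decreasing}). It obtains the threshold as the unique zero of $\mathcal{F}_{p,q,\alpha}$, after checking $\mathcal{F}_{p,q,\alpha}(\mu_{p,\mathcal{G}})<0$ case by case. Non-attainment below the threshold follows from $E>0$ on all of $H^1_\mu$. At $\mu^*$ it uses a minimizing sequence for $F(\cdot,\mu^*)$ at fixed mass, and for $p=6$ the $H^1$-bound of that sequence relies on $\mu_{6,q,\alpha,\mathcal{G}}<\mu_{6,\mathcal{G}}$.

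You use only the strict scaling inequality $E(\sqrt\theta u)<\theta E(u)$. This single inequality gives monotonicity, the interval structure, $\mathcal{E}(\mu^*)=0$, non-attainment below $\mu^*$, and strict subadditivity. You then reach the threshold from above with negative-energy functions at masses $\mu_n\downarrow\mu^*$, which are bounded by the coercivity of Proposition \ref{(un)bounded} (uniformly, since $\mu^*<\tilde\mu_{\mathcal{G}}$). This bypasses Lemma \ref{continua} and the comparison with $\mu_{6,\mathcal{G}}$. On the other hand, it does not by itself give $\mu_{p,q,\alpha,\mathcal{G}}<\mu_{p,\mathcal{G}}$, which the paper reuses in Theorem \ref{int alpha}.

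Three points should be tightened.
\begin{enumerate}
\item In Step 2, your regime-by-regime use of the 1D and 2D inequalities is exactly what the inter-dimensional inequality \eqref{int GN} packages. It gives directly $E(u)\ge\frac12\|u'\|_2^2\bigl(1-(\mu/\mu_{p,\mathcal{G}})^{\frac{p-2}{2}}-\alpha(\mu/\mu_{q,\mathcal{G}})^{\frac{q-2}{2}}\bigr)$, with $\mu_{6,\mathcal{G}}$ from \eqref{critical mass} when $p=6$. So nothing here is delicate, and the quadratic $q=4$ term is the harmless one, not the obstacle.
\item The bound $\mu^*<\tilde\mu_{\mathcal{G}}$ for $p=6$ follows most cleanly from $\mathcal{E}_{6,q,\alpha}(\tilde\mu_{\mathcal{G}})=-\infty$ (Proposition \ref{(un)bounded}) together with continuity of $\theta\mapsto E(\sqrt\theta u)$ at $\theta=1$.
\item At the threshold, a lower bound on $\|u_n'\|_2$ alone does not exclude vanishing. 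You need to combine it with $E(u_n)<0$ to get $\frac1p\|u_n\|_p^p+\frac{\alpha}{q}\|u_n\|_q^q\ge c>0$. Then $\|u_n\|_r^r\le\|u_n\|_\infty^{r-2}\mu_n$ gives $\|u_n\|_\infty\ge c'>0$.
\end{enumerate}
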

The analysis of the case $q=4$, $\mu=\mu_{p,4,\alpha,\cG}$ represents an open problem. 

We emphasize that the interval of existence for the mass depends on $\alpha$. This represents a novel feature with respect to the results obtained in \cite{combined}, where, for $\alpha>0$, one always had existence or non-existence on mass intervals independent of the size of $\alpha$ itself. In this perspective, it is natural to study the behavior of $\mu_{p,q,\alpha, \cG}$ when $\alpha$ varies. 

\begin{theorem}\label{int alpha}
Under the assumptions of Theorem \ref{teorema 2 sopracritico combined}, let $\mu_{p,q,\alpha,\cG} >0$ as above, and let $\mu_{r,\cG}$ be the critical mass for the exponent $r \in [4,6]$, defined in \eqref{critical mass} and \eqref{critical mass p} below. If $q \in (4,6)$, then we have that
\[
\mu_{p,q,\alpha,\cG} < \min \left\{\mu_{p,\cG}, \ \left(\frac{1}{\alpha}\right)^{\frac{2}{q-2}} \mu_{q,\cG}\right\},
\]
while for $q=4$
\[
\mu_{p,4,\alpha,\cG} < \mu_{p,\cG} \quad \text{and} \quad \mu_{p,4,\alpha,\cG} \le \frac{\mu_{4,\cG}}{\alpha}.
\]
Moreover, we have that $\mu_{p,q,\alpha,\cG} \ge \overline{\mu}_{p,q,\alpha,\cG}$, where the latter is defined as the unique solution to
\[
1-\left(\frac{t }{\mu_{p,\cG}}\right)^{\frac{p-2}{2}}-\alpha \left(\frac{t}{\mu_{q,\cG}}\right)^{\frac{q-2}{2}} = 0.
\]
In particular, $\mu_{p,q,\alpha,\cG} \to 0^+$ as $\alpha \to +\infty$, and $\mu_{p,q,\alpha,\cG} \to \mu_{p,\cG}^-$ as $\alpha \to 0^+$.
\end{theorem}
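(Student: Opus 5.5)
We first pin down $\mu_{p,q,\alpha,\cG}$. We expect Lemma \ref{massa critica esistenza} to define it as
\[
\mu_{p,q,\alpha,\cG}=\inf\{\mu>0:\ \cE_{p,q,\alpha}(\mu,\cG)<0\}=\sup\{\mu>0:\ \cE_{p,q,\alpha}(\mu,\cG)=0\}.
\]
For a single exponent $r\in[4,6]$, the critical mass $\mu_{r,\cG}$ in \eqref{critical mass}--\eqref{critical mass p} is the analogous threshold for $E_r$. Equivalently, it is characterized through the sharp two-dimensional-type Gagliardo--Nirenberg constant,
\[
\|u\|_{L^r}^r\le \frac{r}{2}\Big(\frac{\|u\|_{L^2}^2}{\mu_{r,\cG}}\Big)^{\frac{r-2}{2}}\|u'\|_{L^2}^2 ,
\]
so that $\cE_r(\mu,\cG)=0$ exactly for $\mu\le\mu_{r,\cG}$. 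We will work with these characterizations and with the fact that the map $\mu\mapsto\cE_{p,q,\alpha}(\mu,\cG)$ is nonincreasing, which follows by adding small mass far away on the periodic graph.

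\textbf{Lower bound.} Plug the two Gagliardo--Nirenberg inequalities (exponents $p$ and $q$) into $E_{p,q,\alpha}$ for $u\in H^1_\mu(\cG)$. This gives
\[
E_{p,q,\alpha}(u,\cG)\ \ge\ \frac12\|u'\|_{L^2}^2\Big[1-\Big(\frac{\mu}{\mu_{p,\cG}}\Big)^{\frac{p-2}{2}}-\alpha\Big(\frac{\mu}{\mu_{q,\cG}}\Big)^{\frac{q-2}{2}}\Big].
\]
The bracket is strictly decreasing in $\mu$, equals $1$ at $0$, and tends to $-\infty$, so it has a unique zero $\overline{\mu}_{p,q,\alpha,\cG}$. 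For $\mu\le\overline\mu_{p,q,\alpha,\cG}$ the energy is nonnegative, hence $\cE_{p,q,\alpha}(\mu,\cG)=0$, which gives $\mu_{p,q,\alpha,\cG}\ge\overline\mu_{p,q,\alpha,\cG}$.

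\textbf{Upper bounds.} The non-strict bounds are immediate. Since $\alpha>0$, we have $E_{p,q,\alpha}\le E_p$, so $\mu>\mu_{p,\cG}$ gives $\cE_{p,q,\alpha}<0$; hence $\mu_{p,q,\alpha,\cG}\le\mu_{p,\cG}$. Likewise $E_{p,q,\alpha}\le \frac12\|u'\|^2-\frac{\alpha}{q}\|u\|_q^q$, and the rescaling $u=\alpha^{-1/(q-2)}v$ turns this into $\alpha^{-2/(q-2)}E_q(v)$ with mass $\alpha^{-2/(q-2)}\mu$. It follows that $\mu_{p,q,\alpha,\cG}\le \alpha^{-2/(q-2)}\mu_{q,\cG}$, which is exactly the $q=4$ statement.

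For strictness we use existence at the threshold. For $q>4$, Theorem \ref{teorema 2 sopracritico combined}(3) gives a ground state $u$ at $\mu^*=\mu_{p,q,\alpha,\cG}$ with $E_{p,q,\alpha}(u)=0$. Suppose $\mu^*=\mu_{p,\cG}$. Then
\[
E_p(u)=E_{p,q,\alpha}(u)+\frac{\alpha}{q}\|u\|_q^q>0,
\]
and after pushing the mass slightly above $\mu_{p,\cG}$ we look for a contradiction. The cleaner route works directly below: for $\mu<\mu_{p,\cG}$ close to it, we look for a function with $E_{p,q,\alpha}<0$. Take a maximizing sequence for the $L^p$ Gagliardo--Nirenberg quotient at mass $\mu_{p,\cG}$ with $\|u'\|$ normalized. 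Mass-scaling by a factor $1-\varepsilon$ costs $O(\varepsilon)\|u'\|^2$ in the $p$-part, while the $q$-term gains $c\,\|u\|_q^q>0$. We need $\|u_n\|_q^q$ to be bounded below relative to $\|u_n'\|^2$ along this sequence, so the key is to fix the normalization so that the sequence does not vanish. If an optimizer of the Gagliardo--Nirenberg quotient exists, or if the ground state at the threshold $\mu_{p,\cG}$ exists (as in the homogeneous theory recalled for $p>4$), we use that optimizer $w$: then $E_p(w)=0$ at mass $\mu_{p,\cG}$ and $E_{p,q,\alpha}(w)<0$. By continuity, $E_{p,q,\alpha}(\sqrt{1-\varepsilon}\,w)<0$ for small $\varepsilon$, so $\mu^*<\mu_{p,\cG}$.

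The bound $\mu^*<\alpha^{-2/(q-2)}\mu_{q,\cG}$ for $q\in(4,6)$ follows the same way, using the threshold optimizer for $E_q$ and the extra negative term $-\frac1p\|\cdot\|_p^p$. For $q=4$, the inequality $\mu^*<\mu_{p,\cG}$ uses only the $p$-optimizer and so holds as well. At $q=4$ the $L^4$ threshold optimizer may fail to exist, which explains why only the non-strict bound $\mu_{p,4,\alpha,\cG}\le\mu_{4,\cG}/\alpha$ is claimed there.

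\textbf{Main obstacle.} The step I expect to be hardest is producing these optimizers at the single-exponent thresholds $\mu_{p,\cG}$ (for $p\in(4,6]$) and $\mu_{q,\cG}$ (for $q>4$). This should be the homogeneous part of the paper: concentration-compactness on the 2-periodic graph, using the translation invariance of the cell.

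\textbf{Limits in $\alpha$.} Since $\alpha^{-2/(q-2)}\to0$ as $\alpha\to\infty$, the upper bound gives $\mu_{p,q,\alpha,\cG}\to0^+$; for $q=4$ the bound $\mu_{4,\cG}/\alpha$ gives the same. As $\alpha\to0^+$, the root $\overline\mu_{p,q,\alpha,\cG}$ tends to the root of $1-(t/\mu_{p,\cG})^{\frac{p-2}{2}}$, which is $\mu_{p,\cG}$. Together with $\mu_{p,q,\alpha,\cG}<\mu_{p,\cG}$, this gives $\mu_{p,q,\alpha,\cG}\to\mu_{p,\cG}^-$.
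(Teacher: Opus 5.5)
Your route is essentially the paper's. You use the same Gagliardo--Nirenberg lower bound defining $\overline{\mu}_{p,q,\alpha,\cG}$. You get strict upper bounds by evaluating $E_{p,q,\alpha}$ on a threshold ground state of the homogeneous problem, rescaled by $\alpha^{-1/(q-2)}$ in the $q$-case, and exploiting the extra negative term. For $q=4$ you give a non-strict bound: you rescale and invoke $\cE_4(\nu)<0$ for $\nu>\mu_{4,\cG}$, whereas the paper uses near-maximizers of $K_{4,\cG}$ directly; the two are equivalent. For exponents in $(4,6)$ the optimizers are not an obstacle: they are the ground states at $\mu_{p,\cG}$ and $\mu_{q,\cG}$ given by Theorem \ref{teorema 2-sopracritico grafi 2-periodici}(iii).

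The gap is the case $p=6$, which the hypotheses include. You prove $\mu_{6,q,\alpha,\cG}<\mu_{6,\cG}$ (for every $q\in[4,6)$) from a function $w$ of mass $\mu_{6,\cG}$ with $E_6(w)=0$, to be produced by concentration-compactness. Such a $w$ does not exist in general, so this step fails rather than being merely hard:
\begin{itemize}
\item by Theorem \ref{teorema critico grafi 2-periodici}(i)--(ii), the critical homogeneous level is never attained when $\cG$ satisfies $(H)$ (e.g. the square grid) or has a terminal edge;
\item when $(H)$ fails, there is no terminal edge and $\mu_{6,\cG}=\mu_{\R}$, its existence is open.
\end{itemize}
The missing observation is that in all these cases $\mu_{6,\cG}=\tilde\mu_{\cG}$. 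Proposition \ref{(un)bounded} then gives $\cE_{6,q,\alpha}(\tilde\mu_{\cG},\cG)=-\infty$, via concentrating rescaled solitons on an edge rather than an optimizer. Hence there is $u$ of mass $\tilde\mu_{\cG}$ with $E_{6,q,\alpha}(u)<0$, and your continuity step $E_{6,q,\alpha}(\sqrt{1-\varepsilon}\,u)<0$ yields $\mu_{6,q,\alpha,\cG}<\mu_{6,\cG}$. Equivalently, in these cases this is just the inequality $\mu_{6,q,\alpha,\cG}<\tilde\mu_{\cG}$ already contained in Theorem \ref{teorema 2 sopracritico combined}. Only in the remaining case ($(H)$ fails, no terminal edge, and $\mu_{6,\cG}<\mu_{\R}$) does a threshold ground state exist, by Theorem \ref{teorema critico grafi 2-periodici}(iii), and there your argument applies verbatim. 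This is precisely the case split in the paper's proof of Lemma \ref{massa critica esistenza}.
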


From a heuristic point of view, this suggests that increasingly larger perturbation parameters $\alpha$
correspond to correspondingly larger intervals of existence. On the other hand, if $\alpha \to 0^+$,
the situation tends to converge to that of the homogeneous problem.

\begin{rem}
In proving Theorems \ref{teorema 2 sopracritico combined} and \ref{sottocritico bidimensionale combined}, we will also use the known results for the homogeneous case $\alpha=0$ on $\cG$. At the moment, such results were only established when $\cG$ is the square \cite{square_grid} or honeycomb \cite{honeycomb} grids. Thus, as further results, we will extend the main theorems in \cite{square_grid, honeycomb} to general $2$-periodic graphs (see Definition \ref{def: periodic graph} and Theorems \ref{sottocritico_2_periodico} and \ref{teorema 2-sopracritico grafi 2-periodici} below). This study is of independent interest.
\end{rem}

\medskip

\noindent \textbf{Main results in the defocusing case $\alpha<0$.}  The following statement, analogous to \cite[Theorems 1.6 and 1.7(i)]{combined}, clarify the lower boundedness and the of loss of campactness of the functional $E_{p,q,\alpha}$. Again, we always suppose that $2<q<p \le 6$.

\begin{proposition}
\label{(un)bounded_defocusing}
Let $\mathcal{G}$ be a $1$ or $2$-periodic metric graph and $\alpha <0$. The following holds:
\begin{enumerate}
\item If $p<6$ and $\mu>0$, then 
\[
    -\infty<\mathcal{E}_{p,q,\alpha}(\mu,\cG)\leq 0,
\]
and $E_{p,q,\alpha}(\cdot\,,\cG)$ is coercive in $H^1_{\mu}(\cG)$. 
\item If $p=6$ and $\mu > \tilde{\mu}_{\mathcal{G}}$, then 
\begin{equation*}
    \mathcal{E}_{p,q,\alpha}(\mu,\cG)=-\infty.
\end{equation*}
\item If $p=6$ and $\mu \in (0, \tilde{\mu}_{\mathcal{G}}]$, then
\[
    -\infty<\mathcal{E}_{6,q,\alpha}(\mu,\cG)\leq 0.
\]
Moreover, if $\mu \in (0, \mu_{6,\mathcal{G}}]$, where $\mu_{6,\cG}$ is defined in \eqref{critical mass} below, then $\mathcal{E}_{6,q,\alpha}(\mu,\cG)=0$ and the ground state level is not achieved.
\end{enumerate}
\end{proposition}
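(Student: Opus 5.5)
The three items all follow from comparison with the homogeneous functional $E_p$. For $\alpha<0$ we have the pointwise identity
\[
E_{p,q,\alpha}(u,\cG)=E_p(u,\cG)+\frac{|\alpha|}{q}\|u\|_{L^q(\cG)}^q \ \ge\ E_p(u,\cG),
\]
so lower bounds and coercivity pass from $E_p$ to $E_{p,q,\alpha}$, and $\cE_{p,q,\alpha}(\mu,\cG)\ge \cE_p(\mu,\cG)$.

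\emph{Item (1).} Take $p<6$. The one-dimensional Gagliardo--Nirenberg inequality holds on any noncompact graph, in particular on periodic ones:
\[
\|u\|_{L^p(\cG)}^p\le C\mu^{\frac{p+2}{4}}\|u'\|_{L^2(\cG)}^{\frac{p-2}{2}}, \qquad \tfrac{p-2}{2}<2 .
\]
Hence $E_p(u,\cG)\ge \frac12\|u'\|_2^2-C\|u'\|_2^{(p-2)/2}$, which is coercive and bounded from below on $H^1_\mu(\cG)$, and by the identity above so is $E_{p,q,\alpha}$.

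For the upper bound $\cE_{p,q,\alpha}(\mu,\cG)\le 0$ I construct spreading functions. Fix a path of edges of $\cG$ with $N$ consecutive periodicity cells. On it, put a profile with a flat plateau of height $\sim(\mu/N)^{1/2}$ and linear cut-offs of slope $\sim (\mu/N)^{1/2}/L$, and normalise the mass to $\mu$. Along this family,
\[
\|u'\|_2^2\to0,\qquad \|u\|_{L^r}^r\lesssim \mu^{r/2}N^{1-r/2}\to 0 \quad (r>2).
\]
So $E_{p,q,\alpha}\to 0$, which gives $\cE_{p,q,\alpha}(\mu,\cG)\le 0$.

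\emph{Item (2).} Take $p=6$ and $\mu>\tilde\mu_\cG$. I would reuse the test functions from Proposition \ref{(un)bounded}, i.e.\ from the homogeneous critical theory: these are concentrated soliton-type profiles, possibly on a terminal edge, of mass $\mu$ scaled so that
\[
\|u_\lambda'\|_2^2\sim\lambda^2,\qquad \|u_\lambda\|_6^6\sim\lambda^2,\qquad E_6(u_\lambda,\cG)\le -c\lambda^2,\quad c>0.
\]
On the same family the $q$-term only scales like $\lambda^{(q-2)/2}$, and $(q-2)/2<2$. Therefore
\[
E_{6,q,\alpha}(u_\lambda,\cG)\le -c\lambda^2+C\lambda^{(q-2)/2}\to-\infty .
\]
The only thing to check is that the construction in the proof of Proposition \ref{(un)bounded} yields a strictly negative multiple of $\lambda^2$ once $\mu>\tilde\mu_\cG$. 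This is standard: one compares with the half-line or line soliton, whose critical masses are the values in \eqref{critical mass combined}.

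\emph{Item (3).} For $\mu\le\tilde\mu_\cG$ I use the modified Gagliardo--Nirenberg inequality that defines $\tilde\mu_\cG$ for periodic graphs, recalled from \cite{periodic, square_grid} (or established later in the paper):
\[
\|u\|_6^6\le 3\left(\tfrac{\mu}{\tilde\mu_\cG}\right)^2\|u'\|_2^2+C\mu^{\beta} \qquad \text{for some } \beta>0 .
\]
This gives $E_6\ge -C$, hence $E_{6,q,\alpha}\ge -C$ and $\cE_{6,q,\alpha}(\mu,\cG)>-\infty$. The upper bound $\le 0$ comes from the same spreading functions as in item (1).

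Now let $\mu\le\mu_{6,\cG}$. By the definition \eqref{critical mass} of $\mu_{6,\cG}$ as the threshold below which $E_6\ge0$, we have $E_6(u,\cG)\ge 0$ for every $u\in H^1_\mu(\cG)$. Then, for every such $u\neq0$,
\[
E_{6,q,\alpha}(u,\cG)\ge E_6(u,\cG)+\frac{|\alpha|}{q}\|u\|_q^q>0 ,
\]
so $\cE_{6,q,\alpha}(\mu,\cG)=0$ and the infimum is not attained. The main obstacle is pinning down exactly which inequality characterises $\mu_{6,\cG}$, namely whether $E_6\ge0$ holds up to and including $\mu=\mu_{6,\cG}$. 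I would derive this from the definition in \eqref{critical mass} via a scaling/continuity argument in $\mu$: if $E_6(u)<0$ for some $u$ of mass $\mu_{6,\cG}$, then rescaling the mass slightly down keeps the energy negative, contradicting the definition.
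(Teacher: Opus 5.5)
The step that fails as written is the spreading construction you use for $\cE_{p,q,\alpha}(\mu,\cG)\le 0$. A profile placed on a path of edges is not in $H^1(\cG)$. At each vertex of the path where an edge off the path is attached, continuity forces the plateau value $h\sim(\mu/N)^{1/2}$ onto that edge, so the function must be extended off the path. Your counts $\|u'\|_2^2\to0$ and $\|u\|_r^r\lesssim\mu^{r/2}N^{1-r/2}$ ignore this extension.

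On a $2$-periodic graph the extension is not a negligible boundary effect. A row of $N$ cells is adjacent to order $N$ other cells, so with cut-offs of slope $h/L$ the decay region has length of order $NL$. It therefore contributes kinetic energy of order $NL(h/L)^2\sim\mu/L$, which does not vanish as $N\to\infty$ with $L$ fixed. You need to spread in both directions (or let $L\to\infty$ together with $N$), and it is convenient to define the function through the graph distance so that continuity is automatic.

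This is what the paper does. It reuses $u_n=\varepsilon_n\, d_{2n}/\max_{B_{2n}} d_{2n}$ on the balls $B_{2n}$ from the proofs of Theorems~\ref{sottocritico_combined} and~\ref{sottocritico bidimensionale combined}. For these, $\|u_n'\|_2^2\lesssim n^{-2}$ and $\|u_n\|_r^r\to0$ for $r>2$, hence $E_{p,q,\alpha}(u_n)\to0$. On a $1$-periodic graph your idea does work with fixed $L$, provided the plateau is placed on the whole union of the $N$ cells rather than on a path. That set meets the rest of $\cG$ in a number of vertices bounded independently of $N$.

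The rest of your proposal follows the paper's route. Lower bounds and coercivity come from $E_{p,q,\alpha}(u)=E_p(u)+\frac{|\alpha|}{q}\|u\|_q^q$ together with \eqref{GN} or Lemma~\ref{lem: mod GN}. The final claim comes from $E_{6,q,\alpha}(u)>E_6(u)\ge0$ for $u\neq0$. Two small corrections:

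\emph{Item (2).} The proof of Proposition~\ref{(un)bounded} does not supply concentrating profiles when $\mu>\mu_\R$; there the paper uses $E_{p,q,\alpha}\le E_p$, which is false for $\alpha<0$. You must build them directly. Take a compactly supported $v\in H^1_\mu(\R)$, or $v\in H^1_\mu(\R^+)$ placed at a terminal vertex, with $E_6(v)<0$, and set $v_\lambda=\sqrt{\lambda}\,v(\lambda\,\cdot)$ inside one edge. Then $E_6(v_\lambda,\cG)=\lambda^2E_6(v)$, while the $q$-term grows only like $\lambda^{(q-2)/2}$. This is what the paper indicates.

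\emph{Item (3).} The ``main obstacle'' you identify does not exist. The critical mass is defined in \eqref{critical mass} through the optimal Gagliardo--Nirenberg constant, $\mu_{6,\cG}=\sqrt{3/C_{6,\cG}}$. Hence \eqref{GN} gives directly $E_6(u)\ge\frac12\bigl(1-(\mu/\mu_{6,\cG})^2\bigr)\|u'\|_2^2\ge0$ on $H^1_\mu(\cG)$ for every $\mu\le\mu_{6,\cG}$, endpoint included. No scaling or continuity argument is needed; your argument in fact presupposes the threshold characterisation, which is itself a consequence of this inequality.
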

This implies that the existence of ground states is possible either for $p \in (2,6)$, or for $p=6$ with $\mu \in (\mu_{6,\mathcal{G}},\tilde \mu_{\cG}]$ (in analogy with \cite{combined}); we point out that this interval can empty, and in such case we can immediately conclude that ground states never exist for $p=6$. This is the case when $\cG$ has a terminal point, as $\tilde \mu_{\cG}=\mu_{\R^+}=\mu_{6,\cG}$ (for the latter equality, we refer to \cite{critical}). Moreover, $(\mu_{6,\mathcal{G}},\tilde \mu_{\cG}]$ can be empty also when $\cG$ has no terminal point, but some topological condition is verified. For instance, if assumption ($H$) below is verified (namely if the graph \emph{has a cycle covering}) then $\tilde \mu_{\cG}=\mu_{\R}=\mu_{6,\cG}$, see \cite{critical} again. 

%
%

In \cite[Theorem 1.7 (ii)]{combined}, in studying the interesting case $p=6$ with $\mu \in (\mu_{6,\mathcal{G}},\mu_{\mathbb{R}}]$, it is proved the existence of a critical value $\overline{\alpha}=\overline{\alpha}(\mathcal{G}, q, \mu)<0$ such that: 
\begin{itemize}
    \item[($i$)] if $\alpha <\overline{\alpha}$, then $E_{6,q,\alpha}(\cdot\,,\cG)$ loses compactness in $H^1_{\mu}(\cG)$, and the ground state level is not achieved;
    \item[($ii$)] if $\overline{\alpha}<\alpha<0$, then there exists a ground state of negative energy.
\end{itemize}
A similar scenario arise in the present setting. We define $\overline{\alpha}= \overline{\alpha}(\cG, p,q,\mu)$ as
 \begin{equation}
    \label{alpha_definition}
        \overline{\alpha}:=\inf\{\alpha<0 \text{ such that }\mathcal{E}_{p,q,\alpha}(\mu,\cG)<0\}=\sup\{\alpha < 0 \text{ such that } \mathcal{E}_{p,q,\alpha}(\mu,\cG)=0\},
    \end{equation}
being $\overline{\alpha}:=-\infty$ if $\mathcal{E}_{p,q,\alpha}(\mu,\cG)<0$ for every $\alpha<0$, and $\overline{\alpha}:=0$ if $\mathcal{E}_{p,q,\alpha}(\mu,\cG)=0$ for every $\alpha<0$.

\begin{thm}
\label{defocusing_theorem}
    Let $\mathcal{G}$ be a $1$ or $2$-periodic metric graph, and either $p<6$ with $\mu>0$, or $p=6$ with $\mu \in (\mu_{6,\cG}, \mu_{\R}]$. Let also $\bar \alpha$ be defined by \eqref{alpha_definition}. Then:
    \begin{enumerate}
\item $\overline{\alpha}=0$ if and only if $\mathcal{E}_p(\mu,\cG)=0$;
\item if $\overline{\alpha}<0$, then: 
\begin{itemize}
    \item[($i$)] for $\alpha <\overline{\alpha}$, then $E_{p,q,\alpha}(\cdot\,,\cG)$ has no ground state on $H^1_\mu(\cG)$;
    \item[($ii$)] for $\overline{\alpha}<\alpha<0$, then there exists a ground state for $E_{p,q,\alpha}(\cdot\,,\cG)$ on $H^1_\mu(\cG)$.
\end{itemize}
\item It results that $\overline{\alpha}>-\infty$, and moreover there exists a ground state of $E_{p,q,\overline{\alpha}}(\cdot\,,\cG)$ in $H^1_{\mu}$.    \end{enumerate}
\end{thm}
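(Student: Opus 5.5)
The plan rests on three facts. First, $\alpha\mapsto E_{p,q,\alpha}(u,\cG)$ is affine and nonincreasing for each fixed $u$. Hence $\alpha\mapsto\mathcal{E}_{p,q,\alpha}(\mu,\cG)$ is concave and nonincreasing, and it is continuous wherever it is finite. Second, $\mathcal{E}_{p,q,\alpha}(\mu,\cG)\le 0$ for every $\alpha<0$, by Proposition \ref{(un)bounded_defocusing}. Third, every level $c<0$ is a compactness level: a minimizing sequence with negative energy level cannot vanish. If it vanished, $\|u_n\|_{L^\infty}\to0$, so $\|u_n\|_p,\|u_n\|_q\to0$ and $\liminf E\ge0$.

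By periodicity we translate the sequence by cells so that it does not run off to infinity. We then apply a concentration-compactness/Brezis--Lieb splitting $u_n=u+v_n$ with $\|u\|_2^2=m\in(0,\mu]$. The subadditivity-type inequality $\mathcal{E}(\theta m)<\theta\mathcal{E}(m)$ for $\theta>1$ when $\mathcal{E}(m)<0$ follows from the scaling $u\mapsto\sqrt\theta u$, since the $p$-term is focusing and dominates. This rules out dichotomy, so $m=\mu$ and a ground state exists. I expect this compactness lemma to be stated or provable by the same argument used for Theorems \ref{sottocritico_combined}--\ref{teorema 2 sopracritico combined}. The only new point is the defocusing $q$-term, and it is handled by the scaling as long as $\theta^{p/2}>\theta^{q/2}$. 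For $p=6$ one also needs $\mu\le\mu_{\R}\le\tilde\mu_\cG$, so that the problem is bounded below and coercive by Proposition \ref{(un)bounded_defocusing}.

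\emph{Part (1).} If $\mathcal{E}_p(\mu,\cG)<0$, take $u$ with $E_p(u)<0$. Then $E_{p,q,\alpha}(u)\to E_p(u)<0$ as $\alpha\to0^-$, so $\bar\alpha<0$. Conversely, if $\mathcal{E}_p(\mu,\cG)=0$, then $\mathcal{E}_{p,q,\alpha}\ge\mathcal{E}_p=0$ for all $\alpha<0$, since the $q$-term is then nonnegative. Thus $\bar\alpha=0$.

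\emph{Part (2).} For (ii), when $\bar\alpha<\alpha<0$ monotonicity gives $\mathcal{E}_{p,q,\alpha}<0$, and the compactness principle yields a ground state. For (i), when $\alpha<\bar\alpha$ the level is $0$, and I argue by contradiction. Suppose $u$ attains $0$ at $\alpha$. Since $\|u\|_q>0$, for any $\alpha'\in(\alpha,\bar\alpha)$ we get $E_{p,q,\alpha'}(u)<E_{p,q,\alpha}(u)=0$. This contradicts $\mathcal{E}_{p,q,\alpha'}=0$.

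\emph{Part (3).} This is the main obstacle. To prove $\bar\alpha>-\infty$, I must show $\mathcal{E}_{p,q,\alpha}(\mu,\cG)=0$ for $|\alpha|$ large. My first attempt uses an interpolation of Gagliardo--Nirenberg type valid on periodic graphs (the one-dimensional version): $\|u\|_p^p\le C\|u\|_q^{\,a}\|u'\|_2^{\,b}\|u\|_2^{\,c}$, with exponents chosen so that Young's inequality gives $\frac1p\|u\|_p^p\le\frac12\|u'\|_2^2+\frac{|\alpha|}{q}\|u\|_q^q$ once $|\alpha|\ge C(\mu)$. When $p=6$ and $\mu$ is close to $\mu_\R$ the derivative coefficient is tight, so I would split further. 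One could use $\|u\|_6^6\le\|u\|_\infty^{6-q}\|u\|_q^q$ together with $\|u\|_\infty^2\le\|u\|_2\|u'\|_2$, with care. Alternatively, argue by contradiction: take $\alpha_n\to-\infty$ and $u_n$ with $E_{p,q,\alpha_n}(u_n)<0$, normalize, and show the sequence must concentrate like a line soliton at the critical mass. That is impossible for $\mu\le\mu_\R$ on a periodic graph, given the assumption $\mu>\mu_{6,\cG}$ and the behaviour of the sharp constant.

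Finally, existence at $\bar\alpha$ uses concavity. Take $\alpha_n\downarrow\bar\alpha$ and ground states $u_n$ at level $\mathcal{E}_{p,q,\alpha_n}<0$. These are bounded in $H^1$ by coercivity. Rule out vanishing: the Pohozaev/Nehari identity for negative-energy critical points, combined with $E_{p,q,\alpha_n}(u_n)<0$ and $\alpha_n$ bounded, gives $\|u_n\|_p^p\gtrsim\|u_n'\|_2^2\gtrsim$ a positive constant, uniformly in $n$. After translation, $u_n\rightharpoonup u\ne0$. By continuity of the level, $E_{p,q,\bar\alpha}$ along $u_n$ tends to $0=\mathcal{E}_{p,q,\bar\alpha}$. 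Brezis--Lieb then gives $E(u)+E(v_n)\to0$ with both terms $\ge o(1)$ in their respective masses, so $E_{p,q,\bar\alpha}(u)\le 0$. If $\|u\|_2^2<\mu$, we add mass far away with a spread-out tail of negligible energy, which shows $u$ (suitably completed) attains the level $0$. To get exact mass $\mu$ I would instead use the scaling strict inequality, which gives $E(\sqrt{\mu/m}\,u)<0$ whenever $E(u)\le0$ and $u\ne0$. This contradicts the level being $0$ unless $m=\mu$.
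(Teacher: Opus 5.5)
Your Parts (1) and (2) are fine. Your argument for (2)(i) is correct and self-contained, whereas the paper only cites \cite{combined}: a minimizer $u$ at level $0$ for some $\alpha<\overline{\alpha}$ would satisfy $E_{p,q,\alpha'}(u)<0$ for every $\alpha'\in(\alpha,\overline{\alpha})$, where the level is $0$. For $p<6$ your interpolation in (3) also closes, e.g.\ $\|u\|_p^p\lesssim(\|u'\|_2^2)^{\frac{p-q}{6-q}}(\|u\|_q^q)^{\frac{6-p}{6-q}}$ on $H^1_\mu(\cG)$, which is equivalent to the paper's lower bound for $Q_{p,q}$.

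The genuine gap is $\overline{\alpha}>-\infty$ in (3) when $p=6$, which you flag but do not prove. Neither of your suggestions works as stated. A Young bound $\frac16\|u\|_6^6\le\frac12\|u'\|_2^2+K\|u\|_q^q$ obtained from non-sharp interpolation cannot close. Under concentration, $\|u\|_6^6$ and $\|u'\|_2^2$ scale alike while $\|u\|_q^q$ becomes negligible, and for $\mu>\mu_{6,\cG}$ the global constant already gives a coefficient larger than $\frac12$. The claim that the sequence ``must concentrate like a line soliton, impossible for $\mu\le\mu_\R$'' is not an argument either, since at $\mu=\mu_\R$ concentration is exactly borderline.

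The missing idea is to play the finiteness $\mathcal{E}_6(\mu,\cG)>-\infty$ for $\mu\le\mu_\R$ (Lemma \ref{lem: mod GN}; $\cG$ has no terminal point, as is implicit in the hypothesis) against two elementary bounds. Both hold for every $u\in H^1_\mu(\cG)$ with $E_{6,q,\alpha}(u)<0$. First, $\frac{|\alpha|}{q}\|u\|_q^q<\frac16\|u\|_6^6\le\frac16\|u\|_\infty^{6-q}\|u\|_q^q$ gives $\|u\|_\infty^{6-q}>6|\alpha|/q$. Second, $E_6(u)<0$ together with $\|u\|_\infty^4\lesssim\mu\|u'\|_2^2$ from \eqref{GN_infty} gives $\|u\|_q^q\ge\|u\|_6^6/\|u\|_\infty^{6-q}\gtrsim\|u\|_\infty^{q-2}$. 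Hence $\mathcal{E}_6(\mu,\cG)\le E_6(u)<-\frac{|\alpha|}{q}\|u\|_q^q\lesssim-|\alpha|^{\frac{4}{6-q}}$, which is impossible for $|\alpha|$ large. This is what the paper does through the quotient $Q_{6,q}$ of Lemma \ref{caratterizzazione alpha}.

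The same ingredient is missing from your existence argument at $\overline{\alpha}$. Its design is a legitimate alternative to the paper's minimization of $Q_{p,q}$: take ground states $u_n$ at $\alpha_n\downarrow\overline{\alpha}$, apply Brezis--Lieb, then use $E(\sqrt{\mu/m}\,u)<0$ whenever $E(u)\le0$, $u\ne0$, $m<\mu$. It explicitly handles the fact that the limiting level is $0$, whereas the paper appeals to Lemma \ref{dichotomy combined defocusing}, which is stated for negative levels.

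However, for $p=6$, $\mu=\mu_\R$, ``bounded in $H^1$ by coercivity'' is unjustified. Proposition \ref{coercivity defocusing critical} rests on a lower bound for $\theta_{u_n}$ in Lemma \ref{lem: mod GN} that degenerates as $\mathcal{E}_{6,q,\alpha_n}(\mu)\to0$. Uniformity comes instead from $E_6(u_n)<-\frac{|\alpha_n|}{q}\|u_n\|_q^q\le-c$, using the bounds above and $|\alpha_n|\ge|\overline{\alpha}|/2$; this forces $\theta_{u_n}\ge c'>0$.

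Non-vanishing needs no Pohozaev identity, which is not available in the usual form on $\cG$. It follows from $\|u_n\|_\infty^{p-q}>p|\alpha_n|/q$, i.e.\ from $|\alpha_n|$ being bounded away from $0$, not merely bounded. Finally, ``both terms $\ge o(1)$'' requires $\mathcal{E}_{p,q,\overline{\alpha}}(\nu)\ge0$ for $\nu\le\mu$. This follows from $\mathcal{E}_{p,q,\overline{\alpha}}(\theta\nu)\le\theta^{p/2}\mathcal{E}_{p,q,\overline{\alpha}}(\nu)$ with $\theta=\mu/\nu$.
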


We emphasize that, in \cite{combined}, the existence of ground states for $E_{p,q,\overline{\alpha}}$ in $H^1_{\mu}$ and the fact that $\overline{\alpha}>-\infty$ when $\mu=\mu_{\R}$ were left as open problems, while here we can treat also these cases. In fact, our method can be extended also to non-compact graphs with finitely many edges, giving a definite answer to such open questions in \cite{combined}:

\begin{thm}
\label{alpha critical 1}
    If $\mathcal{G}$ is a noncompact metric graph with a finite number of edges, $p=6$, and $ \mu \in (\mu_{6,\mathcal{G}},  \mu_{\mathbb{R}}]$, then $\overline{\alpha}>-\infty$, and there exists a ground state for $E_{6,q,\overline{\alpha}}$.
\end{thm}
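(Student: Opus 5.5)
We prove the two claims separately. Throughout, $\cG$ is noncompact with finitely many edges, $p=6$, $\mu\in(\mu_{6,\cG},\mu_\R]$, and we use the structure from \cite{combined}: since $\mu>\mu_{6,\cG}$, we have $\cE_6(\mu,\cG)<0$. The maps $\alpha\mapsto\cE_{6,q,\alpha}(\mu,\cG)$ are nondecreasing in $\alpha$ for $\alpha<0$, since the term $-\frac{\alpha}{q}\|u\|_q^q$ is nonnegative and grows as $\alpha$ decreases. They are also concave, being infima of affine functions of $\alpha$, hence continuous. Moreover $\cE_{6,q,\alpha}(\mu,\cG)\le 0$, because mass can spread along a half-line with vanishing energy. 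So $\bar\alpha$ is well defined, and $\cE_{6,q,\bar\alpha}(\mu,\cG)=0$ by continuity.

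\textbf{Step 1: $\bar\alpha>-\infty$.} We must show that $\cE_{6,q,\alpha}(\mu,\cG)=0$ for $|\alpha|$ large. Suppose instead that for a sequence $\alpha_n\to-\infty$ there are $u_n\in H^1_\mu(\cG)$ with $E_{6,q,\alpha_n}(u_n)<0$. Then
\[
\tfrac12\|u_n'\|_2^2+\tfrac{|\alpha_n|}{q}\|u_n\|_q^q<\tfrac16\|u_n\|_6^6 .
\]
Using $\mu\le\mu_\R$ and the modified Gagliardo--Nirenberg inequality on $\cG$ (the critical estimate in \cite{critical}), one has
\[
\|u\|_6^6\le 3\Big(\tfrac{\mu}{\mu_\R}\Big)^2\|u'\|_2^2+C\mu^{a}\|u\|_\infty^b ,
\]
or an analogous bound with a compactly supported remainder on the core, coming from the finite compact core. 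The key point is that on a graph with finitely many edges any excess of $\|u\|_6^6$ over $3\|u'\|_2^2$ is generated near the compact core $\mathcal K$. Splitting $u$ there, the excess is bounded by $C\|u\|_{L^\infty(\mathcal K)}^{r}$. That quantity is in turn controlled by interpolating $\|u\|_{L^q}$ against $\|u'\|_2$, which is penalized by $|\alpha_n|\|u\|_q^q$. Concretely, we aim to show that
\[
\|u\|_6^6-3\|u'\|_2^2\le C\big(\|u'\|_2^2\big)^{\theta}\big(\|u\|_q^q\big)^{1-\theta}+C\|u\|_q^q
\]
for some $\theta<1$. Young's inequality then absorbs this excess once $|\alpha|$ is large, which contradicts negativity. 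This localized inequality is the main obstacle. For $\mu=\mu_\R$ the leading constant is sharp, so no slack is available from the kinetic term. We would try first to argue by blow-up: normalize $u_n$ so that it concentrates, and show that in the limit profile the concentration must escape along a half-line, where the sharp real-line Gagliardo--Nirenberg inequality gives $E_6\ge0$, while the $q$-term is strictly positive. Hence $E_{6,q,\alpha_n}(u_n)\ge0$ for large $n$.

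\textbf{Step 2: existence at $\bar\alpha$.} Take $\alpha_n\downarrow\bar\alpha$ with $\alpha_n>\bar\alpha$. By Theorem 1.7(ii) of \cite{combined} (or by the analogue of Theorem \ref{defocusing_theorem}(2)(ii) proved above), there are ground states $u_n$ with $E_{6,q,\alpha_n}(u_n)=\cE_{6,q,\alpha_n}(\mu,\cG)<0$, and these levels tend to $0$. We need a uniform $H^1$ bound. At $\mu<\mu_\R$ it follows from coercivity; at $\mu=\mu_\R$ we use the Step 1 argument to exclude $\|u_n'\|_2\to\infty$, since blow-up would force the energy to be positive. Passing to a weak limit $u$, we must rule out vanishing and dichotomy. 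Vanishing (mass escaping to infinity along half-lines) would give $\liminf E\ge$ a positive multiple of $|\bar\alpha|\|u_n\|_q^q$ unless $\|u_n\|_q\to0$, in which case the energies converge to $\cE_6$-type energies of nearly half-line profiles, which are $\ge0$. Still, $u_n\not\to0$ must be extracted from the Euler--Lagrange equation: $u_n$ solves the equation with $\lambda_n$, the Kirchhoff conditions, and the fact that $\cE_6(\mu,\cG)<0$ yields a uniform lower bound on $\|u_n\|_{L^\infty(\mathcal K)}$. Then $u\neq0$. By weak lower semicontinuity and Brezis--Lieb splitting, $E_{6,q,\bar\alpha}(u)\le 0$ with $\|u\|_2^2=m\le\mu$. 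If $m<\mu$, we compare with the mass-scaling of \cite{combined} (strict subadditivity using $\cE\le0$ and the fact that the lost mass has nonnegative energy) to derive $m=\mu$. Hence $u$ attains $\cE_{6,q,\bar\alpha}(\mu,\cG)=0$.
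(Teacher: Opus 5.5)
Your Step 1 does not prove $\overline{\alpha}>-\infty$. You reduce the claim to a localized inequality $\|u\|_6^6-3\|u'\|_2^2\le C(\|u'\|_2^2)^{\theta}(\|u\|_q^q)^{1-\theta}+C\|u\|_q^q$, which you leave unproved and call the main obstacle, and then to a blow-up heuristic that is never carried out. No sharp or localized estimate is needed.

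For $\mu\le\mu_\R$ (no terminal point), the modified Gagliardo--Nirenberg inequality already gives $E_6(u)\ge\mathcal{E}_6(\mu)>-\infty$ on $H^1_\mu$. Suppose $E_{6,q,\alpha}(u)<0$ with $\alpha<0$. On one hand, $\frac{|\alpha|}{q}\|u\|_q^q<-E_6(u)\le|\mathcal{E}_6(\mu)|$. On the other hand, $\frac12\|u'\|_2^2+\frac{|\alpha|}{q}\|u\|_q^q<\frac16\|u\|_6^6\le\frac16\|u\|_\infty^{6-q}\|u\|_q^q$. This gives $\|u\|_\infty^{6-q}>6|\alpha|/q$, and, together with $\|u\|_\infty^4\le C\mu\|u'\|_2^2$ from \eqref{GN_infty}, also $\|u\|_q^q\ge c\|u\|_\infty^{q-2}$. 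Combining the three bounds yields $|\alpha|^{4/(6-q)}\le C$, i.e.\ $\overline{\alpha}>-\infty$.

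This is exactly the paper's argument, phrased via Lemma \ref{caratterizzazione alpha}. There $\overline{\alpha}=\inf_{H^1_\mu}Q_{6,q}$, and along a minimizing sequence $\|u_n\|_\infty$, hence $\|u_n\|_q^q$, stays bounded below. So $Q_{6,q}(u_n)=E_6(u_n)/(\frac1q\|u_n\|_q^q)$ is bounded below by a multiple of $\mathcal{E}_6(\mu)$.

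Step 2 inherits this gap, since your $H^1$ bound at $\mu=\mu_\R$ is delegated to Step 1. Its non-vanishing step is also unsupported. Since $E_{6,q,\alpha_n}(u_n)=\mathcal{E}_{6,q,\alpha_n}(\mu)\to0$, the energy gives no lower bound on $\|u_n\|_{L^\infty(\mathcal K)}$. Neither the Euler--Lagrange equation nor $\mathcal{E}_6(\mu)<0$ provides one in the way you assert.

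What repairs your route is the observation that $E_{6,q,\alpha_n}(u_n)<0$ means $Q_{6,q}(u_n)<\alpha_n\to\overline{\alpha}$. So your ground states form a minimizing sequence for $Q_{6,q}$, and the argument proceeds as follows.
\begin{enumerate}
\item The bounds above give $\|u_n\|_q^q\ge c$, hence $E_6(u_n)<\frac{\alpha_n}{q}\|u_n\|_q^q\le -c'<0$ uniformly.
\item This forces $\theta_{u_n}\ge c''$ in Lemma \ref{lem: mod GN}, which yields the $H^1$ bound even at $\mu=\mu_\R$.
\item Non-vanishing then follows as in the paper. If $u_n\rightharpoonup0$, then $u_n\to0$ uniformly on the compact core. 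Extending each restriction to a half-line linearly on $[-1,0]$ gives functions in $H^1(\R)$ of mass at most $\mu_\R+o(1)$. The sharp Gagliardo--Nirenberg inequality on $\R$ then yields $E_6(u_n)\ge o(1)$, contradicting $E_6(u_n)\le -c'$.
\end{enumerate}
Your concluding Brezis--Lieb/scaling step is then fine. Indeed, a nonzero $u$ with $E_{6,q,\overline{\alpha}}(u)\le0$ and mass $m<\mu$ rescales to strictly negative energy at mass $\mu$.
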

%
%
%

Coming back to the periodic setting, some remarks are in order:

\begin{rem}
Once a graph and a mass are fixed, the previous statements provides specific results for the defocusing case, provided that the associated homogeneous problem can be handled. In particular, by combining Proposition \ref{(un)bounded_defocusing} and Theorem \ref{defocusing_theorem} with the results of Section \ref{sec: homo} (which concern the homogeneous case), one obtains a fairly comprehensive picture of the defocusing regime for $1$ and $2$-periodic graphs. Here we consider two specific examples, for the sake of brevity. \\
1) Let $\cG$ be a $1$-periodic graph with a terminal point. Then, by \cite{periodic}, for every $p \in (2,6)$ and $\mu>0$ there exists a ground state for $\cE_p(\mu,\cG)$ on $\cG$. As a consequence, by Theorem \ref{alpha critical 1}-(2), there exists $\overline{\alpha}<0$ such that the same holds for $\cE_{p,q,\alpha}(\mu,\cG)$ for every $q \in (2,p)$ and $\alpha \in [\overline{\alpha},0)$, while for $\alpha<\overline{\alpha}$ there are no ground states. Moreover, if $p=6$ then the interval $(\mu_{6,\cG}, \tilde{\mu}_{\cG}]$ is empty (both values are equal to $\mu_{\R^+}$), and there are no ground state for any $\mu>0$ and $\alpha<0$.\\
2) Let $\cG$ be the square grid; the homogeneous problem is studied in \cite{square_grid}. If $p \in (2,4)$, for every $\mu>0$ there exists a ground state for $\cE_p(\mu,\cG)<0$ on $\cG$. Then by Theorem \ref{alpha critical 1}, there exists $\overline{\alpha}<0$ such that the same holds for $\cE_{p,q,\alpha}(\mu,\cG)$ for every $q \in (2,p)$ and $\alpha \in [\overline{\alpha},0)$, while for $\alpha<\overline{\alpha}$ there are no ground states. If $p \in [4,6)$, then there exists a ground state for $\cE_p(\mu,\cG)<0$ on $\cG$ only for $\mu>\mu_{p,\cG}$ (defined in \eqref{critical mass p}), while for $\mu \le \mu_{p,\cG}$ we have that $\cE_p(\mu\,,\cG)=0$. Therefore, by Theorem \ref{alpha critical 1}, when $\mu>\mu_{p,\cG}$ and $q \in (2,p)$ we can define $\overline{\alpha}<0$ as before, and we have existence of ground states for $\alpha \in [\overline{\alpha},0)$, and non-existence for $\alpha<\overline{\alpha}$. Instead, when $\mu \le \mu_{p,\cG}$, we have that $\overline{\alpha}=0$, and hence for any $\alpha<0$ there is no ground state for $\cE_{p,q,\alpha}(\mu,\cG)$. Finally, for $p=6$ it is known that $\cE_6(\mu,\cG)=0$ for $\mu \le \mu_{\R}$. Thus, for any such $\mu$ we still have that $\overline{\alpha}=0$, and the defocusing problem has never ground states.
\end{rem}

\begin{rem}
Theorem \ref{defocusing_theorem} is somehow of perturbative nature. In particular, the fact that $\cG$ is $1$ or $2$-periodic enters only in the analysis of the homogeneous problem. Actually, we stated Theorem \ref{defocusing_theorem} for $1$ and $2$-periodic graphs, for the sake of consistency with the rest of the paper. However, the result holds for general $d$-periodic graphs with $d \ge 1$, with the same proof. 

Clearly, as already observed, one needs to be able to analyze the homogeneous problem. For $d$-periodic graphs with $d \ge 3$, this is currently known only in the case of the three-dimensional cubic grid; see \cite{spatial_grid}.
\end{rem}


%

\noindent \textbf{Structure of the paper and notation.} In Section \ref{sec: pre}, we collect some preliminary results which will be frequently used in the rest of the paper. In Section \ref{sec: homo}, we state and prove old and new results on the homogeneous problem. Sections \ref{sec: foc} and \ref{sec: def} are devoted to the proofs of the main results in the focusing and defocusing cases, respectively.

In the following, working on a fixed metric graph $\mathcal{G}$, we will often omit the symbol $\mathcal{G}$ in the notations of $L^p(\mathcal{G})$, $H^1(\mathcal{G})$, $E_{p,q,\alpha}(\mu,\cG)$, $\mathcal{E}_{p,q,\alpha}(\mu, \mathcal{G})$ e.c., writing simply $L^p$, $H^1$, $E_{p,q,\alpha}(\mu)$, $\mathcal{E}_{p,q,\alpha}(\mu)$ e.c.; moreover, we will sometimes write $\|\cdot\|_p$, $\|\cdot\|_{H^1}$ instead of $\|\cdot\|_{L^p(\mathcal{G})}$, $\|\cdot\|_{H^1(\mathcal{G})}$.

\section{Preliminaries}\label{sec: pre}

We start by recalling the formal definition of \emph{periodic graph} from \cite[Section 4.1]{intro_graphs}, see also \cite{kuchment, periodic}.

\begin{definition}\label{def: periodic graph}
A metric graph $\mathcal{G}$, with set of vertexes $\mathcal{V}$ and set of edges $\mathcal{E}$, is said to be $d$-periodic, or $\mathbb{Z}^d$-periodic, if there is a group action of $\mathbb{Z}^d$ on $\mathcal{G}$,
\begin{equation}
\label{action}
\begin{split}
+:\mathbb{Z}^d\times \mathcal{G} &\longrightarrow \mathcal{G} \\
    (g,x) &\longmapsto g+x
    \end{split}
\end{equation}
such that:
\begin{itemize}
    \item $\forall g \in \mathbb{Z}^d$, the function $x \mapsto g+x$ is a graph automorphism, that is, it maps vertices into vertices, edges into edges, and it preserves the lengths;
    \item the action is free, namely $\forall g \in \mathbb{Z}^d$, $x \in \mathcal{G}$, if $g+x=x$ then $g=0$;
    \item the action is discrete, that is, $\forall x \in \mathcal{G}$, there is a neighborhood $U$ of $x$ such that $g+x \notin U$ for all $g \neq 0$;
    \item the action is co-compact, namely there exists a compact metric graph $K\subseteq \mathcal{G}$ such that $\mathcal{G}=\bigcup\limits_{g\in \mathbb{Z}^d}\left(g+K\right)$. We will call this subset $K$ a fundamental domain.
\end{itemize}
\end{definition}

\begin{rem}
In \cite{intro_graphs}, it is not required that a fundamental domain be itself a metric graph, but only that it be a compact subset of $\mathcal{G}$. A compact graph is a graph with finitely many edges and vertices, and such that each edge has finite length. It is clear that the requirement that the fundamental domain be a subgraph is not restrictive, under the additional assumption that the distance between any two vertices is bounded below by a positive quantity (see the ``finite ball condition" - Assumption 1.3.5 in \cite{intro_graphs}). 
\end{rem}

In this paper we focus on $1$-periodic and $2$-periodic metric graphs. As observed in \cite[Section 4.1]{intro_graphs}, we can always think that any such graph $\cG$ is embedded into $\R^3$, and the action is by
integer shifts with respect to vectors belonging to an isomorphic image
of the lattice $\mathbb{Z}$ or $\mathbb{Z}^2$. It is often convenient to adopt this point of view.

In what follows, we prove an elementary result which will be used in multiple steps.

\begin{lem}
\label{fund_dom}
    If $\mathcal{G}$ is a $d$-periodic metric graph, we can always choose a fundamental domain $K$ and an action $+$ in Definition \ref{def: periodic graph} in such a way that 
    \begin{equation}\label{cond int 2}
    \text{$K \cap (g+K)  = \emptyset$ for all $g \in \mathbb{Z}^d$ with $\|g\|_{\infty}\geq2$},
    \end{equation}
where $\|\cdot\|_\infty$ is the usual $\sup$ norm.
\end{lem}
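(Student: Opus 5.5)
The idea is to keep the fundamental domain $K$ but replace the action by a sublattice action. Since $K$ is compact and the action is discrete and proper, only finitely many translates of $K$ can meet $K$. We then rescale the generators of $\mathbb{Z}^d$ so that these finitely many translates correspond to indices with $\|g\|_\infty\le 1$, and enlarge the fundamental domain accordingly.

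\textbf{Step 1: finiteness.} We show that $F:=\{g\in\mathbb{Z}^d:\ K\cap(g+K)\neq\emptyset\}$ is finite. Let $\mathcal{V}_K$ denote the vertices of $K$ and $\mathcal{E}_K$ its edges; both sets are finite.

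If $x\in K\cap(g+K)$, then $x=g+y$ with $y\in K$, and $x$ lies on an edge of $K$ (or is a vertex of $K$). Since $y\mapsto g+y$ is a graph automorphism, the edge containing $y$ is mapped to an edge containing $x$. Hence $g+\mathrm{e}$ shares a point with some edge of $K$, for some $\mathrm{e}\in\mathcal{E}_K$.

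Edges of a graph meet only along common edges or at vertices. So either $g+\mathrm{e}=\mathrm{e}'$ for some $\mathrm{e}'\in\mathcal{E}_K$, or $g+\mathrm{v}=\mathrm{v}'$ for some vertices $\mathrm{v}$ of $\mathrm{e}$ and $\mathrm{v}'\in\mathcal{V}_K$.

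Freeness of the action implies that for fixed $\mathrm{v},\mathrm{v}'$ there is at most one $g$ with $g+\mathrm{v}=\mathrm{v}'$. Indeed, if $g_1+\mathrm{v}=g_2+\mathrm{v}$, then $(g_1-g_2)+\mathrm{v}=\mathrm{v}$, which forces $g_1=g_2$. The edge case reduces to the vertex case, since an edge is determined by its endpoints up to finitely many choices. Therefore $F$ is finite. Set $N:=\max_{g\in F}\|g\|_\infty<\infty$.

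\textbf{Step 2: rescaling.} Let $M:=N+1$ and define a new action $g\ast x:=(Mg)+x$. It is still a free, discrete action by graph automorphisms. Take the new fundamental domain
\[
K':=\bigcup_{h\in\{0,\dots,M-1\}^d}(h+K).
\]
This is a compact subgraph, being a finite union of compact subgraphs. Every $k\in\mathbb{Z}^d$ can be written as $k=Mg+h$ with $h\in\{0,\dots,M-1\}^d$, so $\bigcup_g g\ast K'=\mathcal{G}$ and $K'$ is a fundamental domain for the new action.

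\textbf{Step 3: separation.} Suppose $K'\cap(g\ast K')\neq\emptyset$. Then $(h_1+K)\cap(Mg+h_2+K)\neq\emptyset$ for some $h_1,h_2\in\{0,\dots,M-1\}^d$. Translating by $-h_1$ gives $Mg+h_2-h_1\in F$, so $\|Mg+h_2-h_1\|_\infty\le N$.

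On the other hand, if $\|g\|_\infty\ge2$, then $\|Mg\|_\infty\ge 2M$ while $\|h_2-h_1\|_\infty\le M-1$. Hence
\[
\|Mg+h_2-h_1\|_\infty\ge 2M-(M-1)=M+1>N,
\]
a contradiction. This proves \eqref{cond int 2} for $K'$ and the action $\ast$.

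\textbf{Main obstacle.} The only delicate point is Step 1: finiteness of $F$ must be obtained rigorously from freeness and the graph-automorphism property. The argument above does this via the finitely many vertices of $K$. Alternatively, one can use discreteness together with compactness of $K$ directly, since the orbit of a point has no accumulation points.
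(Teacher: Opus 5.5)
Your Steps 2--3 are the paper's construction. The paper's $s_K$ is your $M=N+1$, its $\tilde K$ is your block $\bigcup_{h\in\{0,\dots,M-1\}^d}(h+K)$, its new action is $g\mapsto Mg+\cdot$, and its separation estimate $\|Mg+h_2-h_1\|_\infty\ge M\|g\|_\infty-(M-1)$ is yours. Your Step 1 is cleaner than the paper's compactness sketch for $s_K<\infty$, since freeness gives $|F|\le|\mathcal V_K|^2$ directly. In the edge case, just note that $g$ sends an endpoint of $\mathrm{e}$ to an endpoint of $\mathrm{e}'$.

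What you omit is the paper's preliminary step. Before rescaling, it enlarges $K$ by paths from a vertex $x_0$ to $g+x_0$ for every $\|g\|_\infty=1$, so that $K\cap(g+K)\neq\emptyset$ for all such $g$. This matters because in Definition \ref{def: periodic graph} a fundamental domain is a compact \emph{metric graph}, and in this paper metric graphs are connected. Later arguments also rely on this, e.g.\ the covering paths $h,v$ in the proof of Theorem \ref{SOBOLEV_2} and the bound $\max_{B_{2n}} d_{2n}\le(2n+1)l(K)$.

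Your $K'$ need not be connected. Take vertices $a_n,b_n$ ($n\in\mathbb Z$) with unit edges $[a_n,b_{n+5}]$, $[a_{n+3},b_{n+5}]$, $[a_{n+3},b_n]$, let $\mathbb Z$ act by index shift, and let $K$ be the path $a_0b_5a_3b_0$. Then $\mathcal G$ is connected, $F=\{0,\pm3,\pm5\}$ and $M=6$. However, $K'=\bigcup_{h=0}^{5}(h+K)$ is the disjoint union of $\bigcup_{h\in\{0,2,3,5\}}(h+K)$ and $(1+K)\cup(4+K)$.

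The repair is to perform the paper's enlargement before your Step 1. The enlarged $K$ is still compact and connected, so your finiteness argument still applies. Every $g$ with $\|g\|_\infty\le 1$ then lies in $F$, so consecutive translates in the box intersect and $K'$ is connected. Moreover, $K'$ again meets $g\ast K'$ for all $\|g\|_\infty=1$.
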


\begin{proof}
Let $K$ be a fundamental domain, according to Definition \ref{def: periodic graph}; as a first step, we wish to show that, if necessary replacing $K$ with a larger subgraph of $\cG$, we can suppose that
\beq\label{1conn}
K \cap (g+K) \neq \emptyset \quad \forall g: \ \|g\|_\infty=1.
\eeq
If this is not the case, 
we fix $x_0 \in \mathcal{V}(K)$ and for any $g$ with $\|g\|_\infty=1$ we consider a path $\gamma_{x_0,g}$ connecting $x_0$ with $g+x_0$. Then we take
\[
K':= K \cup \bigcup_{\|g\|_\infty=1} \gamma_{x_0,g}.
\]
Since $g+x_0 \in \mathcal{V}(\cG)$, this is a subgraph of $\cG$, and by construction $K' \cap (g+K') \neq \emptyset \quad \forall g: \ \|g\|_\infty=1$. 
%
%

Now we rename $K'=K$, and let
\begin{equation*}
        s_K:=\min\left\{n\in\mathbb{N} | \ \text{ if } \|h\|_{\infty}\geq n \ \implies \  (h+K)\cap K = \emptyset  \right\};
\end{equation*}
this value indicates how large the $\infty$-norm of $h \in \mathbb{Z}^d$ needs to be in order for $(h+K)$ not to intersect $K$. We point out that $s_K$ is finite. Otherwise, there exists a sequence $\{g_n\}\subseteq \mathbb{Z}^d$ with $\|g_n\|_{\infty} \rightarrow +\infty$ such that $K \cap (g_n+K) \neq \emptyset$. This in turn would imply the existence of a sequence $\{x_n\} \subset K$ such that $-g_n+x_n \in K$ for every $n$. Since $K$ is compact, both $\{x_n\}$ and $\{-g_n+x_n\}$ converge, up to extracting subsequences. But, since $\|g_n\|_\infty \to \infty$ while $\{x_n\}$ is bounded, this is not possible.

Now, by \eqref{1conn} it is plain that $s_K\geq2$. If $s_K=2$, the proof is complete. Otherwise, the idea is to define a larger fundamental domain as follows: we set
    \begin{equation*}
        \tilde{K}=\bigcup\left\{g+K\text{ such that }
            \|g\|_{\infty}<s_K, \text{ with } g_i\geq 0\,\, \forall i \in \{1 \dots n\}\right\}
    \end{equation*}
    and consider the new group action of $\mathbb{Z}^d$ on $\mathcal{G}$ defined by
    \begin{equation*}
\begin{split}
+':\mathbb{Z}^d\times \mathcal{G} &\longrightarrow \mathcal{G} \\
    (g,x) &\longmapsto g+'x:=s_K g+x.
    \end{split}
\end{equation*}
This defines the same $d$-periodic graph $\mathcal{G}$, and we claim that:
\begin{itemize}
\item[($i$)] $\tilde{K}$ is a fundamental domain (with respect to the new action $+'$);
\item[($ii$)] condition \eqref{cond int 2} is satisfied when $\tilde K$ and $+'$ are used in place of $K$ and $+$.
\end{itemize}
Indeed, firstly
    \begin{equation*}
    \begin{split}
        &x \in \mathcal{G} \implies \exists h \in \mathbb{Z}^d, y \in K \text{ such that } \\
        &x=h+y = s_K g+t+y \text{ for some $g,t \in \mathbb{Z}^d$ with $\|t\|_{\infty}<s_K$ and  $t_i\geq 0$ for every $i$},
        \end{split}
    \end{equation*}
   having accomplished an Euclidean division on the components of the vector $h$ to obtain $g$ and $t$. This means that $\tilde{K}$ is a fundamental domain, since $t+y \in\tilde{K}$. \\
Furthermore, we can check that $s_{\tilde{K}} = 2$. Suppose that 
   \begin{equation*}
\emptyset\neq(h+'\tilde{K})\cap \tilde{K}=(s_K h+\tilde{K})\cap \tilde{K}.
   \end{equation*}
Then $\exists h_1, h_2 \in \mathbb{Z}^d$ with positive components such that $\|h_1\|_{\infty}, \|h_2\|_{\infty} <s_K$ and 
        \begin{equation*}
            (h_1+s_K h+K)\cap (h_2+K) \neq \emptyset \quad \iff \quad (h_1-h_2+s_Kh+K)\cap K \neq \emptyset,
        \end{equation*}
        whence we deduce that
    \begin{equation}
    \label{paggassi}
        s_K>\|h_1-h_2+s_K h\|_{\infty}\geq s_K \|h\|_{\infty}-\|h_1-h_2\|_{\infty} \geq s_K\|h\|_{\infty}-s_K
    \end{equation}
   (the last inequality holds since $\|h_1\|_{\infty}, \|h_2\|_{\infty} <s_K$ and their components are positive). Finally, \eqref{paggassi} implies that $\|h\|_{\infty}<2$, that is $s_{\tilde{K}}=2$, as claimed.
\end{proof}

\begin{rem}
We emphasize that the proof provides a constructive procedure to obtain a fundamental domain $K$ and an action $+$ satisfying the statement of the lemma, whenever this is not immediately evident.

For example, consider the two-periodic graph obtained by translating along the coordinate axes $x$ and $y$ the fundamental domain $K$ represented on the left-hand side of Figure~1. Recalling that one may regard a $2$-periodic graph as embedded in $\mathbb{R}^3$, in this case $K$ consists of the unit square together with a curved edge connecting $(0,1/2,0)$ to $(2,1/2,0)$. We may assume that this edge lies entirely in the half-space $\{z>0\}$, except for its endpoints, and that it does not intersect its translated copies.

If we took $K$ as a fundamental domain, with the action given by the usual translations by $(1,0,0) \simeq (1,0)$ and $(0,1,0) \simeq (0,1)$ (and their multiples), condition~\eqref{cond int 2} would fail, since $(2,1/2,0)\in K\cap ((2,0)+K)$. 

On the other hand, it is apparent that in this case $s_K=3$. We therefore define $\tilde K$ as in the proof. In this way we obtain the unit square grid restricted to $[0,3]\times [0,3]\times \{0\}$, together with nine curved edges connecting $(0,1/2,0)$ to $(2,1/2,0)$, $(1,1/2,0)$ to $(3,1/2,0)$, and so on. For the action, rather than using translations by multiples of $(1,0)$ and $(0,1)$, we consider the new action $+'\,$ given by translations by $(3,0)$ and $(0,3)$ (and multiples). By construction, $\tilde K$ and $+'$ may be used as a fundamental domain and action generating $\mathcal{G}$, and for them the conclusion of Lemma~\ref{fund_dom} holds.
\end{rem}

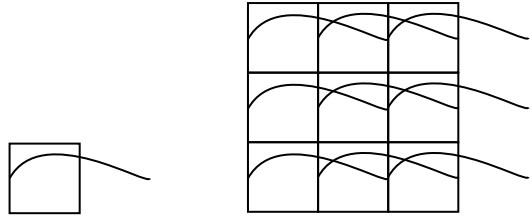
\begin{figure}[ht]
\begin{center}

\tikzset{every picture/.style={line width=0.75pt}} 

\begin{tikzpicture}[x=0.75pt,y=0.75pt,yscale=-0.7,xscale=0.7]

\draw   (81,141) -- (131,141) -- (131,191) -- (81,191) -- cycle ;
\draw    (81,166) .. controls (104.2,124.2) and (180.2,172.2) .. (181,166) ;
\draw   (251,40) -- (301,40) -- (301,90) -- (251,90) -- cycle ;
\draw    (251,66) .. controls (274.2,24.2) and (350.2,72.2) .. (351,66) ;
\draw   (301,40) -- (351,40) -- (351,90) -- (301,90) -- cycle ;
\draw    (301,65) .. controls (324.2,23.2) and (400.2,71.2) .. (401,65) ;
\draw   (351,40) -- (401,40) -- (401,90) -- (351,90) -- cycle ;
\draw    (351,65) .. controls (374.2,23.2) and (450.2,71.2) .. (451,65) ;
\draw   (251,90) -- (301,90) -- (301,140) -- (251,140) -- cycle ;
\draw    (251,116) .. controls (274.2,74.2) and (350.2,122.2) .. (351,116) ;
\draw   (301,90) -- (351,90) -- (351,140) -- (301,140) -- cycle ;
\draw    (301,115) .. controls (324.2,73.2) and (400.2,121.2) .. (401,115) ;
\draw   (351,90) -- (401,90) -- (401,140) -- (351,140) -- cycle ;
\draw    (351,115) .. controls (374.2,73.2) and (450.2,121.2) .. (451,115) ;
\draw   (251,140) -- (301,140) -- (301,190) -- (251,190) -- cycle ;
\draw    (251,166) .. controls (274.2,124.2) and (350.2,172.2) .. (351,166) ;
\draw   (301,140) -- (351,140) -- (351,190) -- (301,190) -- cycle ;
\draw    (301,165) .. controls (324.2,123.2) and (400.2,171.2) .. (401,165) ;
\draw   (351,140) -- (401,140) -- (401,190) -- (351,190) -- cycle ;
\draw    (351,165) .. controls (374.2,123.2) and (450.2,171.2) .. (451,165) ;

\end{tikzpicture}

\end{center}
\caption{\small On the left, the fundamental domain $K$, and on the right the new fundamental domain $\tilde K$.}
\label{fig:1}
\end{figure}

\subsection{Gagliardo-Nirenberg inequality and critical mass}

When dealing with minimization (and more in general with the search of critical points) for NLS-type energy, a key ingredient is represented by the Gagliardo-Nirenberg inequality. In the Euclidean space $\R^d$, it takes the form
\begin{equation}
\label{GN_abstract_R_d}
\|u\|^p_{L^p(\mathbb{R}^d)} \leq K_{p,d} \|u\|^{p-\alpha}_{L^2(\mathbb{R}^d)}\|\nabla u\|^{\alpha}_{L^2(\mathbb{R}^d)} \quad \text{where }\quad \alpha = \frac{d(p-2)}{2},
\end{equation}
for every $u \in H^1(\mathbb{R}^d)$, with $p \in [2, 2d/(d-2))$ if $d \ge 3$ and $p \in [2, +\infty)$ if $d=1,2$ (see \cite[Theorem 1.3.7]{Cazenave}).

On general noncompact metric graphs, the Gagliardo-Nirenberg inequality holds for $d=1$ (see e.g. \cite[Proposition 2.1]{threshold}): there exists an optimal constant $C_{q,\cG}>0$ depending on $q$ and $\cG$ such that
\beq\label{GN}
\lp{u}{q}^q \le  C_{q,\cG} \lp{u}{2}^{\frac{q+2}2} \lp{u'}2^{\frac{q-2}2} \qquad \forall u \in H^1(\cG).
\eeq
Precisely, $C_{q,\cG}$ is characterized as
\[
C_{q,\cG} = \sup_{u \in H^1(\cG) \setminus \{0\}} \frac{\lp{u}{q}^q}{\lp{u}{2}^{\frac{q+2}2} \lp{u'}2^{\frac{q-2}2}}.
\]
The inequality also holds for $q=\infty$: one obtains that
\beq\label{GN_infty}
\lp{u}{\infty} \le  C_{\infty,\cG} \lp{u}{2}^{\frac{1}2} \lp{u'}2^{\frac{1}2} \qquad \forall u \in H^1(\cG).
\eeq
A relevant role is played by the optimal constant obtained for the $L^2$-critical exponent $q=6$: one defines the \emph{critical mass} $\mu_{\cG}= \mu_{6,\cG}$ associated with $\cG$ as
\beq\label{critical mass}
\mu_{\cG} =\mu_{6,\cG}:= \sqrt{ \frac{3}{C_{6,\cG}} }.
\eeq
This value discriminates between existence and non-existence in the critical case, we refer in particular to \cite{critical}. Dealing with critical problems, it is also useful to recall the following \emph{modified Gagliardo-Nirenberg inequality}: 

\begin{lemma}[Lemma 4.4 in \cite{critical}]\label{lem: mod GN}
Assume that $\cG$ is non-compact and has no terminal point, and let $u \in H^1_\mu(\cG)$ for some $\mu \in (0,\mu_\R]$. Then there exists $\theta_u \in [0,\mu]$ such that
\[
\lp{u}{6}^6\le 3\left(\frac{\mu-\theta_u}{\mu_{\R}}\right)^2 \lp{u'}{2}^2 + C_{\cG} \theta_u^\frac12,
\]
with $C_{\cG} >0$ depending only on $\cG$.
\end{lemma}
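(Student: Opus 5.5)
The plan is to reduce the estimate to a rearrangement argument on the real line, with $\theta_u$ the mass of the part of $u$ where symmetric rearrangement is not available. Since replacing $u$ by $|u|$ changes neither side, assume $u\ge0$, and set $M=\|u\|_{L^\infty(\cG)}$ and $N(t)=\#\{x\in\cG:\ u(x)=t\}$ for $t\in(0,M)$. The basic principle is the one behind \eqref{critical mass combined}. If $N(t)\ge2$ for a.e. $t$ in a range of levels, the symmetric rearrangement on $\R$ of $u$ restricted to those levels does not increase the Dirichlet energy (P\'olya--Szeg\H{o} via the coarea formula). It preserves the $L^2$ and $L^6$ norms, so the sharp inequality on $\R$, $\|v\|_6^6\le 3(\|v\|_2^2/\mu_\R)^2\|v'\|_2^2$, can be transferred.

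First step: locate the levels with a single preimage. Let $T=\{t\in(0,M):N(t)=1\}$. If $N(t)=1$ with preimage $x$, then $\{u>t\}$ and $\{u<t\}$ are separated by the single point $x$. Moreover $\{u>t\}$ is bounded in measure, since $u\in L^2$. The absence of terminal points forces $\{u\ge t\}$ to be a compact piece of $\cG$ attached at one cut point and containing a cycle; otherwise it would end at a vertex of degree one. Set $t_0:=\inf T$ (or $t_0=M$ if $T=\emptyset$) and split $u=v+w$ with $v=\min\{u,t_0\}$ and $w=(u-t_0)^+$. Define
\[
\theta_u:=\int_{\{u>t_0\}} u^2\,dx \in[0,\mu].
\]

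Second step: estimate $v$. By construction $N(t)\ge2$ for a.e. $t<t_0$. The rearrangement argument applies to $v$ restricted to $\{u\le t_0\}$, whose mass is $\mu-\theta_u$, and gives
\[
\int_{\{u\le t_0\}}u^6 \le 3\Big(\frac{\mu-\theta_u}{\mu_\R}\Big)^2\int_{\{u\le t_0\}}|u'|^2 .
\]
Some care is needed with the constant piece of $v$ on $\{u>t_0\}$. I would rearrange $v$ on its own, or equivalently count only levels below $t_0$, so that the factor is $\mu-\theta_u$ rather than $\mu$.

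Third step, the main obstacle: prove $\int_{\{u>t_0\}}u^6\le 3(\frac{\mu-\theta_u}{\mu_\R})^2\int_{\{u>t_0\}}|u'|^2+C_\cG\theta_u^{1/2}$, and in fact without using much gradient. The set $A=\{u>t_0\}$ contains a cycle of $\cG$, so its length is at least $\ell_0>0$, the shortest cycle length; for a graph with finitely many edges this is a fixed constant. On $A$ I would combine two facts. First, since $u\ge t_0$ on $A$ and $|A|\ge\ell_0$, we have $t_0^2\ell_0\le\theta_u$. Second, a one-dimensional estimate on a compact set along a path: $\|u\|_{L^\infty(A)}^2\le t_0^2+2\|u\|_{L^2(A)}\|u'\|_{L^2(A)}$, which feeds $\int_A u^6\le\|u\|_{L^\infty(A)}^4\theta_u$.

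If the gradient term cannot be fully absorbed this way, I would first try the following choice instead. Pick $t_0$ as the largest level below which $N\ge2$ a.e. and above which the superlevel set lies in a fixed compact core, and define $\theta_u$ accordingly. Then the use of $\mu\le\mu_\R$, namely $(\mu-\theta_u)/\mu_\R\le1$, must absorb the residual $\|u'\|$ contribution through Young's inequality. This leaves a remainder of order $\theta_u^{1/2}$ controlled by $\ell_0$, which is where I expect the delicate bookkeeping to lie. Adding the estimates of the second and third steps then gives the claimed inequality.
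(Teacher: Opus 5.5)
Your Step~2 is sound: rearranging only the levels below $t_0$, with level measures $|\{t<u\le t_0\}|$, gives a function on $\R$ of mass $\mu-\theta_u$ and Dirichlet energy at most $\int_{\{u<t_0\}}|u'|^2$. The problem is that the splitting points the wrong way, and Step~3 is not merely delicate for your choice of $t_0$ and $\theta_u$: it is false.

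Whenever $T\neq\emptyset$, the set $A=\{u>t_0\}$ contains the maximum of $u$. So Step~3 asks you to control the whole peak of $u$, which can be arbitrarily concentrated, by $C_\cG\theta_u^{1/2}$ plus a gradient term. That gradient term has coefficient $3((\mu-\theta_u)/\mu_\R)^2$, which is set by the mass of the \emph{other} piece.

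Here is a concrete counterexample. Let $\cG$ be a tadpole: a loop of length $\ell$ attached at a vertex $\mathrm v$ to a half-line. It is non-compact and has no terminal point. Set $u(x)=ae^{-x}$ on the half-line and $u=a+\psi_\lambda$ on the loop, where $\psi_\lambda(y)=\sqrt\lambda\,\varphi(\lambda(y-\ell/2))$ for a fixed compactly supported bump $\varphi\ge0$. Take $a$ and $\|\varphi\|_2$ small, so that $\mu\le\mu_\R$ for all large $\lambda$.
\begin{itemize}
\item Every $t\in(0,a)$ has exactly one preimage, so $t_0=\inf T=0$ and $\theta_u=\mu$.
\item Step~3 would then assert $\|u\|_6^6\le C_\cG\mu^{1/2}$.
\item But $\|u\|_6^6\ge\lambda^2\|\varphi\|_6^6\to\infty$ while $\mu$ stays bounded.
\end{itemize}
The tools you list cannot rescue this. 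The bound $\|u\|_{L^\infty(A)}^4\theta_u$ produces a gradient term with coefficient of order $\theta_u^3$, which has nothing to do with $(\mu-\theta_u)^2$. Here that target coefficient is zero, so there is no slack for a Young-type absorption. Your fallback (``the largest level below which $N\ge2$ a.e.'') is again $\inf T$, so the same example applies.

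Your Step~1 observation is the right ingredient: for $t\in T$, $\{u\ge t\}$ contains a cycle. But what it controls is the \emph{height} of the single-preimage levels, so it must be used at the top of $T$, not the bottom.
\begin{itemize}
\item Set $\tau:=\sup T$. Then every level in $(\tau,M)$ has at least two preimages.
\item Since $|\{u\ge\tau\}|\ge\ell_0$, you get $\tau^2\ell_0\le\theta:=\int_\cG\min\{u,\tau\}^2$.
\item Hence the part of $u$ below $\tau$ contributes at most $\tau^4\theta\le\theta^3/\ell_0^2\lesssim\theta^{1/2}$ to $\|u\|_6^6$.
\end{itemize}
The concentrated part of $u$ then lies above $\tau$, and that is the part to compare with $\R$ by rearrangement, with mass at most $\mu-\theta_u$.

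The real difficulty, which your plan does not address, is that this upper part does not vanish on $\partial\{u>\tau\}$. It sits on a plateau of height $\tau$ over a set of length at least $\ell_0$. The naive splitting $u=\tau+(u-\tau)^+$ produces cross terms such as $6\tau\int((u-\tau)^+)^5$, which are not bounded in terms of $\theta$. Deciding how much of the plateau's mass can go into $\theta_u$, and how the resulting errors are paid for by $C_\cG\theta_u^{1/2}$, is the actual content of the lemma.

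The paper does not reprove this lemma; it imports Lemma~4.4 of \cite{critical}. It only remarks that the argument there uses an infinite path $\Gamma$ starting from a given point, an ingredient that plays no role in your proposal.
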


The previous inequalities were proved for noncompact metric graphs with a finite number of edges, but the arguments of the proofs works also in the periodic setting. In particular, the proof of Lemma 4.4 in \cite{critical} relies on the existence of a path $\Gamma$ of infinite length which starts in a given point $x_0 \in \mathcal{G}$. This path is constructed connecting $x_0$ with the vertex of a half-line of $\mathcal{G}$. In our setting, $\mathcal{G}$ is $d$-periodic and does not contain any half-line, but, as already observed in \cite{periodic}, one can adapt the argument with minor changes.

\section{The homogeneous problem on periodic graphs}\label{sec: homo}

The purpose of this section is twofold. On the one hand, we recall the known results for $1$-periodic graphs. On the other hand, we extend and generalize the results in \cite{square_grid, honeycomb}, obtained for the square and honeycomb grids, to general $2$-periodic graphs. We recall that, for the homogeneous problem, we use the notation $E_p$ and $\cE_p$ introduced in \eqref{homog func}.

\subsection{$1$-periodic case} The investigation of the critical and subcritical regimes on $1$-periodic graphs is carried out in \cite{periodic}, leading to the following main results.
\begin{thm}[Theorem 1.1 in \cite{periodic}]
\label{1_periodic_subcritical}
    Let $\mathcal{G}$ be a $1$-periodic metric graph and $2<p<6$. Then for every $\mu>0$ we have that $\mathcal{E}_p(\mu, \mathcal{G})<0$ and there exists a ground state.
\end{thm}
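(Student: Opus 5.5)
Since this is quoted from \cite{periodic}, I would not reprove it from scratch. For completeness, here is the standard route. It is a concentration-compactness argument adapted to the $\mathbb{Z}$-action.

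\emph{Step 1: bounds on the energy.} First, $\mathcal{E}_p(\mu,\cG)>-\infty$ and $E_p$ is coercive on $H^1_\mu(\cG)$. This follows directly from \eqref{GN}: since $p<6$, the term $\|u\|_p^p\le C\mu^{(p+2)/4}\|u'\|_2^{(p-2)/2}$ is sublinear in $\|u'\|_2^2$.

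\emph{Step 2: negative level.} Next, $\mathcal{E}_p(\mu,\cG)<0$. Since $\cG$ is $1$-periodic, it contains arbitrarily long simple paths: concatenate translates of a path in $K$ joining $x_0$ to $1+x_0$. I would take a test function $v_\lambda(x)=\sqrt{\lambda}\,\varphi(\lambda x)$ with $\varphi$ compactly supported, placed along such a path with small $\lambda$, and extend it by the nearest value so that it stays in $H^1$. Alternatively, one can use a function that is constant on many consecutive copies of $K$ and tapers linearly. Either way, the kinetic term scales like $\lambda^2$ and the $L^p$ term like $\lambda^{(p-2)/2}$. As $\lambda\to0$ the negative term dominates because $(p-2)/2<2$, and the mass is kept equal to $\mu$ by a rescaling.

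\emph{Step 3: compactness.} Take a minimizing sequence $u_n$, which is bounded in $H^1$, and use the action to translate it. Vanishing is excluded: if $\sup_g\|u_n\|_{L^\infty(g+K)}\to0$, then $\|u_n\|_p^p\le\|u_n\|_\infty^{p-2}\mu\to0$, so $\liminf E_p(u_n)\ge0>\mathcal{E}_p(\mu)$. Hence there are $g_n\in\mathbb{Z}$ such that $-g_n+u_n$ has a nontrivial weak limit $u$ in $H^1$. By local compact embedding on the compact sets $g+K$, the convergence is locally uniform.

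\emph{Step 4: excluding dichotomy.} Suppose $\|u\|_2^2=m<\mu$. By Brezis--Lieb and weak convergence of $\|u'\|_2^2$,
\[
\mathcal{E}_p(\mu)\ge E_p(u)+\mathcal{E}_p(\mu-m).
\]
To conclude, I would use the strict subadditivity that comes from homogeneity: for $\theta>1$, $E_p(\sqrt\theta v)<\theta E_p(v)$ whenever $\|v\|_p>0$, so $\mathcal{E}_p(\theta m)<\theta\mathcal{E}_p(m)$ when $\mathcal{E}_p(m)<0$. This gives $E_p(u)\ge\frac{m}{\mu}\cdot$(something), and the usual chain yields
\[
\mathcal{E}_p(\mu)>\mathcal{E}_p(m)+\mathcal{E}_p(\mu-m),
\]
which is a contradiction. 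Hence $m=\mu$, so $u_n\to u$ strongly in $L^2$ and, by \eqref{GN}, in $L^p$. Weak lower semicontinuity then shows that $u$ is a ground state.

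\emph{Main obstacle.} The delicate point is Step 2, namely building admissible test functions on a general $1$-periodic graph where the vertex degrees are uncontrolled. I would handle it by using functions that are constant on whole copies of $K$, with linear cut-offs over a fixed number of cells. This makes the ratio of kinetic to potential contributions go to zero as the plateau is widened, with the mass normalized.
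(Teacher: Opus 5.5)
Your overall architecture (coercivity from \eqref{GN}, then $\mathcal{E}_p(\mu)<0$, then compactness up to $\mathbb{Z}$-translations via strict subadditivity) is the right one, and Steps 1, 3 and 4 are standard. Two small corrections to Step 4. The chain actually gives $\mathcal{E}_p(\mu)\ge\mathcal{E}_p(m)+\mathcal{E}_p(\mu-m)$, and this contradicts the strict inequality $\mathcal{E}_p(\mu)<\mathcal{E}_p(m)+\mathcal{E}_p(\mu-m)$ coming from homogeneity; you wrote the sign the other way. You also need continuity of $\nu\mapsto\mathcal{E}_p(\nu)$ to pass from $\mathcal{E}_p(\|u_n-u\|_2^2)$ to $\mathcal{E}_p(\mu-m)$.

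The genuine gap is in Step 2, exactly where you place the main obstacle: the fix you commit to fails for $p\in[4,6)$. Take $u=\varepsilon$ on $n$ consecutive cells, with linear cut-offs over a \emph{fixed} number of cells.
\begin{itemize}
\item The mass gives $\mu\asymp\varepsilon^2 n$.
\item The ramps have bounded length and slope $\asymp\varepsilon$, so $\|u'\|_2^2\asymp\varepsilon^2\asymp\mu/n$, not $n^{-2}$.
\item The potential term is $\|u\|_p^p\asymp\varepsilon^p n\asymp\mu^{p/2}n^{1-p/2}$.
\end{itemize}
The kinetic/potential ratio is therefore $\asymp\mu^{1-p/2}n^{(p-4)/2}$. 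It tends to $0$ only for $p<4$. It blows up for $p>4$, and for $p=4$ it is of order $1/\mu$, so small masses are not reached. So these functions do not achieve the scaling ``kinetic $\sim\lambda^2$'' you claim. Without $\mathcal{E}_p(\mu)<0$, Step 3 cannot exclude vanishing, and Step 4 also needs negativity at the masses $m$ and $\mu-m$.

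Your first variant does not rescue this as stated. Extending $\sqrt\lambda\,\varphi(\lambda x)$ off a path ``by the nearest value'' is in general discontinuous. On the ladder, for instance, points on the other rail switch their nearest rung halfway between two rungs, so the extension jumps there. Moreover, the part of $\cG$ off the path is unbounded, so neither membership in $H^1$ nor the $\lambda^2$ bound on the kinetic term is justified.

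The missing idea is that the ramp must be as long as the plateau. This is what the competitor in the proof of Theorem~\ref{sottocritico_combined} does; with $\alpha=0$ it is exactly what is needed here. The function $u_n=\varepsilon_n\,d_{2n}/\max d_{2n}$ has slope $\lesssim\varepsilon_n/n$ on a set of length $\lesssim n$. Hence $\|u_n'\|_2^2\lesssim\varepsilon_n^2/n\asymp n^{-2}$, and $E_p(u_n)\le C_1n^{-2}-C_2n^{1-p/2}<0$ for $n$ large precisely because $p<6$.

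Equivalently, you may take $\varepsilon\,\psi(d(x,x_0)/n)$ for a fixed cut-off profile $\psi$. Being a function of the graph distance, it is automatically continuous and Lipschitz, with no path or extension needed. The only geometric input is $\ell(B_n)\asymp n$ together with $d(B_n,\overline{B_m^c})\gtrsim m-n$, both of which follow from choosing $K$ as in Lemma~\ref{fund_dom}. With this replacement, the rest of your argument (negative level plus the compactness result of \cite[Proposition 3.1]{periodic}) goes through.
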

The critical case, for $p=6$, is also studied in \cite{periodic}, and the following topological assumption on $\mathcal{G}$ plays a crucial role:
\[
\begin{split}
(H):& \text{ removing any edge $\mathrm{e}$ from $\cG$ generates only non-compact components}.
\end{split}
\]
As observed in \cite{periodic}, this in particular ensures that for every point $x \in \mathcal{G}$, there exist two simple curves starting at $x$ that are almost everywhere disjoint and of infinite length.

\begin{thm}[Theorems 1.2 and 1.3 in \cite{periodic}]
\label{1_periodic_critical}
If $\mathcal{G}$ is a $1$-periodic metric graph, then there exists a critical mass $\mu_{\mathcal{G}}$, defined in \eqref{critical mass}, such that: 
\begin{itemize}
\item[($i$)] when assumption $(H)$ holds, $\mu_{6,\mathcal{G}}=\mu_{\mathbb{R}}=\frac{\sqrt{3}\pi}{2}$ and
\begin{equation*}
\mathcal{E}_6(\mu, \mathcal{G})=
    \begin{cases}
        0 \quad &\text{ if } \mu\leq \mu_{\mathbb{R}} \\
        -\infty &\text{ if } \mu>\mu_{\mathbb{R}},
    \end{cases}
\end{equation*}
hence $\mathcal{E}_6(\mu, \mathcal{G})=\mathcal{E}_6(\mu, \mathbb{R})$ for every $\mu>0$. Moreover, the ground state level is never attained.
\item[($ii$)] If $\cG$ contains a terminal edge, then $\mu_{6,\mathcal{G}}=\mu_{\mathbb{R}^+}=\frac{\sqrt{3}\pi}{4}$, and
\begin{equation*}
    \mathcal{E}_6(\mu, \mathcal{G})=
    \begin{cases}
        0 \quad &\text{ if } \mu\leq \mu_{\mathbb{R}^+} \\
        -\infty &\text{ if } \mu>\mu_{\mathbb{R}^+},
    \end{cases}
\end{equation*}
hence $\mathcal{E}_6(\mu, \mathcal{G})=\mathcal{E}_6(\mu, \mathbb{R}^+)$ for every $\mu>0$. Moreover, the ground state level is never attained. 
\item[($iii$)] If $\mathcal{G}$ does not satisfy ($H$) and has no terminal edge, then
\begin{equation}
\label{stima massa critica}
    \mu_{\mathbb{R}^+} \leq \mu_{\mathcal{G}} \leq \mu_{\mathbb{R}}.
\end{equation}
If in addition $\mu_{\mathcal{G}}<\mu_{\mathbb{R}}$, then 
\begin{equation}
\label{valori inf energia}
    \mathcal{E}_6(\mu, \mathcal{G})=
    \begin{cases}
        0 \quad &\text{ if } \mu\leq \mu_{\mathcal{G}} \\
        <0 \quad &\text{ if } \mu_{\mathcal{G}}<\mu \leq \mu_{\mathbb{R}} \\
        -\infty &\text{ if } \mu>\mu_{\mathbb{R}},
    \end{cases}
    \end{equation}
and a ground state exists if and only if $\mu \in [\mu_{\mathcal{G}}, \mu_{\mathbb{R}}]$. 
\end{itemize}
\end{thm}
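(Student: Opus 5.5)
The statement just above is Theorem \ref{1_periodic_critical}, quoted from \cite{periodic}. The plan is to follow the scheme of \cite{critical}, originally written for graphs with finitely many edges, and adapt it to the periodic setting as in \cite{periodic}. Everything rests on three tools: the Gagliardo--Nirenberg inequality \eqref{GN} with $q=6$, the modified inequality of Lemma \ref{lem: mod GN}, and the existence of infinite paths inside $\cG$ replacing half-lines.

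\textbf{Universal bounds and the value $-\infty$.} First I establish the bounds valid for every $1$-periodic $\cG$.
\begin{itemize}
\item \emph{Upper bound on $\mu_{\cG}$, and $\cE_6(\mu)\le0$.} Take a soliton on $\R$, scale it to be very concentrated, and transplant it (after a suitable cut-off) onto a single edge. This gives $C_{6,\cG}\ge C_{6,\R}$, i.e. $\mu_{\cG}\le\mu_\R$. Spreading functions out (mass-preserving dilations) gives $\cE_6(\mu)\le 0$.
\item \emph{Lower bound on $\mu_{\cG}$.} Use the symmetric (decreasing) rearrangement on $\R^+$. Since every point of $\cG$ has infinitely many preimages' worth of measure in the unbounded graph, rearrangement does not increase $\|u'\|_2$. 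Hence $C_{6,\cG}\le C_{6,\R^+}$, i.e. $\mu_{\R^+}\le\mu_{\cG}$.
\item \emph{Critical mass.} From \eqref{GN} with $q=6$ we get $E_6(u)\ge \frac12\|u'\|_2^2\bigl(1-(\mu/\mu_{\cG})^2\bigr)$, so $\cE_6(\mu)=0$ for $\mu\le\mu_{\cG}$.
\item \emph{Unboundedness.} For $\mu>\mu_\R$, concentrated solitons on one edge force $\cE_6=-\infty$. With a terminal edge, half-solitons concentrated at the terminal point give $-\infty$ already for $\mu>\mu_{\R^+}$.
\end{itemize}

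\textbf{Cases (i) and (ii).} Under $(H)$, through every point there are two a.e.\ disjoint infinite simple curves. Gluing them gives a path on which the decreasing rearrangement acts with each level set having at least two preimages. This yields $\|u'\|_2\ge\|\hat u'\|_{L^2(\R)}$, hence $C_{6,\cG}=C_{6,\R}$ and $\mu_{\cG}=\mu_\R$. In both (i) and (ii), the infimum is $0$ up to the threshold, so a ground state would satisfy $E_6(u)=0$. Equality in the rearrangement or Gagliardo--Nirenberg inequality would then force $u$ to be, up to measure-preserving rearrangement, an extremal on $\R$ (resp.\ $\R^+$): a soliton, which is positive everywhere. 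A positive function of that shape cannot live on a graph with infinitely many vertices carrying Kirchhoff conditions. More simply, equality in the rearrangement requires exactly two (resp.\ one) preimages almost everywhere. That fails on a periodic graph, since at least one bounded cycle or extra edge carries positive measure. So the level is not attained.

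\textbf{Case (iii).} This is the main obstacle: attainment for $\mu\in(\mu_{\cG},\mu_\R]$ when $\mu_{\cG}<\mu_\R$. The steps are:
\begin{enumerate}
\item Negativity of $\cE_6(\mu)$ for $\mu>\mu_{\cG}$ follows by testing near-extremals of $C_{6,\cG}$ rescaled to mass $\mu$.
\item Take a minimizing sequence. Lemma \ref{lem: mod GN}, valid for $\mu\le\mu_\R$, gives boundedness of $\|u_n'\|_2$: the coefficient $3((\mu-\theta)/\mu_\R)^2\le3$ leaves at most a lower-order term, and a separate argument handles $\mu=\mu_\R$ via $\theta_u$.
\item Using periodicity, translate each $u_n$ by the $\mathbb{Z}$-action so that its maximum lies in the fundamental domain $K$. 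This prevents vanishing, because $\cE_6<0$ forces $\|u_n\|_\infty\not\to0$.
\item Extract a weak limit $u\ne0$. Exclude dichotomy by strict subadditivity $\cE_6(\mu)<\cE_6(m)+\cE_6(\mu-m)$, which comes from the scaling $\cE_6(t\mu)\le t\,\cE_6(\mu)$ together with negativity. Strong convergence then follows.
\end{enumerate}
At $\mu=\mu_{\cG}$, take weak limits of minimizers as $\mu\downarrow\mu_{\cG}$, or of maximizing sequences for $C_{6,\cG}$, which compactify by the same translation argument. Nonexistence outside $[\mu_{\cG},\mu_\R]$ is immediate: below $\mu_{\cG}$ the level is $0$ and equality in \eqref{GN} would force $\|u'\|_2=0$; above $\mu_\R$ the level is $-\infty$. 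The delicate points are controlling $\theta_u$ at $\mu=\mu_\R$, and ensuring that concentration (blow-up of $\|u_n'\|_2$) does not occur there; Lemma \ref{lem: mod GN} is designed precisely for this.
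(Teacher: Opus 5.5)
The paper gives no proof of this statement; it is quoted from \cite{periodic}, which adapts \cite{critical}, and your outline follows that route. Most of it is sound: the bounds on $\mu_{\cG}$, the values $0$ and $-\infty$ of $\cE_6$, $\mu_{\cG}=\mu_{\R}$ under $(H)$, and existence for $\mu\in(\mu_{\cG},\mu_{\R}]$. One small correction there: $\cE_6(\theta\mu)\le\theta\,\cE_6(\mu)$ alone only gives the non-strict subadditivity inequality. For \emph{strict} subadditivity use $E_6(\sqrt{\theta}v)=\theta E_6(v)-\frac{\theta^3-\theta}{6}\|v\|_6^6$.

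\textbf{The gap: existence at $\mu=\mu_{\cG}$ in (iii).} At this mass both mechanisms you rely on disappear.
\begin{itemize}
\item $\cE_6(m)=0$ for every $m\le\mu_{\cG}$, so strict subadditivity fails.
\item With zero energy, nothing forces $\|u_n\|_\infty\not\to0$, so translating the maximum into $K$ yields no nonzero limit.
\end{itemize}
A ground state of mass $\mu_{\cG}$ is exactly an extremal of $C_{6,\cG}$. Both of your candidate sequences are maximizing sequences for $C_{6,\cG}$: this is clear for the first, and for ground states $u_\mu$ with $\mu\downarrow\mu_{\cG}$ it holds because $E_6(u_\mu)<0$ means their quotient exceeds $3/\mu^2$. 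Such sequences may a priori spread out: $\|v_n\|_\infty\to0$, and then $\|v_n\|_6^6\le\|v_n\|_\infty^4\mu$ forces $\|v_n'\|_2\to0$ too.

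Concentration is excluded by Lemma \ref{lem: mod GN} together with $\mu_{\cG}<\mu_{\R}$. Splitting is excluded by applying Brezis--Lieb to the quotient itself (strict superadditivity of $m\mapsto m^2$). Spreading, however, is not excluded by Lemma \ref{lem: mod GN}, because its remainder $C_{\cG}\theta_u^{1/2}$ is of order one, not $o(\|v'\|_2^2)$. What you need is a scale-invariant bound
\[
\|v\|_6^6\le C_{6,\R}\|v\|_2^4\|v'\|_2^2+o(\|v'\|_2^2)\qquad\text{as }\|v'\|_2\to0\text{ with }\|v\|_2^2=\mu_{\cG},
\]
which, together with $C_{6,\R}<C_{6,\cG}$, rules out spreading. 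One way to prove it:
\begin{itemize}
\item By periodicity, every point lies within a fixed distance $D$ of a bi-infinite simple path.
\item Hence every level below $\|v\|_\infty-\sqrt{D}\|v'\|_2$ has at least two preimages.
\item Rearrange $\min\{|v|,\|v\|_\infty-\sqrt{D}\|v'\|_2\}$ symmetrically on $\R$; using \eqref{GN_infty}, the error is bounded by $C\|v'\|_2^{5/2}$.
\end{itemize}

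\textbf{Non-attainment at the threshold in (i)--(ii).} This argument also needs repair.
\begin{itemize}
\item Your first justification is false. $\R$ with vertices at the integers is a $1$-periodic graph satisfying $(H)$, and the soliton of mass $\mu_{\R}$ is a ground state on it. So the statement tacitly excludes $\cG\cong\R$, and the proof must use this.
\item Your second justification is too quick. A cycle does not by itself contradict having exactly two preimages a.e.: a level set cutting a ladder transversally meets it twice.
\end{itemize}
What works is the following. A ground state is positive everywhere (Euler--Lagrange equation, Kirchhoff conditions, ODE uniqueness). So for small $t$ the set $\{u>t\}$ contains a cell, is bounded, and must be separated by $\{u=t\}$ from both ends of $\cG$. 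This already gives two preimages for all small $t$, hence a strict P\'olya--Szeg\H{o} inequality on $\R^+$, which settles (ii).

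For (i), suppose a.e.\ level had exactly two preimages. Since a cell separates the two ends, each preimage would have to lie on a bridge. But $(H)$ and $\cG\not\cong\R$ give non-bridge edges arbitrarily far away, and $u$ is non-constant on them. This produces a set of levels of positive measure with a third preimage, hence $\|\hat u'\|_2<\|u'\|_2$, a contradiction.
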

It is an open problem to determine if the equality $\mu_{\mathcal{G}}= \mu_{\mathbb{R}}$ can occur in this setting, and whether there is a ground state in this case (while cases in which $\mu_{\cG}<\mu_{\R}$ are known \cite{periodic}). 

\subsection{$2$-periodic case} 

When dealing with a $2$-periodic metric graph $\mathcal{G}$, the local $1$-dimensional nature of the single edges interacts with the global $2$-dimensional nature of the graph, due to the $2$-periodicity, leading to the so-called dimensional crossover. This was established for the square and the honeycomb grids in \cite{square_grid} and \cite{honeycomb}, respectively. 
In particular, in \cite{honeycomb} it has been observed that the strategy developed for the treatment of the square grid case \cite{square_grid} is rather flexible, and that essentially only two ingredients need to be adapted to more general settings: the proof of the two-dimensional Sobolev inequality (cf. \cite[Theorem 2.2]{square_grid}), and the construction of a suitable energy competitor, which allows one to show that the ground state energy level is strictly negative (cf. \cite[Proof of Theorem 1.1]{square_grid}). In what follows we establish the validity of these ingredients on general $2$-periodic graphs, as introduced at the beginning of Section \ref{sec: pre}, and we review the main steps in \cite{square_grid}. The starting point, as already mentioned is the following:

\begin{thm}[$2$-dimensional Sobolev inequality]
\label{SOBOLEV_2}
Let $\mathcal{G}$ be a $2$-periodic metric graph. Then, there exists a constant $C>0$ depending on $\cG$ such that
\begin{equation*}
    \|u\|_{L^2(\cG)} \leq C\|u'\|_{L^1(\cG)} \qquad \forall u \in W^{1,1}(\mathcal{G}).
\end{equation*}
\end{thm}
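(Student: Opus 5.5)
Our approach is to reduce the inequality to the discrete isoperimetric (Sobolev) inequality on $\mathbb{Z}^2$, using a fundamental domain $K$ and an action chosen as in Lemma \ref{fund_dom}.

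\emph{Decomposition into cells.} For $g\in\mathbb{Z}^2$ set $K_g:=g+K$. Two cells $K_g,K_h$ intersect only if $\|g-h\|_\infty\le1$, and every point lies in a bounded number $N$ of cells. Hence
\[
\sum_g \|u'\|_{L^1(K_g)}\le N\|u'\|_{L^1(\cG)} \qquad\text{and}\qquad \|u\|_{L^2(\cG)}^2\le \sum_g \|u\|^2_{L^2(K_g)} .
\]
On each compact connected cell (after enlarging $K$ if needed, we may assume $K$ is connected) we introduce the average $a_g:=|K|^{-1}\int_{K_g}u$. The one-dimensional estimate
\[
\|u-a_g\|_{L^\infty(K_g)}\le \|u'\|_{L^1(K_g)}
\]
holds, because the oscillation along any path inside the connected cell is controlled by the $L^1$ norm of $u'$.

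\emph{Neighbouring averages.} If $\|g-h\|_\infty=1$, then $K_g\cap K_h\neq\emptyset$ by \eqref{1conn}. Evaluating at a common point gives
\[
|a_g-a_h|\le \|u'\|_{L^1(K_g)}+\|u'\|_{L^1(K_h)} .
\]

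\emph{Discrete step.} The sequence $a=(a_g)$ belongs to $\ell^1(\mathbb{Z}^2)$, since $|a_g|\le |K|^{-1}\|u\|_{L^1(K_g)}$ and, for compactly supported $u$, only finitely many $a_g$ are nonzero; the general case follows by density. We now apply the discrete Sobolev inequality on $\mathbb{Z}^2$:
\[
\|a\|_{\ell^2}\le C\sum_{|g-h|=1}|a_g-a_h|.
\]
This is the discrete analogue of $\|f\|_{L^2(\R^2)}\le C\|\nabla f\|_{L^1}$, proved either by the Loomis--Whitney/Gagliardo argument or by a coarea/isoperimetric argument on $\mathbb{Z}^2$. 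Combining it with the neighbouring-average estimate yields $\|a\|_{\ell^2}\le C'\|u'\|_{L^1(\cG)}$.

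\emph{Conclusion.} We compare $u$ with $a_g$ on each cell:
\[
\|u\|_{L^2(K_g)}\le |K|^{1/2}|a_g|+|K|^{1/2}\|u'\|_{L^1(K_g)} .
\]
Summing the squares over $g$ gives
\[
\|u\|_2^2\le C\|a\|_{\ell^2}^2+C\sum_g\|u'\|_{L^1(K_g)}^2 .
\]
The last sum is bounded by $\bigl(\sum_g\|u'\|_{L^1(K_g)}\bigr)^2$, using $\ell^1\subset\ell^2$. This completes the argument.

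\emph{Main obstacle.} The step I expect to be hardest is proving the discrete Sobolev inequality on $\mathbb{Z}^2$ cleanly. I would follow Gagliardo's proof: for $a$ finitely supported, write
\[
|a_{(i,j)}|\le \sum_k|a_{(k+1,j)}-a_{(k,j)}|=:A_j ,\qquad |a_{(i,j)}|\le \sum_l|a_{(i,l+1)}-a_{(i,l)}|=:B_i .
\]
Then $\sum_{i,j}|a_{(i,j)}|^2\le \sum_{i,j} A_jB_i=(\sum_j A_j)(\sum_i B_i)$, and both factors are bounded by the total variation $\sum_{|g-h|=1}|a_g-a_h|$. A secondary point is verifying the bounded overlap $N$ and the reduction to compactly supported $u$, which uses density in $W^{1,1}(\cG)$ together with Fatou's lemma.
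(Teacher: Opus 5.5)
Your proof is correct, but it takes a different route from the paper's. The paper applies Gagliardo's product trick directly on the graph. It constructs arc-length paths $h,v$ that cover $K$ and join $x_0$ to $(1,0)+x_0$ and $(0,1)+x_0$, and glues their translates into bi-infinite curves sweeping the strips $H_j$, $V_i$. From these it derives $|u(x)|\le N\|u'\|_{L^1(H_j)}$ and $|u(x)|\le N\|u'\|_{L^1(V_i)}$ for $x\in(i,j)+K$, multiplies the two bounds, and sums, using that strips whose indices differ by at least $2$ are disjoint.

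You instead coarse-grain first. The cell averages $a_g$, the oscillation bound on each connected cell (via a simple path joining two points of $K_g$) and the neighbour estimate transfer the problem to the discrete inequality $\|a\|_{\ell^2}\le C\sum_{|g-h|=1}|a_g-a_h|$ on $\mathbb{Z}^2$. There the same product trick is carried out on the lattice, and the oscillation error is reabsorbed through $\ell^1\subset\ell^2$.

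What each route buys:
\begin{itemize}
\item Your version needs only that $K$ is connected and that unit-adjacent translates meet. It therefore avoids building covering paths inside the strips and counting multiple passes. It also extends verbatim to $d$-periodic graphs, yielding $\|u\|_{L^{d/(d-1)}(\cG)}\le C\|u'\|_{L^1(\cG)}$.
\item The paper's version is more direct, with no averaging step and no separate error term.
\end{itemize}

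Two minor points:
\begin{enumerate}
\item \eqref{1conn} is established inside the proof of Lemma \ref{fund_dom}, not in its statement. You should observe that it survives the passage to $\tilde K$ with the rescaled action, together with connectedness of $K$; this holds because $\tilde K$ is a union of unit-adjacent translates of $K$. Connectedness should be imposed before that construction, since enlarging $K$ afterwards could break \eqref{cond int 2}.
\item The density argument is superfluous. Since $a\in\ell^1(\mathbb{Z}^2)$, you already have $a_{(k,j)}\to0$ as $|k|\to\infty$ (and likewise in the other index), which is all the telescoping requires.
\end{enumerate}
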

\begin{proof}
Let $K$ be the fundamental domain for $\mathcal{G}$, with associated action $+$, given by Lemma \ref{fund_dom}. We set
\begin{equation*}
\label{H_j,V_i}  H_j:=\bigcup\limits_{z\in\mathbb{Z}}((z,j)+K) \quad \quad \text{and} \quad V_i:=\bigcup\limits_{z\in\mathbb{Z}}((i,z)+K);
\end{equation*}
these can be considered horizontal and vertical strips.

For a fixed $x_0 \in K$, we define two Lipschitz paths $h$ and $v$, parametrized by arc length, with the following properties:
\begin{itemize}
\item[($i$)] $h$ and $v$ connect $x_0$ with $(1,0)+x_0$ and $(0,1)+x_0$, respectively;
\item[($ii$)] the images of both $h$ and $v$ cover $K$;
\item[($iii$)] both $h$ and $v$ have finite length, denoted by $\ell_h$ and $\ell_v$, respectively;
\item[($iv$)] $h([0,\ell_h]) \subset H_0$ and $v([0,\ell_v]) \subset V_0$.
\end{itemize}
We point out that $h$ and/or $v$ are not necessarily injective: self-intersections and multiple passes along certain edges are allowed. %
Further, we consider the curves obtained by gluing together the horizontal and vertical translations of $h$ and $v$:
\begin{equation*}
\begin{split}
h_j:\mathbb{R}&\longrightarrow H_j \\
t&\longmapsto h_j(t):=(m,j)+h(t-m\ell_h) \quad \text{if $t \in [m\ell_h, (m+1)\ell_h]$ for some $m \in \mathbb{Z}$},
\end{split}
\end{equation*}
and 
\begin{equation*}
\begin{split}
v_i:\mathbb{R}&\longrightarrow V_i \\
t&\longmapsto v_i(t):=(i,n)+h(t-n\ell_v) \quad \text{if $t \in [n \ell_v, (n+1)\ell_v]$ for some $n \in \mathbb{Z}$}.
\end{split}
\end{equation*}
We observe that $h_j$ and $v_i$ are well defined and cover $H_j$ and $V_i$,  respectively. 

Now, let $u \in W^{1,1}(\cG)$. The integrals of $u$ and $u'$ over $H_j$ and $V_i$ differ from the integrals of $u\circ h_j$ and $u\circ v_i$ over $\mathbb{R}$, since some edges can be crossed multiple times by $h_j$ and $v_i$. However, the following estimates hold for any $p \in [1, +\infty)$ and $j,i \in \mathbb{Z}$:
\begin{itemize}
    \item since $h_j$ (resp. $v_i$) crosses a certain edge of $H_j$ (resp. $V_i$) a maximum of $N$ times, for some $N\in\mathbb{N}$,
    \begin{equation*}
        \|u\circ h_j\|_{L^p(\mathbb{R})}\leq N\|u\|_{L^p(H_j)} \qquad \text{(resp.  }  \|u\circ v_i\|_{L^p(\mathbb{R})}\leq N\|u\|_{L^p(V_i)});
    \end{equation*}
    \item since the curves $h_j$ and $v_i$ are parametrized by arc length, the same holds for the integrals of the derivative:
    \begin{equation}
    \label{stima_derivate}
        \|(u\circ h_j)'\|_{L^p(\mathbb{R})}\leq N\|u'\|_{L^p(H_j)}\text{ and } \|(u\circ v_i)'\|_{L^p(\mathbb{R})}\leq N\|u'\|_{L^p(V_i)}.
    \end{equation}
\end{itemize}
In particular, $u\circ h_j, u\circ v_i \in W^{1,1}(\mathbb{R})$. Moreover, since $\mathcal{G}\subseteq \bigcup_{j\in\mathbb{Z}}H_j$,
we can assert that 
\begin{equation}
\label{decomposizione}
    \|u\|_{L^2(\cG)}^2\leq \sum\limits_{j \in \mathbb{Z}}\int_{H_j}|u|^2\,dx,
    \end{equation}
   %
and similarly, since $H_j \subset \bigcup_i (i,j)+K$, 
\begin{equation}
\label{decomp_al_contrario 2}
 \|u\|_{L^2(H_j)}^2 \le    \sum\limits_{i \in \mathbb{Z}}\int_{(i,j)+K}|u|^2\,dx.
\end{equation}
Furthermore, since $H_a \cap H_b = \emptyset$ if $|a-b|\geq 2$ (and a similar property holds for the vertical strips), we also have
\begin{equation}
\label{decomp_al_contrario}
    \sum\limits_{j \in \mathbb{Z}}\int_{H_j}|u'|\,dx \leq 2  \|u'\|_{L^1(\cG)}\quad \text{and} \quad \sum\limits_{i \in \mathbb{Z}}\int_{V_i}|u'|\,dx \leq 2  \|u'\|_{L^1(\cG)}.
\end{equation}

Let us consider now $j$ fixed, and take any point $x \in H_j$. Since $h$ covers $K$, there exists $t \in \mathbb{R}$ such that $h_j(t)=x$, and by the Fundamental Theorem of Calculus 
\begin{equation}
\label{TFC}
    u(x)-u(h_j(r))=\int_r^t (u\circ h_j)'(s)\,ds,
\end{equation}
for every $r<t$. Since $u \in C_0(\mathcal{G})$ (that is, $u$ is continuous and tends to $0$ at infinity), by taking the limit as $r \to - \infty$ in \eqref{TFC} we obtain 
\begin{equation}
\label{83}
|u(x)|\leq \int_{-\infty}^t |(u\circ h_j)'(s)|\,ds \leq \int_{\mathbb{R}} |(u\circ h_j)'(s)|\,ds\leq N\int_{H_j}|u'|\,d\mu = N\|u'\|_{L^1(H_j)},
\end{equation}
where we used \eqref{stima_derivate}. Now, there exists $i \in \mathbb{Z}$ such that $x \in (i,j)+K$, therefore $x$ lies also on the path $v_i$. With the same arguments that lead to \eqref{83}, we conclude that 
\begin{equation}
\label{84}
    |u(x)|\leq N\|u'\|_{L^1(V_i)}.
\end{equation}
    Multiplying \eqref{83} and \eqref{84} term by term, we obtain that for all $x \in (i,j)+K$,
\begin{equation*}
\label{85}
    |u(x)|^2\leq N^2\|u'\|_{L^1(H_j)}\|u'\|_{L^1(V_i)},
\end{equation*}
whence it follows that
\[
\int_{(i,j)+K} |u|^2\,dx \le N^2 \ell(K) \|u'\|_{L^1(H_j)}\|u'\|_{L^1(V_i)},
\]
where $\ell(K)$ denotes the total length of $K$. At this point we take the sum over $i$: by \eqref{decomp_al_contrario 2} and \eqref{decomp_al_contrario}, we infer that
\begin{equation*}
\begin{split}
    \int_{H_j}|u|^2\,dx \leq N^2\ell(K)\|u'\|_{L^1(H_j)}\left(\sum\limits_{i \in \mathbb{Z}}\|u'\|_{L^1(V_i)}\right) \leq 2N^2\ell(K) \|u'\|_{L^1(H_j)} \|u'\|_{L^1(\cG)};
\end{split}
\end{equation*}
By further summing over $j$, and recalling \eqref{decomposizione} and \eqref{decomp_al_contrario}, we finally obtain
\begin{equation}
\label{81}
    \|u\|_{L^2(\cG)}^2 \le  \sum_{j \in \mathbb{Z}} \int_{H_j}|u|^2\,dx \leq 2N^2 \ell(K)^2\left(\sum_{j \in \mathbb{Z}} \|u'\|_{L^1(H_j)}\right)\|u'\|_{L^1(\cG)} \leq 4N^2 \ell(K)^2\|u'\|_{L^1(\cG)}^2,
\end{equation}
which is the desired result.
\end{proof}

Building on this result, we are able to study the existence of ground states in full generality. As a first consequence of Theorem \ref{SOBOLEV_2}, arguing exactly as in \cite[Theorem 2.3 and Corollary 2.4]{square_grid} we deduce the following inter-dimensional Gagliardo-Nirenberg inequality: for every $p \in [4,6)$ there exists an optimal constant $K_{p,\cG}>0$ such that
\beq\label{int GN}
\|u\|_{L^p(\cG)}^p \le K_{p,\cG} \|u\|_{L^2(\cG)}^{p-2} \|u'\|_{L^2(\cG)}^2 \qquad \forall u \in H^1(\cG).
\eeq
In turn, this allows to define a continuum of critical masses as in \cite[Definition 4.1]{square_grid} by
\begin{equation}
\label{critical mass p}
\mu_p=\mu_{p, \mathcal{G}}:=\left(\frac{p}{2K_{p,\mathcal{G}}}\right)^{\frac{2}{p-2}} \qquad \text{for every $p \in [4,6)$}.
\end{equation}

Now, thanks to the $1$-dimensional Gagliardo-Nirenberg inequality, Eq. \eqref{GN}, the ground state energy level $\cE_p(\mu,\cG) >-\infty$ for every $p \in (2,6)$ and $\mu>0$. For $p=6$, this property holds only for masses below a critical threshold. In both cases, the lower boundedness of $E_p$ in $H^1_\mu$ opens the possibility that a ground state exists in this range. To establish existence, it is therefore necessary to analyze the behavior of minimizing sequences. Owing to the periodicity of the setting, translation invariance rules out loss of compactness due to escape to infinity. Consequently, the only remaining obstruction to convergence is vanishing (of the sequence and of the energy), which occurs in case of dispersion along the graph. It follows that, if a function with negative energy exists, any minimizing sequence must be convergent, and hence a ground state is attained. This euristic idea is made rigorous by the following statement: 

\begin{lem}
\label{negative}
If $\mathcal{G}$ is a $2$-periodic graph, $2<p<6$ and $\mathcal{E}_p(\mu, \mathcal{G})<0$ for some $\mu>0$, then there exists a ground state of mass $\mu$ for $E_p$.
\end{lem}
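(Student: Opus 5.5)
We argue by concentration-compactness on a minimizing sequence, using the $\mathbb{Z}^2$-translation invariance to rule out escape to infinity. Let $\{u_n\}\subset H^1_\mu(\cG)$ be a minimizing sequence for $\cE_p(\mu,\cG)<0$. We may assume $u_n\ge 0$, replacing $u_n$ by $|u_n|$. By the one-dimensional Gagliardo--Nirenberg inequality \eqref{GN} with $p<6$, the functional $E_p$ is coercive on $H^1_\mu$, so $\{u_n\}$ is bounded in $H^1(\cG)$.

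\textbf{Step 1: non-vanishing.} We claim that $\liminf_n \|u_n\|_{L^\infty(\cG)}>0$. If instead $\|u_n\|_\infty\to 0$, then
\[
\|u_n\|_p^p\le \|u_n\|_\infty^{p-2}\mu\to 0,
\]
hence $\liminf E_p(u_n)\ge 0$, which contradicts $\cE_p(\mu)<0$. So there are points $x_n$ with $u_n(x_n)\ge c>0$. Fix the fundamental domain $K$ of Lemma \ref{fund_dom} and pick $g_n\in\mathbb{Z}^2$ with $x_n\in g_n+K$. Since each translation is a graph isometry, $v_n:=u_n(g_n+\cdot)$ is again a minimizing sequence, and $\max_K v_n\ge c$. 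Up to a subsequence, $v_n\rightharpoonup u$ weakly in $H^1$, and $v_n\to u$ locally uniformly on compact subgraphs, in particular on $K$. Hence $u\not\equiv 0$, and $m:=\|u\|_2^2\in(0,\mu]$.

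\textbf{Step 2: excluding dichotomy.} We show $m=\mu$. By Brezis--Lieb (pointwise a.e.\ convergence holds on each edge), with $w_n=v_n-u$ we have
\[
\|v_n\|_2^2=m+\|w_n\|_2^2+o(1),\quad E_p(v_n)=E_p(u)+E_p(w_n)+o(1).
\]
We use the scaling property: for $\sigma>1$ and $\|v\|_2^2=\nu$,
\[
E_p(\sqrt{\sigma}v)=\sigma E_p(v)-\tfrac{\sigma^{p/2}-\sigma}{p}\|v\|_p^p<\sigma E_p(v).
\]
This gives $\cE_p(\sigma\nu)\le\sigma\cE_p(\nu)$, with strict inequality whenever $\cE_p(\nu)$ is approached by functions with $L^p$ norm bounded away from $0$.

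Suppose $m<\mu$. Then $\|w_n\|_2^2\to\mu-m>0$ and
\[
E_p(w_n)\ge \frac{\mu-m}{\|w_n\|_2^2}\,E_p\!\left(\sqrt{\tfrac{\mu-m}{\|w_n\|_2^2}}\,w_n\right)+o(1)\ge\cE_p(\mu-m)+o(1)
\]
(the rescaling factor tends to $1$). Together with $E_p(u)\ge \cE_p(m)$, this yields $\cE_p(\mu)\ge\cE_p(m)+\cE_p(\mu-m)$.

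On the other hand, since $\cE_p(\mu)<0$, strict subadditivity gives $\cE_p(\mu)<\cE_p(m)+\cE_p(\mu-m)$. To see this, note that $\cE_p(\nu)\le 0$ for every $\nu$, by spreading test functions; I would verify this via dilations along the periodic structure, or simply by choosing $v$ of small amplitude and large support. The scaling inequality then gives $\cE_p(\theta\mu)\ge\theta\cE_p(\mu)$ for $\theta\in(0,1)$, with strict inequality where needed. Consider the two cases.
\begin{itemize}
\item If $\cE_p(m)<0$, we have the strict inequality $\cE_p(m)>\frac m\mu\cE_p(\mu)$, because minimizing sequences at negative level have non-vanishing $L^p$ norm. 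Combining it with $\cE_p(\mu-m)\ge\frac{\mu-m}{\mu}\cE_p(\mu)$ gives the contradiction.
\item If $\cE_p(m)=0$, then directly $\cE_p(m)+\cE_p(\mu-m)\ge \frac{\mu-m}{\mu}\cE_p(\mu)>\cE_p(\mu)$.
\end{itemize}
Hence $m=\mu$.

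\textbf{Step 3: conclusion.} With $m=\mu$ we get $v_n\to u$ in $L^2$. By the Gagliardo--Nirenberg inequality \eqref{GN} and $H^1$-boundedness, $v_n\to u$ also in $L^p$. Weak lower semicontinuity of $\|u'\|_2$ then gives $E_p(u)\le\liminf E_p(v_n)=\cE_p(\mu)$ with $u\in H^1_\mu$, so $u$ is a ground state.

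The main obstacle is Step 2: the careful proof of strict subadditivity, in particular handling the intermediate masses where $\cE_p$ may vanish. The argument above deals with this through the case split on the sign of $\cE_p(m)$, using only that $\cE_p(\mu)<0$.
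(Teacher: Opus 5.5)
Your proof is correct and follows essentially the route the paper relies on through its adaptation of \cite[Proposition 3.1]{periodic} and \cite[Proposition 3.3]{square_grid}: negative energy rules out $L^\infty$-vanishing, $\mathbb{Z}^2$-translations give a nonzero weak limit, and Brezis--Lieb together with the scaling/strict-subadditivity argument (the same scheme the paper describes in its defocusing dichotomy lemma) excludes $0<m<\mu$. Minor remark: the claim $\mathcal{E}_p(\nu)\le 0$ is not needed, since your second case works verbatim whenever $\mathcal{E}_p(m)\ge 0$.
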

This proposition is proved in \cite[Proposition 3.1]{periodic} for $1$-periodic graphs, and in \cite[Proposition 3.3]{square_grid} for the square grid. The proof can be easily adapted step by step to general $2$-periodic metric graphs.

At this point we are in position to state and prove our results regarding the homogeneous case which generalize \cite{square_grid, honeycomb}. As in those cases, we have to distinguish between $p \in (2,4)$, $p \in [4,6)$ and $p=6$. 

\begin{thm}
\label{sottocritico_2_periodico}
    If $\mathcal{G}$ is a $2$-periodic metric graph and $p \in (2,4)$, then $-\infty<\mathcal{E}_p(\mu, \mathcal{G})<0$ and a ground state exists for every $\mu>0$.
\end{thm}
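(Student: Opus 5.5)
The argument has three steps: show that $\mathcal{E}_p(\mu,\cG)$ is finite, show that it is strictly negative, and then invoke Lemma \ref{negative}, which turns strict negativity into existence of a ground state for $2<p<6$ on $2$-periodic graphs.

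\emph{Lower bound.} For $u\in H^1_\mu(\cG)$ the one-dimensional Gagliardo--Nirenberg inequality \eqref{GN} gives
\[
E_p(u)\ge \tfrac12\|u'\|_2^2-\tfrac{C_{p,\cG}}{p}\,\mu^{\frac{p+2}{4}}\|u'\|_2^{\frac{p-2}{2}}.
\]
Since $\frac{p-2}{2}<2$ for $p<6$, the right-hand side is bounded below in $\|u'\|_2$. Hence $\mathcal{E}_p(\mu,\cG)>-\infty$, and $E_p$ is coercive on $H^1_\mu(\cG)$.

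\emph{Negativity.} This is the main step, and it is where the two-dimensional character of $\cG$ and the condition $p<4$ are used. We will build a family of spread-out competitors that imitate a dilated two-dimensional profile. Embed $\cG$ in $\R^3$ so that the $\mathbb{Z}^2$-action is given by translations by two independent vectors, and let $\pi:\cG\to\R^2$ be the projection onto the plane they span; $\pi$ is Lipschitz on $\cG$. Fix a smooth compactly supported $\varphi\ge0$ on $\R^2$, $\varphi\not\equiv 0$, and set $w_\varepsilon(x)=\varphi(\varepsilon\pi(x))$. For small $\varepsilon$ the function $w_\varepsilon$ is nearly constant on each translated cell $g+K$, and a Riemann-sum argument over the cells gives
\[
\|w_\varepsilon\|_{L^r(\cG)}^r=\frac{\ell(K)}{|\det A|}\,\varepsilon^{-2}\bigl(\|\varphi\|_{L^r(\R^2)}^r+o(1)\bigr)\quad\text{for every } r\in[2,\infty),
\]
where $A$ is the matrix of the lattice vectors. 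Since the chain rule gives $|w_\varepsilon'|\le \varepsilon\|\nabla\varphi\|_\infty$ and the support of $w_\varepsilon$ meets $O(\varepsilon^{-2})$ cells, we also get $\|w_\varepsilon'\|_2^2=O(1)$. Now normalize: $u_\varepsilon=\sqrt{\mu}\,w_\varepsilon/\|w_\varepsilon\|_2$, so $\|w_\varepsilon\|_2^2\sim c\,\varepsilon^{-2}$. Then $\|u_\varepsilon'\|_2^2=O(\varepsilon^2)$, while $\|u_\varepsilon\|_p^p\sim c'\varepsilon^{-2}\cdot\varepsilon^{p}=c'\varepsilon^{p-2}$ with $c'>0$. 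Because $p<4$ gives $p-2<2$, the negative term dominates and $E_p(u_\varepsilon)<0$ for $\varepsilon$ small.

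The part that needs care is the uniform control of the cell-by-cell approximation. On each cell, the oscillation of $w_\varepsilon$ is at most $\varepsilon\,\mathrm{Lip}(\varphi)\,\mathrm{diam}_{\R^2}\pi(K)$, and this bound gives both Riemann-sum asymptotics. If one wants to avoid the Riemann sums, it suffices to prove the one-sided bounds $\|w_\varepsilon\|_2^2\le C\varepsilon^{-2}$ and $\|w_\varepsilon\|_p^p\ge c\varepsilon^{-2}$. The lower bound follows by restricting to the cells where $\varphi(\varepsilon\pi(\cdot))\ge\frac12\max\varphi$; there are at least $c\varepsilon^{-2}$ of them.

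\emph{Conclusion.} With $-\infty<\mathcal{E}_p(\mu,\cG)<0$ established for every $\mu>0$, Lemma \ref{negative} gives a ground state of mass $\mu$.
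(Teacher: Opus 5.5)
Your proof is correct and has the same skeleton as the paper's. The paper gets the lower bound from \eqref{GN}. It then builds a spread-out competitor of amplitude $\sim\varepsilon$ on $\sim\varepsilon^{-2}$ cells, giving $E_p\le C\varepsilon^2-c\,\varepsilon^{p-2}<0$ for $p<4$, and concludes with Lemma \ref{negative}.

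The only real difference is the competitor. The paper uses the proof of Theorem \ref{sottocritico bidimensionale combined} with $\alpha=0$. There it takes $u_n=\varepsilon_n d_{2n}/\max d_{2n}$, the normalized intrinsic distance to the complement of a box $B_{2n}$ of cells. This needs the special fundamental domain of Lemma \ref{fund_dom} to obtain the linear growth \eqref{2-dim maggiorazione} of the distance from $B_n$ to $\overline{B_{2n}^c}$. That growth is what gives both $|u_n'|\le C\varepsilon_n/n$ and $|u_n|\ge\delta\varepsilon_n$ on $B_n$.

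Your $\varphi(\varepsilon\pi(x))$ gets the gradient bound directly from the chain rule and the amplitude bound from continuity of $\varphi$. So you avoid the distance-growth estimate, and the analogy with dilating a planar profile is transparent. The price is that you need an equivariant map $\pi(g+x)=Ag+\pi(x)$ that is Lipschitz for the intrinsic metric of $\mathcal{G}$. This is not automatic for an arbitrary embedding in $\mathbb{R}^3$. It is easily arranged: prescribe $\pi$ on representatives of the finitely many vertex orbits, extend equivariantly, and interpolate linearly along edges.

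One small correction: your exact Riemann-sum constant $\ell(K)/|\det A|$ is only right if translates of $K$ overlap in null sets. This fails, for instance, for the enlarged $K$ of Lemma \ref{fund_dom}. The one-sided bounds you propose suffice instead. When counting cells for the lower bound on $\|w_\varepsilon\|_p^p$, use that each point of $\mathcal{G}$ lies in a uniformly bounded number of translates of $K$.
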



\begin{proof}
By the previous discussion, the thesis follows once that we find a function $u \in H^1_{\mu}(\cG)$ such that $E_p(u)<0$. Dealing with a general graph, this step does not follow directly from \cite{square_grid, honeycomb}. The existence of such a function can be obtained by proceeding as in the proof of Theorem \ref{sottocritico bidimensionale combined}, which will be given in the next section; in fact, the construction presented in that theorem can be viewed as a general case, and the present proof is recovered by choosing $\alpha=0$ therein. Thus, we refer to such proof for the details.
\end{proof}

\begin{rem}\label{rmk21}
We also obtain that $\cE_p(\mu, \cG) \le 0$ for every $\mu>0$ and $p \ge 4$ (see Corollary \ref{cor_combined} below).
\end{rem}

For $p \in [4,6]$, the dimensional crossover occurs: 
\begin{thm}
\label{teorema 2-sopracritico grafi 2-periodici}
Let $\mathcal{G}$ be a $2$-periodic metric graph. Then, for $p \in [4,6)$:
\begin{itemize}
\item[($i$)] if $\mu > \mu_{p,\mathcal{G}}$, then there exists a ground state, and $-\infty <\mathcal{E}_p(\mu)<0$;
\item[($ii$)] if $\mu < \mu_{p,\mathcal{G}}$, then there is no ground state and $\mathcal{E}_p(\mu)=0$;
\item[($iii$)] if $\mu=\mu_{p,\mathcal{G}}$, then $\mathcal{E}_p(\mu_{p,\mathcal{G}})=0$, and a ground state exists if $p \in (4,6)$.
\end{itemize}
\end{thm}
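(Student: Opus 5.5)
The plan is to follow the scheme of \cite[Section 4]{square_grid}. The two tools are the inter-dimensional Gagliardo-Nirenberg inequality \eqref{int GN} with its optimal constant $K_{p,\cG}$, and Lemma \ref{negative}. We also use Remark \ref{rmk21}, namely $\cE_p(\mu)\le 0$ for all $\mu>0$ when $p\ge4$. Its proof can be obtained from the competitor of Theorem \ref{sottocritico bidimensionale combined} with $\alpha=0$: one spreads a function over many periodicity cells, as a discretized two-dimensional bump $u_R(x)=R^{-1}\varphi(x/R)$ suitably lifted, with mass fixed and $\|u'\|_2^2\sim R^{-2}$, $\|u\|_p^p\sim R^{2-p}$. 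Then $E_p(u_R)\to0$ since $p>4$, and for $p=4$ we obtain $E_p\le o(1)$ after noting both terms scale like $R^{-2}$ and choosing $\varphi$ with small mass, or simply $\cE_4\le0$ by the same dilution.

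\textbf{Case $\mu<\mu_p$.} By \eqref{int GN} and \eqref{critical mass p}, for $u\in H^1_\mu$ we get
\[
E_p(u)\ge \tfrac12\|u'\|_2^2\Bigl(1-\tfrac{2K_{p,\cG}}{p}\mu^{\frac{p-2}{2}}\Bigr)=\tfrac12\|u'\|_2^2\Bigl(1-(\mu/\mu_p)^{\frac{p-2}{2}}\Bigr)>0 .
\]
Together with $\cE_p\le0$ this gives $\cE_p(\mu)=0$, and the level is not attained because every admissible $u$ has strictly positive energy ($u'\not\equiv0$, since constants are not in $L^2$ on an infinite graph). The same inequality with $\mu=\mu_p$ gives $E_p\ge0$, hence $\cE_p(\mu_p)=0$.

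\textbf{Case $\mu>\mu_p$.} By optimality of $K_{p,\cG}$, for any $\varepsilon>0$ there is $v$ with $\|v\|_p^p>(K_{p,\cG}-\varepsilon)\|v\|_2^{p-2}\|v'\|_2^2$. Normalizing $v$ to mass $\mu$ via multiplication by a constant gives $E_p(v)<\tfrac12\|v'\|_2^2\bigl(1-(K_{p,\cG}-\varepsilon)\tfrac{2}{p}\mu^{(p-2)/2}\bigr)<0$ for $\varepsilon$ small. Note that on a graph no dilation is available, but the amplitude scaling is enough here because \eqref{int GN} is homogeneous of the right degree in amplitude. Lemma \ref{negative} then yields a ground state, and $\cE_p>-\infty$ follows from \eqref{GN}.

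\textbf{Case $\mu=\mu_p$, $p\in(4,6)$.} This is the main obstacle: I would show that $K_{p,\cG}$ is attained by some $w$, after which normalizing $w$ to mass $\mu_p$ gives $E_p(w)=0=\cE_p(\mu_p)$. Take a maximizing sequence for the Weinstein-type quotient $\|u\|_p^p/(\|u\|_2^{p-2}\|u'\|_2^2)$, normalized so that $\|u_n\|_2=1$. The scaling freedom is lost on a graph, so we must also rule out $\|u_n'\|_2\to0$ or $\to\infty$.

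If $\|u_n'\|_2\to\infty$, then the one-dimensional inequality \eqref{GN} gives $\|u_n\|_p^p\lesssim\|u_n'\|_2^{(p-2)/2}=o(\|u_n'\|_2^2)$ since $p<6$, a contradiction. If $\|u_n'\|_2\to0$, then interpolating $L^p$ between $L^4$ and $L^6$ and using \eqref{int GN} at $p=4$ together with \eqref{GN} at $p=6$ gives $\|u_n\|_p^p\lesssim\|u_n'\|_2^{2+\delta}$ for some $\delta>0$ when $p>4$, again a contradiction. This is exactly where $p>4$ is needed. So $\|u_n'\|_2$ is bounded away from $0$ and $\infty$, and $\|u_n\|_p$ is bounded below.

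Next, a vanishing argument over cells shows $\sup_g\|u_n\|_{L^\infty(g+K)}\not\to0$: otherwise $\|u_n\|_p^p\le\|u_n\|_\infty^{p-2}\|u_n\|_2^2\to0$. Translating by the $\mathbb{Z}^2$ action, we may assume $u_n\rightharpoonup w\neq0$. Finally, a Brezis-Lieb splitting of $\|\cdot\|_2^2$, $\|\cdot\|_2^2$ for derivatives and $\|\cdot\|_p^p$, combined with the strict superadditivity coming from the exponent $(p-2)/2>1$ on the mass factor, shows that $u_n\to w$ strongly and that $w$ is a maximizer. Alternatively, one can work directly with minimizing sequences at mass $\mu_p$ and apply the same compactness argument.
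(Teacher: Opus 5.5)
Your proofs of (ii), of $\cE_p(\mu_p)=0$, and of (i) follow the paper's argument: the lower bound from \eqref{int GN}, then a near-maximizer of $K_{p,\cG}$ rescaled in amplitude, then Lemma \ref{negative}. For the existence part of (iii), your plan is also the right one: show that $K_{p,\cG}$ is attained using a maximizing sequence, $\mathbb{Z}^2$-translations and Brezis--Lieb. This is the route of \cite{square_grid}, to which the paper defers.

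However, the step you single out as ``exactly where $p>4$ is needed'' fails as written. H\"older gives $\|u\|_p^p\le\|u\|_4^{12-2p}\|u\|_6^{3(p-4)}$. Inserting \eqref{int GN} at exponent $4$ and \eqref{GN} at exponent $6$, the powers of $\|u'\|_2$ are $6-p$ and $p-4$, which sum to exactly $2$. You only recover \eqref{int GN} itself, $\|u\|_p^p\lesssim\|u\|_2^{p-2}\|u'\|_2^{2}$, with no $\delta>0$. No interpolation between these two endpoints can do better, since both are homogeneous of degree $2$ in $\|u'\|_2$. So your lower bound on $\|u_n'\|_2$ is unjustified, and with it the lower bound on $\|u_n\|_p$ and your non-vanishing step.

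The repair is to take $L^\infty$ rather than $L^6$ as the upper endpoint:
\[
\|u\|_p^p\le\|u\|_\infty^{p-4}\|u\|_4^4\le K_{4,\cG}\|u\|_\infty^{p-4}\|u\|_2^{2}\|u'\|_2^2,
\quad\text{that is,}\quad
Q_p(u)\le K_{4,\cG}\|u\|_\infty^{p-4}\|u\|_2^{4-p}.
\]
Together with \eqref{GN_infty}, this gives your estimate with $\delta=(p-4)/2$. More directly, it shows that a maximizing sequence with $\|u_n\|_2=1$ cannot satisfy $\|u_n\|_\infty\to0$, since then $Q_p(u_n)\to0\neq K_{p,\cG}$. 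This rules out both $\|u_n'\|_2\to0$ and dispersion with bounded $\|u_n'\|_2$ at once, and it is the estimate the paper uses in part (3) of Theorem \ref{teorema 2 sopracritico combined}.

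With this in place, your bound excluding $\|u_n'\|_2\to\infty$, the translation to a nonzero weak limit, and your Brezis--Lieb argument all go through.

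A minor point on the first paragraph: no separate discussion of $p=4$ or of ``small mass'' is needed for $\cE_p\le0$. Since $E_p(u)\le\frac12\|u'\|_2^2$ and the dilution sends $\|u'\|_2\to0$ at fixed mass, the bound holds for every $p$.
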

The problem of determining the existence of ground states for $p=4$ and $\mu=\mu_{4, \mathcal{G}}$ is open. 

\begin{proof}
The proof of this case is completely analogous to the one in \cite{square_grid}, and we only emphasize the key point: the interdimensional Gagliardo-Nirenberg inequality \eqref{int GN} gives the estimate
\[
E_p(u) \ge \frac12 \|u'\|_{L^2(\cG)}^2 \left(1-\left(\frac{\mu}{\mu_{p,\cG}}\right)^{\frac{p-2}{2}}\right) \qquad \forall u \in H^1_\mu(\cG).
\]
This, together with Remark \ref{rmk21}, implies that $\cE_p(\mu) = 0$ for $\mu \le \mu_{p,\cG}$, and that a ground state cannot exists when $\mu<\mu_{p,\cG}$. Namely, there exists an interval of masses for which the ground-state energy level is bounded from below, but not attained. Instead, for $\mu>\mu_{p,\cG}$, by considering a normalized maximizing sequence $\{v_n\}$ for the optimal value of the Gagliardo-Nirenberg constant $K_{p,\cG}$ in \eqref{int GN}, one obtains that $E_p(v_n)<0$ for sufficiently large $n$. Thus, Proposition \ref{negative} gives existence in such range of masses. We refer to \cite{square_grid} for the rest of the proof.
 \end{proof}

Finally, we consider $p=6$. In this framework, the techniques employed in proving Theorem \ref{1_periodic_critical} hold on $2$-periodic graphs and lead to an analogue result. We refer to \cite{square_grid} for the details.

\begin{thm}
\label{teorema critico grafi 2-periodici}
    Let $\mathcal{G}$ be a $2$-periodic metric graph. Then:
\begin{itemize}
    \item[($i$)] if $(H)$ is satisfied, then $\mu_{6,\mathcal{G}}=\mu_{\mathbb{R}}$, $\mathcal{E}_6(\mu, \mathcal{G})= \mathcal{E}_6(\mu, \mathbb{R})$ for all $\mu>0$, and the ground state level is never attained.
    \item[($ii$)] If $\mathcal{G}$ contains a terminal edge, then $\mu_{6,\mathcal{G}}=\mu_{\mathbb{R}^+}$, $\mathcal{E}_6(\mu, \mathcal{G})=\mathcal{E}_6(\mu, \mathbb{R}^+)$ for all $\mu>0$, and the ground state level is never attained.
    \item[($iii$)] If $\mathcal{G}$ violates $(H)$ and has no terminal edge, then \eqref{stima massa critica} and \eqref{valori inf energia} hold. Moreover, when $\mu_{6,\mathcal{G}}<\mu_{\mathbb{R}}$, the ground state exists if and only if $\mu \in [\mu_{\mathcal{G}}, \mu_{\mathbb{R}}]$. 
\end{itemize}
\end{thm}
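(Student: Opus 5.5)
We follow the scheme of Theorem \ref{1_periodic_critical} from \cite{periodic}, checking that each ingredient transfers to the $2$-periodic setting. The ingredients are the Gagliardo--Nirenberg inequalities \eqref{GN}, \eqref{GN_infty}, the modified inequality of Lemma \ref{lem: mod GN} (valid here by the remark after it), the invariance of $E_6$ under the $\mathbb{Z}^2$-action, and a concentration--compactness argument ruling out dichotomy and vanishing.

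\textbf{Unboundedness and upper bounds.} For every $\mu>0$ we first show $\mathcal{E}_6(\mu,\cG)\le \mathcal{E}_6(\mu,\R)$, by transplanting highly concentrated solitons. Under $(H)$ we place them on a long almost-disjoint double path through a point, obtained from the two infinite curves guaranteed by $(H)$. On a terminal edge we use half-solitons, which gives $\mathcal{E}_6(\mu,\cG)\le\mathcal{E}_6(\mu,\R^+)$. Since the soliton energy can be made arbitrarily negative above $\mu_\R$ (respectively $\mu_{\R^+}$) and tends to $0$ from below at the threshold, this yields $-\infty$ for $\mu>\mu_\R$ ($\mu>\mu_{\R^+}$). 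It also gives $\mathcal{E}_6\le 0$ always, via dispersion.

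\textbf{Lower bounds and the critical mass.} In case (i) we use the rearrangement argument of \cite{critical,periodic}. Assumption $(H)$ lets us compare any $u$ with its symmetric decreasing rearrangement on $\R$: every value of $u$ is attained at least twice, so the P\'olya--Szeg\H{o} inequality holds. Hence $C_{6,\cG}\le C_{6,\R}$, and with the upper bound this gives $\mu_{6,\cG}=\mu_\R$ and $\mathcal{E}_6(\mu,\cG)=\mathcal{E}_6(\mu,\R)$. In case (ii) we use the general bound $\mu_{\cG}\ge\mu_{\R^+}$ (decreasing rearrangement on the half-line) together with the terminal-edge upper bound. Nonexistence in cases (i) and (ii) follows as in \cite{critical}. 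If a minimizer $u$ existed at level $0$ with $\mu\le\mu_{6,\cG}$, it would be an optimizer of \eqref{GN} with $q=6$. Equality in P\'olya--Szeg\H{o} would then force $u^*$ to be a soliton and $u$ to have the corresponding symmetry on a line or half-line. This is incompatible with $\cG$, which is not isometric to $\R$ or $\R^+$, being $2$-periodic with infinitely many vertices of degree $\ge3$ (or containing a cycle). In case (iii), the absence of a terminal edge still gives the bound \eqref{stima massa critica}. For $\mu\le\mu_{\cG}$, the definition \eqref{critical mass} gives $E_6\ge\frac12\|u'\|_2^2(1-(\mu/\mu_\cG)^2)\ge0$. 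For $\mu\in(\mu_\cG,\mu_\R]$, a near-optimizer of \eqref{GN} rescaled in mass gives negative energy. For $\mu\le\mu_\R$, Lemma \ref{lem: mod GN} gives boundedness from below, while $\mu>\mu_\R$ gives $-\infty$ by the soliton construction. This yields \eqref{valori inf energia}.

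\textbf{Existence in case (iii), the main obstacle.} Take a minimizing sequence $u_n$ with $\mu\in(\mu_\cG,\mu_\R]$ and negative level. First, the sequence is bounded in $H^1$. For $\mu<\mu_\R$ this is immediate from Lemma \ref{lem: mod GN}, since $\theta_u\le\mu$ bounds the constant term. At $\mu=\mu_\R$ the lemma still gives $E_6(u_n)\ge\frac12\|u_n'\|^2(1-(1-\theta_n/\mu)^2)-C\theta_n^{1/2}$. If $\|u_n'\|\to\infty$, this forces $\theta_n\to0$ together with the energy estimate, and we must exclude it. This is the delicate step, since the sequence could concentrate like a soliton. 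We exclude it as in \cite{critical,periodic}: a concentrating sequence has energy converging to $\mathcal{E}_6(\mu,\R)=0$, contradicting the negative level. Next we translate by $\mathbb{Z}^2$ so that $\|u_n\|_{L^\infty(K)}$ stays bounded below; vanishing would give $\|u_n\|_6\to0$ and energy $\ge0$. We extract a weak limit $u\neq0$ and apply the Brezis--Lieb splitting. Using the strict subadditivity $\mathcal{E}_6(m)>\ldots$, derived from the scaling $\mathcal{E}_6(\tau\mu)\le\tau\,\mathcal{E}_6(\mu)$ for $\tau>1$ and from the fact that $\mathcal{E}_6(m)\ge0$ for $m\le\mu_\cG$, we get $\|u\|_2^2=\mu$ and strong convergence. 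At $\mu=\mu_\cG$ we instead use a maximizing sequence of \eqref{GN} at the critical exponent with mass $\mu_\cG$, and show compactness by the same translation-plus-splitting argument. Finally, nonexistence for $\mu<\mu_\cG$ follows from the strict positivity of the lower bound above whenever $u'\neq0$.
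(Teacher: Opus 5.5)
Your overall route is the one the paper intends. The paper gives no self-contained proof of this theorem; it only says that the arguments of \cite{periodic} (and \cite{critical,square_grid}) carry over to $2$-periodic graphs, and that is what you reconstruct. Most steps are fine, but the existence claim at the endpoint $\mu=\mu_{\cG}$ in case (iii) has a real gap.

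At $\mu=\mu_{\cG}$ the level is $\mathcal{E}_6(\mu_{\cG})=0$, and energy-minimizing sequences can vanish. For example, the dispersing functions built in the proof of Theorem \ref{sottocritico bidimensionale combined} have mass $\mu_{\cG}$, $E_6\to0$ and $\|u_n\|_\infty\to0$. So you are right to switch to a maximizing sequence for $C_{6,\cG}$, i.e. $\|u_n\|_2^2=\mu_{\cG}$ and $\|u_n\|_6^6/\|u_n'\|_2^2\to3$. But you then invoke ``the same translation-plus-splitting argument''. Its only non-vanishing mechanism was that vanishing forces energy $\ge0$, and at level $0$ this contradicts nothing. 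The one-dimensional inequalities \eqref{GN} and \eqref{GN_infty} do not help either: for small $\|u\|_\infty$ they only give $\|u\|_6^6/\|u'\|_2^2\le C\mu^2$, with no decay.

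The missing ingredient is the dimensional crossover. By \eqref{int GN} with $p=4$,
\[
\|u_n\|_6^6\le\|u_n\|_\infty^2\,\|u_n\|_4^4\le K_{4,\cG}\,\mu_{\cG}\,\|u_n\|_\infty^2\,\|u_n'\|_2^2 ,
\]
so $\|u_n\|_\infty\to0$ would send the quotient to $0$ rather than $3$. With this, the rest of your scheme closes:
\begin{itemize}
\item If $\|u_n'\|_2\to\infty$, Lemma \ref{lem: mod GN} gives $\|u_n\|_6^6/\|u_n'\|_2^2\le3(\mu_{\cG}/\mu_\R)^2+o(1)<3$. Hence the sequence is bounded in $H^1$ and $E_6(u_n)\to0$.
\item After a $\mathbb{Z}^2$-translation, $u_n\rightharpoonup u\neq0$.
\item A splitting $u_n=u+v_n$ with $m=\|u\|_2^2\in(0,\mu_{\cG})$ is impossible: eventually $E_6(v_n)\ge0$, while $E_6(u)\ge\frac12\|u'\|_2^2\bigl(1-(m/\mu_{\cG})^2\bigr)>0$.
\item Hence $m=\mu_{\cG}$ and $0\le E_6(u)\le\liminf E_6(u_n)=0$.
\end{itemize}

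Two smaller remarks.

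First, the step you call delicate at $\mu=\mu_\R$ is immediate. Your displayed bound gives $E_6(u_n)\ge-C\theta_n^{1/2}$, so a negative level forces $\theta_n\ge c>0$. This is exactly the argument of Proposition \ref{coercivity defocusing critical}.

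Second, in the non-existence argument for (i) and (ii), ``containing a cycle'' is not what excludes equality in P\'olya--Szeg\H{o}; the exceptional graphs of \cite{2015} contain cycles. What does exclude it is the following.
\begin{itemize}
\item A zero-energy ground state (after replacing $u$ by $|u|$) solves \eqref{eq:NLSE} with $\lambda\mu=2\|u'\|_2^2>0$, hence is strictly positive on all of $\cG$.
\item So for small $t$ the bounded set $\{u>t\}$ meets three pairwise disjoint connected unbounded strips, as in the proof of Theorem \ref{SOBOLEV_2}.
\item Each strip therefore contains a point of $\{u=t\}$, giving $\#\{u=t\}\ge3$ for all small $t$.
\end{itemize}
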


Also for $2$-periodic graphs, it is currently unknown whether the equality $\mu_{6,\mathcal{G}}=\mu_{\mathbb{R}}$ can occur and whether ground states may exist in case ($iii$).

\section{The focusing case}\label{sec: foc}


In this section we consider the NLS energy with combined nonlinearity \eqref{def E} with $\alpha>0$. We adapt and suitably combine the strategies developed in \cite{square_grid, periodic} (homogeneous case on periodic graphs) and \cite{combined} (inhomogeneous case on noncompact graphs with a finite number of edges). As discussed in the introduction, the critical mass $\tilde{\mu}_{\mathcal{G}}$ defined in \eqref{critical mass combined} plays a key role for the lower boundedness of $E_{p,q,\alpha}$ in the critical case $p=6$.


\begin{proof}[Proof of Proposition \ref{(un)bounded}]
If $2<q<p<6$, we can use the Gagliardo-Nirenberg inequality \eqref{GN}, whence we deduce that
\begin{equation}\label{coe tip}
E_{p,q,\alpha}(u) \geq \frac{1}{2}\|u'\|_2^2-\frac{C}{p}\|u\|_2^{\frac{p}{2}+1}\|u'\|_2^{\frac{p}{2}-1}-\frac{\alpha}{q}C\|u\|_2^{\frac{q}{2}+1}\|u'\|_2^{\frac{q}{2}-1} \qquad \forall u \in H^1_\mu.
\end{equation}
This gives the lower boundedness and the coercivity of $E_{p,q,\alpha}$ in $H^1_{\mu}$.


Regarding the (un)boundedness in the case $p=6$, we distinguish between the cases when $\cG$ has a terminal point, or not. In the former case, the fact that $\cE_{p,q,\alpha}(\mu)=-\infty$ for every $\mu \ge \mu_{\R_+}=\tilde \mu_{\cG}$ can be proved as in \cite[Lemma 3.1]{combined}, without modifications. The lower boundedness and the coercivity for $\mu<\mu_{\R_+}$ follow directly from inequality \eqref{GN}. If instead $\cG$ has no terminal point, with $\mu <\mu_\R=\tilde \mu_\cG$, then the modified Gagliardo-Nirenberg inequality in Lemma \ref{lem: mod GN} gives
\begin{equation*}
\begin{split}
E_{6,q,\alpha}(u) &\geq \frac{1}{2}\|u'\|_2^2\left(1-\left(\frac{\mu-\theta_{u}}{\mu_{\mathbb{R}}}\right)^2\right)-\frac{C}{6}\theta_{u}^{\frac{1}{2}}-\frac{\alpha C}{q}\|u\|_2^{\frac{q}{2}+1}\|u'\|_2^{\frac{q}{2}-1} \\
& \geq \frac{1}{2}\|u'\|_2^2\left(1-\left(\frac{\mu}{\mu_{\mathbb{R}}}\right)^2\right)-\frac{C}{6}\mu^{\frac{1}{2}}-\frac{\alpha C}{q}\|u\|_2^{\frac{q}{2}+1}\|u'\|_2^{\frac{q}{2}-1}, 
\end{split}
\end{equation*}
for some $C>0$, which proves both coercivity and lower boundedness of $E_{6,q,\alpha}$ on $H^1_\mu$.

It remains to consider what happens for $\mu \ge \mu_\R=\tilde \mu_\cG$. If $\mu > \mu_\R$, it is sufficient to observe that $E_{p,q,\alpha}(u) \le E_{p,q,0}(u) = E_p(u)$, since $\alpha>0$. Therefore, taking the infimum over $u$, we deduce that
\[
\cE_{p,q,\alpha}(\mu) \le \cE_p(\mu) =-\infty \quad \forall \mu > \mu_\R.
\]
If $\mu=\mu_\R$, let $\phi$ be the even soliton for the problem on $\R$, and $\phi_\lambda(x):= \sqrt{\lambda}\phi(\lambda x)$. Let $\ell$ be the length of an edge $\mathrm{e} \in \cG$. We define
\[
u_\lambda(x) = \begin{cases} \frac{\mu_{\R}^{1/2}}{\|\phi_\lambda-\phi_\lambda(\ell)\|_{L^2(0,\ell)}} (\phi_\lambda(x)-\phi_\lambda(\ell)) & \text{if $x \in \mathrm{e}$} \\
0 & \text{if $x \in \cG \setminus \mathrm{e}$}.
\end{cases}
\]
With the same computations as in \cite[Lemma 3.1]{combined}, one obtains that $E_\alpha(u_\lambda,\cG) \to -\infty$ as $\lambda \to +\infty$.
\end{proof}

Key ingredients for the existence of ground states are represented by the following results, which are the natural counterparts of \cite[Lemma 3.2]{square_grid} and \cite[Proposition 3.3]{square_grid} (see also \cite[Proposition 3.1]{periodic}) in the present setting. The proofs follow the same arguments as in \cite{square_grid, periodic}, and hence are omitted.

\begin{lem}[Dichotomy]
\label{dich_combined}
Let $\mathcal{G}$ be a $1$ or $2$-periodic metric graph, and suppose that either $p<6$ and $\mu>0$, or $p=6$ and $\mu \in (0,\tilde{\mu}_{\mathcal{G}})$. Let also $\{u_n\}$ be a minimizing sequence for $E_{p,q,\alpha}$ in $H^1_{\mu}(\cG)$. Then $u_n\rightharpoonup u$ for some $u \in H^1(\cG)$, and either $u$ is a ground state of $E_{p,q,\alpha}$ in $H^1_{\mu}$, or $u \equiv 0$ and $\|u_n\|_{L^\infty(\cG)} \to 0$.
\end{lem}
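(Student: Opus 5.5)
The plan is to follow the concentration-compactness scheme of \cite[Lemma 3.2]{square_grid} and \cite[Proposition 3.1]{periodic}, with the two nonlinear terms handled separately. The whole argument rests on the following trichotomy for a minimizing sequence: either it vanishes in $L^\infty$, or, after translation, it converges strongly to a ground state, and partial loss of mass is excluded by a strict subadditivity-type inequality.

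\textbf{Boundedness and weak limit.} By Proposition \ref{(un)bounded}, in the stated range of $(p,\mu)$ the functional $E_{p,q,\alpha}$ is coercive on $H^1_\mu$. Hence $\{u_n\}$ is bounded in $H^1(\cG)$, and, up to a subsequence, $u_n\rightharpoonup u$ in $H^1$ and $u_n\to u$ in $L^\infty_{\rm loc}$.

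\textbf{Dichotomy through periodicity.} The statement should be read modulo the $\mathbb{Z}^d$-action, as in the cited papers. If $\|u_n\|_\infty\not\to0$, pick $x_n$ with $|u_n(x_n)|\ge c>0$ and translate by $g_n\in\mathbb{Z}^d$ so that $x_n$ lies in the fundamental domain $K$. Translations are isometries of the graph and leave the energy unchanged, so the translated sequence is still minimizing. Uniform $H^1$ bounds give equicontinuity, so local uniform convergence on the compact set $K$ yields $u\not\equiv0$. If instead $\|u_n\|_\infty\to 0$ and $u\neq 0$, we would have $|u(x)|=\lim|u_n(x)|=0$ for every $x$, a contradiction. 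So in the second alternative $u\equiv0$.

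\textbf{Ruling out partial mass loss.} Assume $u\neq0$ and set $m=\|u\|_2^2\in(0,\mu]$. Brezis--Lieb splitting for the $L^2$, $L^p$ and $L^q$ norms and for $\|u'\|_2^2$ gives
\[
E_{p,q,\alpha}(u_n)=E_{p,q,\alpha}(u)+E_{p,q,\alpha}(u_n-u)+o(1),\qquad \|u_n-u\|_2^2\to\mu-m.
\]
For $v\in H^1_\nu$ and $\sigma=\mu/\nu>1$, consider $\sigma v\in H^1_{\sigma^2\nu}$, or equivalently the scaling $\sqrt{\sigma}\,v$ of mass $\sigma\nu$. Then
\[
E_{p,q,\alpha}(\sqrt\sigma v)=\sigma\Big(\tfrac12\|v'\|_2^2-\tfrac{\sigma^{(p-2)/2}}{p}\|v\|_p^p-\tfrac{\alpha\sigma^{(q-2)/2}}{q}\|v\|_q^q\Big)\le \sigma E_{p,q,\alpha}(v),
\]
with strict inequality whenever $v\neq0$, because $\alpha>0$. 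This gives $\cE(\mu)\le\frac{\mu}{\nu}\cE(\nu)$, and strictly so along sequences whose $L^q$ norms stay away from zero.

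Assume $m<\mu$. Apply the bound to $u$, with the strict inequality since $u\ne0$, and to $u_n-u$, where only the non-strict inequality is available. This yields
\[
\cE(\mu)\ \ge\ \tfrac{m}{\mu}\,\cE(\mu)\cdot(\text{strict})+\tfrac{\mu-m}{\mu}\,\cE(\mu)+o(1),
\]
which is a contradiction. Here we use $\cE(\mu)\le 0$, obtained by spreading test functions, as in Corollary \ref{cor_combined}. So $m=\mu$, the convergence is strong in $L^2$, and by Gagliardo--Nirenberg also in $L^p$ and $L^q$. Weak lower semicontinuity of $\|u'\|_2$ then makes $u$ a minimizer.

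\textbf{Expected main obstacle: the case $p=6$ with $\mu<\tilde\mu_\cG$.} The scaling bound remains valid there. The delicate point is that the remainder $u_n-u$ has mass less than $\mu$, so its energy is controlled by the modified Gagliardo--Nirenberg inequality, Lemma \ref{lem: mod GN}. This keeps the argument inside the coercive regime, which is needed to guarantee that all the $o(1)$ terms are harmless.
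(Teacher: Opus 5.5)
Your proposal is correct and follows essentially the route the paper relies on. The paper omits the proof and defers to \cite{square_grid, periodic}, whose argument is the one you give: translation by the $\mathbb{Z}^d$-action to exclude escape to infinity, Brezis--Lieb splitting, and the scaling inequality $E_{p,q,\alpha}(\sqrt{\sigma}\,v)<\sigma E_{p,q,\alpha}(v)$ for $\sigma>1$, $v\neq0$, to exclude $0<\|u\|_2^2<\mu$; your reading of the statement modulo translations is indeed necessary, since taken literally it fails for translates of a ground state escaping to infinity. Your final paragraph is superfluous: the bound on the remainder needs only $\cE_{p,q,\alpha}(\mu)>-\infty$ and the $H^1$-boundedness from coercivity, so neither the modified Gagliardo--Nirenberg inequality nor the sign of $\cE_{p,q,\alpha}(\mu)$ is used, which matters because the lemma is later applied at $\mu=\mu_{p,q,\alpha}$, where the level is $0$.
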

\begin{proposition}
\label{negative_combined}
Let $\mathcal{G}$ be a $1$ or $2$-periodic metric graph, and suppose that either $p<6$ and $\mu>0$, or $p=6$ and $\mu \in (0,\tilde{\mu}_{\mathcal{G}})$. If $\mathcal{E}_{p,q,\alpha}(\mu)<0$, then there exists a ground state of mass $\mu$.
\end{proposition}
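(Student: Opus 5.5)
The plan is to deduce Proposition \ref{negative_combined} from the Dichotomy Lemma \ref{dich_combined}. We only need to rule out the vanishing alternative when the infimum is strictly negative.

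Take a minimizing sequence $\{u_n\}\subset H^1_\mu(\cG)$ for $E_{p,q,\alpha}$. By Proposition \ref{(un)bounded} in the focusing case, or Proposition \ref{(un)bounded_defocusing} when $\alpha<0$ and $p<6$, the functional is coercive on $H^1_\mu$ under the stated assumptions. Hence $\{u_n\}$ is bounded in $H^1(\cG)$, say $\|u_n'\|_2\le M$. For $p=6$ and $\mu<\tilde\mu_\cG$ this bound follows from the modified Gagliardo--Nirenberg estimate, exactly as in the proof of Proposition \ref{(un)bounded}. Lemma \ref{dich_combined} then yields a weak limit $u$: either $u$ is a ground state, in which case we are done, or $u\equiv 0$ and $\|u_n\|_{L^\infty}\to 0$.

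We exclude the second alternative by interpolation. For any $r\in\{q,p\}$ with $r>2$,
\[
\|u_n\|_r^r\le \|u_n\|_\infty^{r-2}\|u_n\|_2^2=\mu\,\|u_n\|_\infty^{r-2}\to 0 .
\]
Therefore
\[
\liminf_n E_{p,q,\alpha}(u_n)\ge \liminf_n\left(\tfrac12\|u_n'\|_2^2-\tfrac1p\|u_n\|_p^p-\tfrac{\alpha}{q}\|u_n\|_q^q\right)\ge 0 ,
\]
because both nonlinear terms vanish whatever the sign of $\alpha$ and the kinetic term is nonnegative. This contradicts $E_{p,q,\alpha}(u_n)\to\mathcal{E}_{p,q,\alpha}(\mu)<0$. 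So vanishing is impossible and $u$ is a ground state of mass $\mu$.

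The substance lies in Lemma \ref{dich_combined}, which we take as given. If one wanted to verify it, the main obstacle would be the non-vanishing step. One uses periodicity: translate each $u_n$ by elements of $\mathbb{Z}^d$ so that points where $|u_n|$ is nearly maximal lie in a fixed fundamental domain $K$. Compactness of the embedding $H^1(K)\hookrightarrow L^\infty(K)$ then gives a nontrivial weak limit whenever $\|u_n\|_\infty\not\to0$.

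One must then show that no mass is lost. Write $u_n=u+w_n$ and apply Brezis--Lieb splitting to the kinetic term and both nonlinear terms. Then use strict subadditivity of $\mu\mapsto\mathcal{E}_{p,q,\alpha}(\mu)$, which follows from the scaling inequality $\mathcal{E}_{p,q,\alpha}(\theta\mu)<\theta\mathcal{E}_{p,q,\alpha}(\mu)$ for $\theta>1$ when the level is negative. This scaling works since $p,q>2$ and $\alpha>0$; for $\alpha<0$ it requires separate care. It follows that $\|u\|_2^2=\mu$, and weak lower semicontinuity gives minimality.
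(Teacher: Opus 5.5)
Your argument is correct and matches what the paper intends. The paper omits the proof and defers to the homogeneous square-grid and $1$-periodic arguments, which proceed exactly as you do: apply the dichotomy Lemma \ref{dich_combined} to a bounded minimizing sequence, then exclude vanishing because $\|u_n\|_\infty\to0$ forces $\|u_n\|_r^r\le\mu\|u_n\|_\infty^{r-2}\to0$ for $r=p,q$, hence $\liminf_n E_{p,q,\alpha}(u_n)\ge0>\mathcal{E}_{p,q,\alpha}(\mu)$ (one cosmetic point: for $p=6$ on graphs with a terminal point the $H^1$ bound comes from \eqref{GN}, not from Lemma \ref{lem: mod GN}, which assumes no terminal points).
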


\subsection{On 1-periodic graphs}
To prove the existence of a ground state on $1$-periodic graphs, we show the existence of a function $u \in H^1_{\mu}(\cG)$ such that $E_{p,q,\alpha}(u)<0$, and then apply Proposition \ref{negative_combined}. The construction of the competitor relies on the special choice of the fundamental domain $K$ given by Lemma \ref{fund_dom} (in fact, for $1$-periodic graphs we could consider the sequence of functions $\{u_n\}$ constructed in \cite[Theorem 1.1]{periodic}; but we present here a different sequence, whose construction can be easily generalized in broader settings).

%
%

\begin{proof}[Proof of Theorem \ref{sottocritico_combined}]
We seek a function in $H^1_{\mu}$ with negative energy.  
Let $K$ be the fundamental domain of $\mathcal{G}$ obtained in Lemma \ref{fund_dom}. For all $n \in \mathbb{N}$, define
    \begin{equation*}
        B_n := \bigcup\limits_{|z|\leq n} \left(K+z\right).
    \end{equation*}
Letting $l(B_n)$ be the lenght of $B_n$, namely be the sum of the lengths of its edges, we note that $l(B_n) \asymp n$ for $n \rightarrow +\infty$, that is there exist $C_2\geq C_1>0$ such that, for $n$ sufficiently large,
\begin{equation}\label{l(B_n)}
    C_1 \leq \frac{l(B_n)}{n} \leq C_2.
\end{equation}
Indeed $l(B_n)\leq (2n+1)l(K)$, the last term being the sum of the lengths of all the copies of $K$ in $B_n$, and $l(B_n)\geq nl(K)$, as the translated copies $(g + K)$ are disjoint for $|g| \geq 2$, and thus at least $n$ of them fit in $B_n$ without overlapping.\\
Using the distance function on $\mathcal{G}$, that is, the shortest path distance, we define for all $n \in \mathbb{N}$:
    \begin{equation*}
    \begin{split}
d_n:\mathcal{G}&\longrightarrow\mathbb{R^+}\\
x &\longmapsto d(x,\overline{B_n^c}),
\end{split}
    \end{equation*}
the distance from $x$ to the topological closure of the complement of $B_n$. Observe that, since it is given by the distance function, $d_n$ is continuous and Lipschitz on $\mathcal{G}$, of Lipschitz constant $1$. 
For all $n,m \in \mathbb{N}$ such that $m-n$ is sufficiently large, we claim that
\begin{equation}
\label{maggiorazione}
    d(B_n, \overline{B_m^c}) \geq (m-n)C,
\end{equation}
for some constant $C>0$. Indeed, any curve $\gamma$ from $B_n$ to $B_m^c$ starts in $h_1+K$ and ends in $h_2+K$, with $|h_1|\leq n$ and $|h_2|\geq m+1$. Without loss of generality, let $h_1,h_2\geq 0$. Since $(g+K)\cap K \neq \emptyset$ only if $|g|\leq 1$, $\gamma$ has to cross all $t+K$ with $n\leq t\leq m+1$. Therefore, assuming that $m-n$ is even, we get:
    \begin{equation*} 
    \begin{split} 
        l(\gamma) &\geq d(n+K,n+2+K)+d(n+2+K,n+4+K)+\dots\\
        & \quad+d(m-2+K,m+K) =\frac{m-n}{2} d(K,2+K) \geq C(m-n),
        \end{split}
    \end{equation*} 
for $m-n$ sufficiently large. Observe that the invariance of the length of the edges $\mathrm{e}$ under the action $+$ was used. An analogue inequality is obtained when $m-n$ is odd or if $h_1$, $h_2$ are not both positive. \\
At this point, we define
    \begin{equation}
    \label{u_n}
        u_n(x) := \frac{d_{2n}(x)}{\max\limits_{x\in B_{2n}}\left(d_{2n}(x)\right)}\varepsilon_n
    \end{equation}
    for $n\in \mathbb{N}$, where the constant $\varepsilon_n$ is chosen so that $\|u_n\|_2^2=\mu$.\\
The following inequalities occur:
\begin{enumerate}
\item \label{1} $\forall x \in \mathcal{G}, |u_n(x)|\leq \varepsilon_n$, since $ \frac{d_{2n}(x)}{\max\limits_{x\in B_{2n}}\left(d_{2n}(x)\right)}\leq 1$;
\item \label{2}$|u_n'(x)|\leq \frac{\varepsilon_n}{\max\limits_{x\in B_{2n}}\left(d_{2n}(x)\right)}$ almost everywhere, since $|{d_{2n}}'(x)|\leq 1$, as discussed before;
\item \label{3}$\max\limits_{x\in B_{2n}}\left(d_{2n}(x)\right)\leq (2n+1)l(K)$, since any curve $\gamma$ starting in $x\in (g+K)\subseteq B_{2n}$ can reach $\overline{B_{2n}^c}$ passing through $h+K$ for all $|g|\leq |h| \leq |2n|$. There are less than $2n+1$ of these indexes $h$, therefore $l(\gamma)\leq (2n+1)l(K)$;
\item \label{4}$\max\limits_{x\in B_{2n}}\left(d_{2n}(x)\right)\geq d(B_0,\overline{B_{2n}^c}) \geq 2Cn$, by \eqref{maggiorazione};
\item \label{5}for all $x \in B_n$, $|u_n(x)| \geq \delta\varepsilon_n$ for some $\delta > 0$ independent of $n$, since 
\begin{equation*}
    |u_n(x)| \geq \frac{d(B_n, \overline{B_{2n}^c})}{\max\limits_{x\in B_{2n}}\left(d_{2n}(x)\right)}\varepsilon_n \geq \frac{Cn}{(2n+1)l(K)}\varepsilon_n\geq \delta \varepsilon_n
\end{equation*}
for some $\delta>0$ and $n$ sufficiently large. Inequality \eqref{maggiorazione} and point \ref{3} were used.
\end{enumerate}
From these, we estimate:
\begin{itemize}
    \item $\forall \,\,2\leq p \leq \infty, \|u_n\|_p^p\asymp \varepsilon_n^p n$, since
    \begin{equation*}
        \|u_n\|_p^p\leq \varepsilon_n^p l(B_{2n})\leq C\varepsilon_n^pn,
    \end{equation*}
    having used \eqref{l(B_n)} and point \ref{1}, and
    \begin{equation*}
        \|u_n\|_p^p\geq \delta^p\varepsilon_n^pl(B_n)\geq C\varepsilon_n^pn,
    \end{equation*}
    for \eqref{l(B_n)} and point \ref{5}.
    \item $\varepsilon_n\asymp \frac{1}{\sqrt{n}}$, since $\varepsilon_n$ is chosen so that $\mu=\|u_n\|_2^2\asymp \varepsilon_n^2n$.
    \item $\|u_n'\|_2^2\leq \frac{C}{n^2}$ for some constant $C>0$, since 
    \begin{equation*}
        \|u_n'\|_2^2 \leq \frac{\varepsilon_n^2}{\left(\max\limits_{x\in B_{2n}}\left(d_{2n}(x)\right)\right)^2}l(B_{2n}) \leq \frac{C}{n^2},
    \end{equation*}
    having used \eqref{l(B_n)} and points \ref{2} and \ref{4}.
\end{itemize}
Therefore, for some constants $C_1, C_2, C_3>0$,
\begin{equation}
\label{108}
    E_{p,q,\alpha}(u_n)\leq \frac{C_1}{n^2}-\frac{C_2}{n^{\frac{p}{2}-1}}-\frac{\alpha C_3}{n^{\frac{q}{2}-1}},
\end{equation}
and as $q < p \leq 6$, the right-hand side of inequality \eqref{108} is negative for $n$ sufficiently large. Thus, Lemma \ref{negative_combined} gives the thesis.
\end{proof}

\subsection{On 2-periodic graphs}

Regarding $2$-periodic graphs, we still use Proposition \ref{negative_combined}, however distinguishing the different ranges of exponents arising from the dimensional crossover. 

\medskip

\noindent \textbf{The case $q \in (2,4)$.} This is the actual subcritical case, in the sense that $q$ is below the critical exponents of both dimensions $1$ and $2$. Exploiting this property, we are able to prove the negativity of $\cE_{p,q,\alpha}(\mu)$ independently of $\mu$, $\alpha$ and of the relation between $p$ and $q$.


\begin{proof}[Proof of Theorem \ref{sottocritico bidimensionale combined}]
We make use of a $2$-periodic adaptation of the sequence $\{u_n\}$ defined in Theorem~\ref{sottocritico_combined}. Again, we choose a fundamental domain $K$ that satisfies the requests of Lemma \ref{fund_dom} and we set
\begin{equation*}
\label{H_j,V_i}  H_j:=\bigcup\limits_{z\in\mathbb{Z}}((z,j)+K), \quad V_i:=\bigcup\limits_{z\in\mathbb{Z}}((i,z)+K), \quad
B_n := \bigcup\limits_{|z|\leq n} \left(K+z\right).
\end{equation*}
    We have that
    \begin{equation}
    \label{2-dim l(B_n)}
           l(B_n) \asymp n^2,
    \end{equation}
    since $l(B_n)\leq (2n+1)^2l(K)$, which is the sum of the lengths of all the copies of $K$ in $B_n$, and $l(B_n)\geq n^2l(K)$, since in $B_n$ there are at least $n^2$ disjoint copies of $K$, under the assumption that $(g+K)\cap K =\emptyset$ if $\|g\|_{\infty}\geq 2$. Moreover, for all $n,m \in \mathbb{N}$ such that $m-n$ is sufficiently large,
    \begin{equation}
    \label{2-dim maggiorazione}
        d(B_n, \overline{B_m^c}) \geq (m-n)C,
    \end{equation}
    for some $C>0$. Indeed, any curve $\gamma$ from $B_n$ to $B_m^c$ starts in $g+K$ and ends in $h+K$, with $\|g\|_{\infty}\leq n$ and $\|h\|_{\infty}\geq m+1$. Let $g=(g_1,g_2)$ and $h=(h_1,h_2)$, without loss of generality we can assume that $h_2\geq m+1$. Hence, $\gamma$ starts in $H_{g_2}$ and ends in $H_{h_2}$, and since $(t+K)\cap K \neq \emptyset$ only if $\|t\|_{\infty}\leq 1$, the curve $\gamma$ must cross all $H_i$ with $g_2 \leq i \leq h_2$. Therefore, assuming that both $g_2$ and $h_2$ are even or odd,
    \begin{equation*}
    \begin{split}
        l(\gamma) & \geq  d(H_{g_2},H_{g_2+2})+d(H_{g_2+2},H_{g_2+4})+\dots\\
        & \quad
       +d(H_{h_2-2},H_{h_2}) = \frac{h_2-g_2}{2} d(H_0,H_2) \geq C(m-n),
        \end{split}
    \end{equation*} 
for $m-n$ sufficiently large. An analogue estimate holds without the assumption that both $g_2$ and $h_2$ are even or odd.\\
We define now the sequence $\{u_n\}$ exactly as in \eqref{u_n}, namely
\begin{equation*}
u_n(x) := \frac{d_{2n}(x)}{\max\limits_{x\in B_{2n}}\left(d_{2n}(x)\right)}\varepsilon_n,
\end{equation*} 
with $d_n(x):=d(x,\overline{B_n^c})$ and $\varepsilon_n>0$ being such that $\|u_n\|_2^2=\mu$. Following the same arguments of points 1, 2, 3, 4 and 5 in Theorem \ref{sottocritico_combined}, we obtain analogous estimates for $\varepsilon_n$ and $E_{p,q,\alpha}(u_n)$. The exponents of these approximations differ from what we obtained for the $1$-periodic case, due to \eqref{2-dim l(B_n)} and \eqref{2-dim maggiorazione}:
\begin{itemize}
    \item $\|u_n\|_p^p\asymp \varepsilon_n^pn^2$, since $|u_n| \leq \varepsilon_n$ in $B_{2n}$, $|u_n| \geq \delta\varepsilon_n$ in $B_n$ for some $\delta>0$;
    \item $\varepsilon_n\asymp \frac{1}{n}$, since $\mu=\|u\|_2^2\asymp \varepsilon_n^2n^2$;
    \item $\|u'_n\|_2^2 \leq \frac{\varepsilon_n^2}{\left(\max\limits_{x\in B_{2n}}\left(d_{2n}(x)\right)\right)^2}l(B_{2n}) \leq \frac{\varepsilon_n^2}{\left(d(B_0,\overline{B_{2n}^c})\right)^2}l(B_{2n}) \leq \frac{C}{n^2}$, for some $C>0$, by \eqref{2-dim l(B_n)} and \eqref{2-dim maggiorazione}.
\end{itemize}
Therefore, there exist some constants $C_1, C_2, C_3>0$ such that:
\begin{equation}\label{st2912}
    E_{p,q,\alpha}(u_n)\leq \frac{C_1}{n^2}-\frac{C_2}{n^{p-2}}-\frac{\alpha C_3}{n^{q-2}}.
\end{equation}
Since $q<4$, this quantity is negative for $n$ sufficiently large. The thesis is obtained from Lemma \ref{negative_combined}.
\end{proof}

Observe that the same argument applies for any choice of $q$, $p$, $\mu$, $\alpha$; in particular, as already mentioned in the proof of Theorem \ref{sottocritico_2_periodico}, it works for $p \in (2,4)$ and $\alpha=0$. Moreover, estimate \eqref{st2912} gives the following:
\begin{cor}
\label{cor_combined}
    If $\mathcal{G}$ is a $2$-periodic metric graph, then $\mathcal{E}_{p,q,\alpha}(\mu)\leq 0$ for every $\mu>0$, $\alpha \in \R$ and $4 \leq q < p \leq 6$.
\end{cor}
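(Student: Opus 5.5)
The plan is to use the competitor sequence $\{u_n\}$ from the proof of Theorem \ref{sottocritico bidimensionale combined} together with estimate \eqref{st2912}, and to handle the sign of $\alpha$ separately.

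For every $\mu>0$, $\alpha\in\R$ and $4\le q<p\le 6$, the functions $u_n\in H^1_\mu(\cG)$ constructed there, namely
\[
u_n=\varepsilon_n\, d_{2n}\Big/\max_{B_{2n}} d_{2n},
\]
satisfy $\varepsilon_n\asymp 1/n$, $\|u_n'\|_2^2\le C/n^2$, and $\|u_n\|_r^r\asymp \varepsilon_n^r n^2\asymp n^{2-r}$ for every $r\ge2$. These bounds come from points 1--5 in the proof of Theorem \ref{sottocritico_combined}, adapted through \eqref{2-dim l(B_n)} and \eqref{2-dim maggiorazione}. None of this depends on $\alpha$, $p$ or $q$.

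\textbf{Energy bound.} We have
\[
E_{p,q,\alpha}(u_n)=\tfrac12\|u_n'\|_2^2-\tfrac1p\|u_n\|_p^p-\tfrac{\alpha}{q}\|u_n\|_q^q .
\]
The first term is at most $C_1 n^{-2}$ and the second term is nonpositive.

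\textbf{Case $\alpha\ge 0$.} The third term is also nonpositive, so $E_{p,q,\alpha}(u_n)\le C_1 n^{-2}$.

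\textbf{Case $\alpha<0$.} We use the upper bound $\|u_n\|_q^q\le \varepsilon_n^q\, l(B_{2n})\le C n^{2-q}$. This gives
\[
E_{p,q,\alpha}(u_n)\le C_1 n^{-2}+\tfrac{|\alpha|}{q}C n^{2-q}.
\]
Since $q\ge4$, we have $2-q\le -2<0$.

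\textbf{Conclusion.} In both cases $\limsup_n E_{p,q,\alpha}(u_n)\le 0$. Since each $u_n\in H^1_\mu(\cG)$, it follows that $\mathcal{E}_{p,q,\alpha}(\mu)\le \inf_n E_{p,q,\alpha}(u_n)\le 0$.

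The restriction $q\ge4$ is used only for the case $\alpha<0$; the case $\alpha\ge0$ works for any $q$. The one point to check carefully is that the two-sided bounds $\|u_n\|_r^r\asymp n^{2-r}$ hold for every $r\in[2,6]$. The upper bound, which is the only one needed here, follows directly from $|u_n|\le\varepsilon_n$ and $\operatorname{supp}u_n\subset B_{2n}$, so I expect no real obstacle.
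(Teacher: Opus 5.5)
Your proof is correct and follows the paper's argument: the paper reads the corollary off estimate \eqref{st2912} for the competitors $u_n$ of Theorem \ref{sottocritico bidimensionale combined}. Your separate treatment of $\alpha<0$, via the upper bound $\|u_n\|_q^q\le Cn^{2-q}$, makes explicit a sign issue that \eqref{st2912} glosses over (though $q\ge 4$ is not actually needed there, since $n^{2-q}\to 0$ for every $q>2$).
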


\medskip

\noindent \textbf{The case $4 \leq q < p \leq 6$.} In this range, the so-called dimensional crossover occurs and the existence of the ground state can be established only when the mass exceeds a critical value. It is convenient to recall the definition of the critical masses $\mu_{p,\cG}$ given in \eqref{critical mass} and \eqref{critical mass p}, which throughout this section we will simply denote by $\mu_p$. We write the energy functional $E_{p,q,\alpha}$ on $H^1_{\mu}(\cG)$ as
\begin{equation*}
    E_{p,q,\alpha}(u)= \frac{1}{2}\|u'\|_2^2\left(1-\frac{2}{p}Q_p(u)\mu^{\frac{p-2}{2}}-\frac{2\alpha}{q}Q_q(u)\mu^{\frac{q-2}{2}}\right),
\end{equation*}
where 
\beq\label{def quoz GN}
Q_r(u):=\frac{\|u\|_r^r}{\|u'\|_2^2\|u\|_2^{r-2}}.
\eeq
We then define
\begin{equation}
\label{decomp_combined}
    F_{p,q,\alpha}(u, \mu):=1-\frac{2}{p}Q_p(u)\mu^{\frac{p-2}{2}}-\frac{2\alpha}{q}Q_q(u)\mu^{\frac{q-2}{2}},
\end{equation}
and let 
\begin{equation*}
\mathcal{F}_{p,q,\alpha}(\mu):=\inf\limits_{u \in H^1_{\mu}(\cG)}F_{p,q,\alpha}(u, \mu)=\inf\limits_{u \in H^1(\cG)\setminus \{0\}}F_{p,q,\alpha}(u, \mu),
\end{equation*}
where the last equality is due to the invariance of $F_{p,q,\alpha}$ under the transformation of $u$ in $\lambda u$, for $\lambda \in\mathbb{R}$. \\
We prove the existence of a unique critical mass $\mu_{p,q,\alpha}=\mu_{p,q,\alpha,\cG} \in (0,+\infty)$ such that:
\begin{equation}
\label{massa critica}
    \begin{cases}
\mathcal{F}_{p,q,\alpha}(\mu)>0 \quad &\text{ if } \mu<\mu_{p,q,\alpha} \\
        \mathcal{F}_{p,q,\alpha}(\mu)=0 \quad &\text{ if } \mu=\mu_{p,q,\alpha} \\
        \mathcal{F}_{p,q,\alpha}(\mu)<0 \quad &\text{ if } \mu>\mu_{p,q,\alpha}. \\
    \end{cases}
\end{equation} 

We make use of the following lemma, analogous to \cite[Lemma 3.2]{combined}. The proof can be repeated essentially step by step, and hence is omitted.

\begin{lem}
\label{continua}
    For $p<6$, the function $\mathcal{F}_{p,q,\alpha}:[0, +\infty)\rightarrow \mathbb{R}$, extended as $1$ in $\mu=0$, is continuous. The same happens for $p=6$, for $\mathcal{F}_{6,q,\alpha}:[0, \tilde{\mu}_{\mathcal{G}})\rightarrow \mathbb{R}$.
\end{lem}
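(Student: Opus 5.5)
The plan is to regard $\mathcal{F}_{p,q,\alpha}$ as an infimum over $u\in H^1(\cG)\setminus\{0\}$ of a family of functions of $\mu$, namely
\[
\mu\mapsto F_{p,q,\alpha}(u,\mu)=1-\tfrac{2}{p}Q_p(u)\mu^{\frac{p-2}{2}}-\tfrac{2\alpha}{q}Q_q(u)\mu^{\frac{q-2}{2}} .
\]
Each of these is continuous (indeed concave and decreasing, since $\alpha>0$ and the exponents $\frac{p-2}{2},\frac{q-2}{2}\in[1,2]$ are at least $1$). An infimum of concave functions is concave, hence upper semicontinuous. Moreover, wherever $\mathcal{F}_{p,q,\alpha}$ is finite on an open interval, it is continuous there. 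So the strategy is to prove two things: a uniform bound on the quotients $Q_p(u)$ and $Q_q(u)$, which gives finiteness and local equicontinuity, and then continuity at $\mu=0$ with value $1$.

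\textbf{Case $p<6$.} Here $4\le q<p<6$, and I will invoke the inter-dimensional Gagliardo-Nirenberg inequality \eqref{int GN}. It gives $Q_r(u)\le K_{r,\cG}$ for $r\in\{p,q\}$ and every $u\neq0$. Consequently, for $\mu,\nu$ in a bounded interval $[0,M]$,
\[
|F_{p,q,\alpha}(u,\mu)-F_{p,q,\alpha}(u,\nu)|\le \tfrac{2}{p}K_{p,\cG}\bigl|\mu^{\frac{p-2}{2}}-\nu^{\frac{p-2}{2}}\bigr|+\tfrac{2\alpha}{q}K_{q,\cG}\bigl|\mu^{\frac{q-2}{2}}-\nu^{\frac{q-2}{2}}\bigr| ,
\]
uniformly in $u$. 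The family is therefore equicontinuous and uniformly bounded on $[0,M]$, and its infimum is continuous there. At $\mu=0$ every member equals $1$, which matches the extension $\mathcal{F}_{p,q,\alpha}(0)=1$.

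\textbf{Case $p=6$.} The obstruction is that $Q_6$ is not bounded by any inter-dimensional constant; only the one-dimensional inequality \eqref{GN} is available. Note that $Q_6(u)\mu^2\le C_{6,\cG}\mu^2=3(\mu/\mu_{6,\cG})^2$, so this bound alone only works for $\mu<\mu_{6,\cG}$, which may be strictly smaller than $\tilde\mu_{\cG}$. I would handle this as follows.

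The $q$-term is still uniformly controlled, since $q\ge4$ and \eqref{int GN} applies to it. Fix a compact interval $[0,M]\subset[0,\tilde\mu_{\cG})$. For $\mu\le M$, the modified Gagliardo-Nirenberg inequality (Lemma \ref{lem: mod GN}), applied to $u$ normalized to mass $\mu$, yields
\[
\tfrac13 Q_6(u)\mu^2\le \Bigl(\tfrac{\mu}{\mu_\R}\Bigr)^2+\frac{C\mu^{1/2}}{3\|u'\|_2^2}
\]
when $\cG$ has no terminal point. If $\cG$ has a terminal point, \eqref{GN} together with $\mu_{6,\cG}=\mu_{\R^+}=\tilde\mu_\cG$ gives a clean uniform bound directly.

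In the no-terminal-point case, I would split the infimum into two regimes:
\begin{itemize}
\item[(a)] functions with $\|u'\|_2^2$ large (mass normalized), for which $F$ is uniformly controlled as above;
\item[(b)] functions with $\|u'\|_2^2$ bounded, for which $\|u\|_6^6\le \|u\|_\infty^4\mu$ is bounded via \eqref{GN_infty}. This keeps $Q_6(u)\mu^2$ uniformly Lipschitz in $\mu$ on $[0,M]$, after rescaling $u$ to each mass.
\end{itemize}

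The step I expect to be the main obstacle is this $p=6$ equicontinuity near $\tilde\mu_{\cG}$. I would follow the argument of \cite[Lemma 3.2]{combined}: use the coercivity of Proposition \ref{(un)bounded} to restrict the infimum to functions with $\|u'\|_2$ large without loss of generality, and then show that the remainder term $C\mu^{1/2}/\|u'\|_2^2$ is negligible there.
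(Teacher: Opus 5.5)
\paragraph{Verdict.} Your argument for $p<6$ is correct. The paper gives no proof of this lemma and only points to \cite[Lemma 3.2]{combined}. Your $p=6$ part, however, rests on a misdiagnosed obstruction, and as written it does not close.

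\paragraph{The missing observation.} Inequality \eqref{GN} with exponent $6$ reads $\|u\|_6^6\le C_{6,\cG}\|u\|_2^4\|u'\|_2^2$. This is exactly
\[
Q_6(u)\le C_{6,\cG}=\frac{3}{\mu_{6,\cG}^{2}}\qquad\text{for every } u\in H^1(\cG)\setminus\{0\},
\]
so $Q_6$ is uniformly bounded. Your $p<6$ estimate therefore applies verbatim:
\[
|F_{6,q,\alpha}(u,\mu)-F_{6,q,\alpha}(u,\nu)|\le \tfrac13 C_{6,\cG}|\mu^2-\nu^2|+\tfrac{2\alpha}{q}K_{q,\cG}\bigl|\mu^{\frac{q-2}{2}}-\nu^{\frac{q-2}{2}}\bigr|,
\]
uniformly in $u$. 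This gives finiteness and local Lipschitz continuity of $\mathcal{F}_{6,q,\alpha}$ on all of $[0,+\infty)$; the restriction to $[0,\tilde\mu_{\cG})$ in the statement is just the range where the lemma is used.

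The threshold $\mu_{6,\cG}$ (or $\tilde\mu_\cG$) only decides whether $1-\frac13 Q_6(u)\mu^2$ stays positive, i.e.\ whether $E_{6,q,\alpha}$ is bounded below on $H^1_\mu$. It plays no role in equicontinuity. The slope of $\mu\mapsto F_{6,q,\alpha}(u,\mu)$ is controlled by $\sup_u Q_6(u)$, not by requiring $\frac13 Q_6(u)\mu^2<1$. Your own concavity remark would also have finished it, once finiteness is known from these two uniform bounds.

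\paragraph{Defects in the workaround you sketch.}
\begin{itemize}
\item In regime (b) you bound $\|u\|_6^6$. But $\frac13 Q_6(u)\mu^2=\frac13\|u\|_6^6/\|u'\|_2^2$ (with mass $\mu$) divides by $\|u'\|_2^2$, which may be small, so a bound on the numerator does not control the quotient. The correct use of \eqref{GN_infty} is $\|u\|_6^6\le\|u\|_\infty^4\|u\|_2^2\le C_{\infty,\cG}^4\|u\|_2^4\|u'\|_2^2$. That is again a bound on $Q_6$ uniform in $u$, and it makes the split into (a)/(b) unnecessary.
\item The split is not even a fixed partition of the family. Membership depends on $\mu$ through the mass normalization, while $Q_6$ itself does not depend on it.
\item The final step is unjustified: you propose to use the coercivity in Proposition \ref{(un)bounded} to restrict the infimum of $F(\cdot,\mu)$ to functions with $\|u'\|_2$ large ``without loss of generality''. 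Coercivity of $E_{6,q,\alpha}$ on $H^1_\mu$ says that minimizing sequences of $E$ stay bounded. It says nothing about where the infimum of the amplitude-invariant quotient $F(\cdot,\mu)$ is approached.
\item Lemma \ref{lem: mod GN} is available only for $\mu\le\mu_\R$ on graphs without terminal points. None of these case distinctions are needed.
\end{itemize}
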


This continuity result can be used to prove the existence of a zero for the function $\mu \mapsto \mathcal{F}_{p,q,\alpha}(\mu)$, using the Intermediate Value Theorem. Observe that 
\begin{equation*}
F_{p,q,\alpha}(u, \mu)=1-\frac{2}{p}Q_p(u)\mu^{\frac{p-2}{2}}-\frac{2\alpha}{q}Q_q(u)\mu^{\frac{q-2}{2}}<1 \qquad \forall u \in H^1_{\mu}(\cG),
\end{equation*}
whence 
\beq\label{30121}
\mathcal{F}_{p,q,\alpha}(\mu)<1 \qquad \text{for every $\mu$}.
\eeq 
We now prove a result related to the monotonicity of the function $\mathcal{F}_{p,q,\alpha}$.
\begin{lem}
\label{decreasing}
    If $p<6$, $\mu>0$ and $\theta>1$, then 
    \begin{equation}
    \label{8.9}
        \mathcal{F}_{p,q,\alpha}(\theta\mu)-1<\theta\left(\mathcal{F}_{p,q,\alpha}(\mu)-1\right).
    \end{equation}
    The same happens if $p=6$ and $\theta\mu<\tilde{\mu}_{\mathcal{G}}$.
\end{lem}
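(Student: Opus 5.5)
The plan is to prove \eqref{8.9} first for each fixed $u$, with the scaling exponents doing the work, and then pass to the infimum. Throughout this part of the section we have $4\le q<p\le 6$ and $\alpha>0$. Set
\[
A(u,\mu):=\frac{2}{p}Q_p(u)\mu^{\frac{p-2}{2}},\qquad B(u,\mu):=\frac{2\alpha}{q}Q_q(u)\mu^{\frac{q-2}{2}},
\]
so that $F_{p,q,\alpha}(u,\mu)-1=-A(u,\mu)-B(u,\mu)$. Since $Q_r$ does not depend on $\mu$,
\[
F_{p,q,\alpha}(u,\theta\mu)-1=-\theta^{\frac{p-2}{2}}A(u,\mu)-\theta^{\frac{q-2}{2}}B(u,\mu).
\]
For $\theta>1$ we have $\theta^{\frac{p-2}{2}}>\theta$ because $p>4$, and $\theta^{\frac{q-2}{2}}\ge\theta$ because $q\ge4$. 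Hence, for every $u$,
\[
F_{p,q,\alpha}(u,\theta\mu)-1\le\theta\bigl(F_{p,q,\alpha}(u,\mu)-1\bigr).
\]
For $p=6$ this requires $\theta\mu<\tilde\mu_{\cG}$, so that all the quantities involved are finite, in line with Lemma \ref{continua}.

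\emph{Case $q>4$.} Put $s:=\min\{\frac{p-2}{2},\frac{q-2}{2}\}>1$. Since $A,B\ge0$, for each $u$
\[
F_{p,q,\alpha}(u,\theta\mu)-1\le\theta^{s}\bigl(F_{p,q,\alpha}(u,\mu)-1\bigr).
\]
Taking the infimum over $u$ gives $\mathcal F_{p,q,\alpha}(\theta\mu)-1\le\theta^{s}(\mathcal F_{p,q,\alpha}(\mu)-1)$. By \eqref{30121}, $\mathcal F_{p,q,\alpha}(\mu)-1<0$, and $\theta^s>\theta$, so the right-hand side is strictly less than $\theta(\mathcal F_{p,q,\alpha}(\mu)-1)$. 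This proves \eqref{8.9} in this case.

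\emph{Case $q=4$.} Now the $B$ term scales exactly linearly, and strictness must come from the $p$-term. For each $u$,
\[
F_{p,q,\alpha}(u,\theta\mu)-1\le\theta\bigl(F_{p,q,\alpha}(u,\mu)-1\bigr)-\bigl(\theta^{\frac{p-2}{2}}-\theta\bigr)A(u,\mu).
\]
I take a minimizing sequence $u_n$ for $\mathcal F_{p,q,\alpha}(\mu)$. If $\liminf_n Q_p(u_n)>0$, then passing to the limit gives the strict inequality immediately. This step is the main obstacle: I must rule out that every minimizing sequence has $Q_p(u_n)\to0$ while $Q_4(u_n)$ stays bounded away from $0$.

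My first attempt at the obstacle is a contradiction argument. Suppose $Q_p(u_n)\to0$ along all minimizing sequences. Then
\[
\mathcal F_{p,q,\alpha}(\mu)=1-\frac{\alpha\mu}{2}\limsup_n Q_4(u_n)\ge 1-\frac{\alpha\mu}{2}K_{4,\cG}.
\]
Conversely, I would compare with a near-maximizing sequence $v_n$ for $K_{4,\cG}$, whose existence is guaranteed by \eqref{int GN}. For such $v_n$ I want to show that $Q_p(v_n)$ stays bounded below, using H\"older interpolation between $L^2$, $L^4$ and $L^p$ together with the one-dimensional inequality \eqref{GN}. This would yield $\mathcal F_{p,q,\alpha}(\mu)<1-\frac{\alpha\mu}{2}K_{4,\cG}$, a contradiction.

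If this comparison fails, the fallback is to exploit strict concavity of $\mu\mapsto\mathcal F_{p,q,\alpha}(\mu)$. This concavity holds because $\mathcal F_{p,q,\alpha}$ is an infimum of functions that are concave in $\mu$, strictly so through the $\mu^{\frac{p-2}{2}}$ term. Combined with the value $\mathcal F_{p,q,\alpha}(0)=1$ from Lemma \ref{continua}, it would give $\mathcal F_{p,q,\alpha}(\theta\mu)-1\le\theta(\mathcal F_{p,q,\alpha}(\mu)-1)$. In the degenerate situation where $\mathcal F_{p,q,\alpha}$ is linear on $[0,\theta\mu]$, I would then have to exclude equality.
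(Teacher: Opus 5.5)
Your reduction to the pointwise inequality $F_{p,q,\alpha}(u,\theta\mu)-1\le\theta\,(F_{p,q,\alpha}(u,\mu)-1)$ is the same scaling $u\mapsto\theta^{1/2}u$ that the paper uses. Your treatment of $q>4$ is correct. The uniform factor $\theta^{s}$, $s=\frac{q-2}{2}>1$, is more careful than the paper's proof: the paper writes a strict inequality for each term of a minimizing sequence and passes to the limit, which as written only yields ``$\le$''. (Finiteness of $\mathcal F_{p,q,\alpha}$ comes from $Q_r\le K_{r,\cG}$, resp.\ $Q_6\le C_{6,\cG}$, not from $\theta\mu<\tilde\mu_{\cG}$.)

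The genuine gap is $q=4$, where neither of your routes closes. The first route needs a near-maximizing sequence $v_n$ for $K_{4,\cG}$ with $Q_p(v_n)$ bounded below, and the tools you name cannot give this. H\"older interpolation yields only $Q_p(v)\ge Q_4(v)^{\frac{p-2}{2}}\bigl(\|v'\|_2/\|v\|_2\bigr)^{p-4}$, which degenerates when $v$ spreads out over the graph. Meanwhile \eqref{GN} with exponent $4$ gives $Q_4(v)\le C_{4,\cG}\|v\|_2/\|v'\|_2$, i.e.\ only an \emph{upper} bound on $\|v'\|_2/\|v\|_2$ for near-maximizers.

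In fact your claim is equivalent to $K_{4,\cG}$ being attained. Suppose $\|v_n\|_2=1$, $Q_4(v_n)\to K_{4,\cG}$ and $Q_p(v_n)\ge c>0$. Then $\|v_n\|_\infty^{p-4}\ge c/Q_4(v_n)$ and $\|v_n'\|_2$ is bounded, so after periodic translations $v_n\rightharpoonup v\neq0$. The Brezis--Lieb splitting of $\|v_n\|_4^4$, $\|v_n\|_2^2$, $\|v_n'\|_2^2$ then forces $v_n\to v$ in $H^1$ with $Q_4(v)=K_{4,\cG}$. That is exactly the existence of a ground state for $p=4$ at $\mu=\mu_{4,\cG}$, which the paper explicitly leaves open.

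The fallback adds nothing. Concavity of $\mathcal F_{p,4,\alpha}$ together with $\mathcal F_{p,4,\alpha}(0)=1$ reproduces the non-strict inequality you already have. The ``degenerate linear'' situation is the obstacle itself, not a side case. Write $1-\mathcal F_{p,4,\alpha}(\mu)=\mu\sup_u\bigl[\frac2pQ_p(u)\mu^{\frac{p-4}{2}}+\frac\alpha2Q_4(u)\bigr]$. One checks that \eqref{8.9} holds strictly for all admissible $\mu,\theta$ if and only if $\sup_{u}Q_p(u)/(K_{4,\cG}-Q_4(u))=+\infty$, with the ratio read as $+\infty$ when $Q_4(u)=K_{4,\cG}$. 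If this supremum is finite, then $\mathcal F_{p,4,\alpha}(\mu)=1-\frac\alpha2K_{4,\cG}\mu$ on an initial interval of masses, and \eqref{8.9} is an equality there. The supremum is infinite when $K_{4,\cG}$ is attained, but nothing in the paper provides it otherwise, and the paper's limiting argument does not address this point either.

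What you can and should prove is the non-strict inequality for $q=4$, keeping strictness for $q>4$ via your $\theta^s$ argument. This is all that is used afterwards: in Lemma \ref{massa critica esistenza}, strict monotonicity follows from $\mathcal F_{p,q,\alpha}(\mu_2)-1\le\frac{\mu_2}{\mu_1}\bigl(\mathcal F_{p,q,\alpha}(\mu_1)-1\bigr)<\mathcal F_{p,q,\alpha}(\mu_1)-1$, the last step by \eqref{30121}.
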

\begin{proof}
Let $\{u_n\}\subseteq H^1_{\mu}$ be a minimizing sequence for $\mathcal{F}_{p,q,\alpha}(\mu)$. Then $\theta^{\frac{1}{2}}u_n \in H^1_{\theta\mu}(\cG)$ is such that 
\begin{equation*}
\begin{split}
    \mathcal{F}_{p,q,\alpha}(\theta\mu) &\leq F_{p,q,\alpha}(\theta^{\frac{1}{2}} u_n, \theta \mu) = 1-\frac{2}{p}Q_p(u_n)\mu^{\frac{p-2}{2}}\theta^{\frac{p-2}{2}}-\frac{2\alpha}{q}Q_q(u_n)\mu^{\frac{q-2}{2}}\theta^{\frac{q-2}{2}} \\
    &< 1-\frac{2}{p}Q_p(u_n)\mu^{\frac{p-2}{2}}\theta-\frac{2\alpha}{q}Q_q(u_n)\mu^{\frac{q-2}{2}}\theta=1+\theta\left(F_{p,q,\alpha}(u_n, \mu)-1\right),
\end{split}
\end{equation*}
where the second inequality holds as $p,q \in [4,6]$. The thesis follows by taking the limit as $n \to \infty$.
\end{proof}

We are ready to prove the existence of a unique critical mass.
\begin{lem}
\label{massa critica esistenza}
    There exists a unique critical mass $\mu_{p,q,\alpha} \in (0, \mu_{p})$ satisfying \eqref{massa critica}. 
%
\end{lem}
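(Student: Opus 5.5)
The plan is to apply the Intermediate Value Theorem to the continuous function $\mu\mapsto \mathcal{F}_{p,q,\alpha}(\mu)$ of Lemma \ref{continua}, and to obtain uniqueness from the strict monotonicity estimate of Lemma \ref{decreasing}. At $\mu=0$ the function equals $1>0$. At $\mu=\mu_p$ we show it is negative, so a zero exists in $(0,\mu_p)$. Here $\mu_p=\mu_{p,\cG}$ is given by \eqref{critical mass p} for $p<6$, and $\mu_6=\mu_{6,\cG}\le \tilde\mu_{\cG}$ for $p=6$. The bound $\mu_{6,\cG}\le\tilde\mu_\cG$ follows from Theorems \ref{1_periodic_critical} and \ref{teorema critico grafi 2-periodici}, or directly from comparing Gagliardo--Nirenberg constants.

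\emph{Negativity at $\mu_p$.} Take a maximizing sequence $\{v_n\}$ for the optimal constant in \eqref{int GN} (or in \eqref{GN} with exponent $6$ when $p=6$), so that $Q_p(v_n)\to K_{p,\cG}$. With $\mu=\mu_p$ this gives $\frac{2}{p}Q_p(v_n)\mu_p^{\frac{p-2}{2}}\to 1$, by the definitions \eqref{critical mass p} and \eqref{critical mass}. For $p=6$ one checks $\frac{2}{6}C_{6,\cG}\mu_6^2=1$, since $Q_6=\|u\|_6^6/(\|u'\|_2^2\|u\|_2^4)$ matches \eqref{GN} for $q=6$.

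We then need the $q$-term to remain bounded away from zero along $v_n$. Setting $Q_q(v_n)\ge c>0$ would give $\mathcal{F}_{p,q,\alpha}(\mu_p)\le \limsup F_{p,q,\alpha}(v_n,\mu_p)\le -\frac{2\alpha}{q}c\,\mu_p^{\frac{q-2}{2}}<0$. This is the main obstacle, and we expect to handle it as follows. By interpolation, $\|v\|_p^p\le \|v\|_q^{q\theta}\|v\|_\infty^{\dots}$ or $\|v\|_p^p\le\|v\|_q^{a}\|v\|_{r}^{b}$ with $r$ between $p$ and $6$ (or $r=\infty$). Combined with \eqref{GN_infty} and \eqref{int GN}, a lower bound on $Q_p$ forces a lower bound on $Q_q$, up to normalizing $\|v_n\|_2=1$ and $\|v_n'\|_2=1$ by scaling invariance; note that $Q_r$ is not scale-invariant when $r\neq4$, so both dilation and amplitude need care.

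Alternatively, a simpler route avoids this. Use the strict version of Lemma \ref{decreasing}: for $\theta>1$ close to $1$, $\mathcal{F}_{p,q,\alpha}(\mu_p)\le \mathcal{F}_{p,q,0}$-type bounds. More concretely, $F_{p,q,\alpha}(u,\mu)<F_{p,q,0}(u,\mu)$ for every $u$ since $\alpha>0$. Hence $\mathcal{F}_{p,q,\alpha}(\mu_p)\le \mathcal{F}_{p,q,0}(\mu_p)=0$. If equality held, then continuity and Lemma \ref{decreasing} applied at masses slightly smaller than $\mu_p$ would give a contradiction: with $\theta=\mu_p/\mu$, we get $\mathcal{F}_{p,q,\alpha}(\mu_p)-1<\theta(\mathcal{F}_{p,q,\alpha}(\mu)-1)$. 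We would combine this with $\mathcal{F}_{p,q,\alpha}(\mu)\le \mathcal{F}_{p,q,0}(\mu)=1-(\mu/\mu_p)^{\frac{p-2}{2}}$. That yields only $\le$, though, so strictness really needs the $Q_q$ lower bound above. I would try the interpolation argument first. In any case, non-strict negativity already gives a zero in $(0,\mu_p]$.

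\emph{Uniqueness and the sign pattern \eqref{massa critica}.} Let $\mu_{p,q,\alpha}:=\inf\{\mu:\mathcal{F}_{p,q,\alpha}(\mu)\le 0\}$; it is positive by continuity at $0$, and $\mathcal{F}_{p,q,\alpha}(\mu_{p,q,\alpha})=0$. If $\mathcal{F}_{p,q,\alpha}(\mu)\le0$ and $\theta>1$, then Lemma \ref{decreasing} gives $\mathcal{F}_{p,q,\alpha}(\theta\mu)<1+\theta(\mathcal{F}_{p,q,\alpha}(\mu)-1)\le 1-\theta<0$. So once the function vanishes it is strictly negative at every larger admissible mass. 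Hence $\mathcal{F}>0$ on $(0,\mu_{p,q,\alpha})$, $\mathcal{F}=0$ at $\mu_{p,q,\alpha}$, and $\mathcal{F}<0$ beyond, which gives \eqref{massa critica}. In particular $\mathcal{F}_{p,q,\alpha}(\mu_p)<0$ whenever $\mu_{p,q,\alpha}<\mu_p$, so strictness at $\mu_p$ is exactly the point needing the obstacle step.
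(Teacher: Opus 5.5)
Your overall architecture is sound and matches the paper: continuity from Lemma \ref{continua}, strict decrease from Lemma \ref{decreasing}, and your definition of $\mu_{p,q,\alpha}$ as $\inf\{\mu:\mathcal{F}_{p,q,\alpha}(\mu)\le 0\}$, which gives the sign pattern \eqref{massa critica}.

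\textbf{The gap.} It is exactly the step you flag and leave open: the \emph{strict} inequality $\mathcal{F}_{p,q,\alpha}(\mu_p)<0$. Without it you only get $\mu_{p,q,\alpha}\le\mu_p$, while the statement claims $\mu_{p,q,\alpha}<\mu_p$. This strictness is used later, e.g.\ in \eqref{8.11}. Your comparison $F_{p,q,\alpha}<F_{p,q,0}$ only gives $\le 0$ after taking infima, as you note.

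\textbf{How the paper closes it.} It uses an \emph{optimizer} of the homogeneous problem at the threshold, not a maximizing sequence. Since $p>q\ge 4$, necessarily $p\in(4,6]$. For $p<6$, Theorem \ref{teorema 2-sopracritico grafi 2-periodici}(iii) gives $u_p\in H^1_{\mu_p}(\cG)$ with $E_p(u_p)=0$. Hence $E_{p,q,\alpha}(u_p)=-\frac{\alpha}{q}\|u_p\|_q^q<0$. For $p=6$ the paper splits cases. If $\cG$ violates $(H)$, has no terminal point and $\mu_{6,\cG}<\mu_\R$, then $E_6$ has a ground state at mass $\mu_{6,\cG}$ (Theorem \ref{teorema critico grafi 2-periodici}(iii)), and the same trick applies. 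In all other cases $\mu_{6,\cG}=\tilde\mu_{\cG}$, and Proposition \ref{(un)bounded} gives $\mathcal{E}_{6,q,\alpha}(\tilde\mu_{\cG})=-\infty$.

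\textbf{Your interpolation idea for $p<6$.} It can be completed, and it gives a more elementary argument that does not need existence at the threshold:
\begin{enumerate}
\item Normalize only $\|v_n\|_2=1$. On a graph there is no dilation, so you cannot also fix $\|v_n'\|_2$.
\item By \eqref{GN}, $Q_p(v_n)\le C_{p,\cG}\|v_n'\|_2^{(p-6)/2}$. Since $Q_p(v_n)\to K_{p,\cG}>0$ and $p<6$, $\|v_n'\|_2$ stays bounded.
\item Then \eqref{GN_infty} gives $\|v_n\|_\infty\le M$.
\item From $\|v_n\|_p^p\le\|v_n\|_\infty^{p-q}\|v_n\|_q^q$ you get $Q_q(v_n)\ge M^{q-p}Q_p(v_n)\ge c>0$ for large $n$.
\item Hence $\limsup_n F_{p,q,\alpha}(v_n,\mu_p)\le-\frac{2\alpha}{q}c\,\mu_p^{(q-2)/2}<0$.
\end{enumerate}

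\textbf{Why it fails for $p=6$.} The claimed mechanism, that a lower bound on $Q_6$ forces a lower bound on $Q_q$, is false. $Q_6$ gives no control on $\|v_n'\|_2$. For instance, when $\cG$ satisfies $(H)$, one has $C_{6,\cG}=C_{6,\R}$. Truncated solitons concentrating on an edge then form a maximizing sequence for $C_{6,\cG}$, and along it $Q_q(v_n)\le C_{q,\cG}\|v_n'\|_2^{(q-6)/2}\to 0$. In this regime strict negativity cannot come from a lower bound on $Q_q$. You need either a quantitative comparison between the deficit $1-\frac13 Q_6(v_n)\mu_6^2$ and $Q_q(v_n)$ along a specific competitor, or, as in the paper, the unboundedness result at $\tilde\mu_{\cG}$.

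\textbf{A minor point.} In this regime $\mu_6=\tilde\mu_{\cG}$ lies outside the interval on which Lemma \ref{continua} gives continuity. This is harmless: $\mathcal{F}_{6,q,\alpha}$ is an infimum of functions continuous in $\mu$, hence upper semicontinuous. So negativity at $\tilde\mu_{\cG}$ propagates to slightly smaller masses, and your $\inf$-construction still works.
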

\begin{proof}
Lemmas \ref{continua} and \ref{decreasing} imply that the function $\mathcal{F}_{p,q,\alpha}$ is continuous and strictly decreasing in $\mu$: indeed, if $\mu_1<\mu_2$ (and in addition $\mu_2<\tilde \mu_{\cG}$ in case $p=6$), applying \eqref{8.9} with $\theta = \mu_2/\mu_1>1$ we obtain
\begin{equation*}
    \mathcal{F}_{p,q,\alpha}(\mu_2)-1<\frac{\mu_2}{\mu_1}\left(\mathcal{F}_{p,q,\alpha}(\mu_1)-1\right)<\mathcal{F}_{p,q,\alpha}(\mu_1)-1,
\end{equation*}
thanks to \eqref{30121}. Thus, the thesis follows from the Intermediate Values Theorem, once that we show that $\mathcal{F}_{p,q,\alpha}(\mu_{p})<0$. To prove this fact, we observe at first that by definition $\mathcal{F}_{p,q,\alpha}(\mu_{p})<0$ if and only if $\mathcal{E}_{p,q,\alpha}(\mu_{p})<0$, and hence we can equivalently prove this inequality.

Now, let $p \in (4,6)$. In this case, by Theorem \ref{teorema 2-sopracritico grafi 2-periodici} there exists $u_p \in H^1_{\mu_{p}}(\mathcal{G})$ such that $E_p(u_p) = 0$. As a consequence, for every $\alpha>0$ and $q \in (4,6)$ we have that $\mathcal{E}_{p,q,\alpha}(\mu_{p}) \le E_{p,q,\alpha}(u_p)<0$,
as desired.

The very same argument also works for $p=6$ in case $\cG$ does not fulfill assumption ($H$), has no terminal point, and $\mu_{\cG}<\mu_{\R}$. Indeed, in this setting there exists a ground state for $\mathcal{E}_{6,q,\alpha}(\mu_{6}) =0$, by Theorem \ref{teorema critico grafi 2-periodici}-($iii$).

It remains to discuss what happens for $p=6$ if either assumption ($H$) holds, or $\cG$ has a terminal point, or else $\cG$ does not fulfill assumption ($H$), has no terminal point, and $\mu_{\cG} = \mu_{\R}$. In any of these cases, we have that $\mu_{6} = \mu_{\cG}$ coincides with $\tilde \mu_{\cG}$. Thus, the fact that $\mathcal{E}_{p,q,\alpha}(\mu_{p})<0$ follows directly from Proposition \ref{(un)bounded}.
\end{proof}


The previous results allow us to proceed with the:

\begin{proof}[Proof of Theorem \ref{teorema 2 sopracritico combined}]
The proof follows the same strategy as in the homogeneous case.

Let $\mu<\mu_{p,q,\alpha}$. By \eqref{massa critica}, we have that
    $\mathcal{F}_{p,q,\alpha}(\mu)> 0$, and hence
     \begin{equation*}
         E_{p,q,\alpha}(u) = \frac{1}{2}\|u'\|_2^2F_{p,q,\alpha}(u,\mu)>0 \qquad \forall u \in H^1_\mu(\cG).
\end{equation*}
But by Corollary \ref{cor_combined} we also have that $\mathcal{E}_{p,q,\alpha}(\mu)\leq 0$. Therefore, $\mathcal{E}_{p,q,\alpha}(\mu) = 0$, and the ground state energy level is not reached. 

Let now $\mu >\mu_{p,q,\alpha}$. By Proposition \ref{(un)bounded} and Lemma \ref{massa critica esistenza}, we have that $\mathcal{E}_{p,q,\alpha}(\mu) \in (-\infty,0)$. Therefore, the thesis follows from Proposition \ref{negative_combined}.

It remains to discuss the delicate case $\mu=\mu_{p,q,\alpha}$. By \eqref{massa critica}, we have that
    \begin{equation*}
        E_{p,q,\alpha}(u)=\frac{1}{2}\|u'\|_2^2F_{p,q,\alpha}(u, \mu_{p,q,\alpha})\geq 0 \qquad \forall u \in H^1_{\mu_{p,q,\alpha}}(\cG).
    \end{equation*}
    This, combined with Corollary \ref{cor_combined}, gives $ \mathcal{E}_{p,q,\alpha}(\mu_{p,q,\alpha})=0$. To obtain a ground state in case $q>4$, we consider a minimizing sequence $\{u_n\} \subseteq H^1_{\mu_{p,q,\alpha}}(\cG)$ for $F_{p,q,\alpha}(\cdot, \mu_{p,q,\alpha})$. This is also a bounded minimizing sequence for $E_{p,q,\alpha}$ constrained in $H^1_{\mu_{p,q,\alpha}}(\cG)$. To prove this claim, we discuss separately the cases $p<6$ and $p=6$. In the former one, the Gagliardo-Nirenberg inequality \eqref{GN} yields
\begin{equation*}
\begin{split}
    F_{p,q,\alpha}(u_n, \mu_{p,q,\alpha})&=1-\frac{2}{p}\frac{\|u_n\|_p^p}{\|u_n'\|_2^2}-\frac{2\alpha}{q}\frac{\|u_n\|_q^q}{\|u_n'\|_2^2} \geq 1-\frac{2}{p}C\frac{\|u_n\|_2^{\frac{p}{2}+1}}{\|u_n'\|_2^{3-\frac{p}{2}}}-\frac{2\alpha}{q}C\frac{\|u_n\|_2^{\frac{q}{2}+1}}{\|u_n'\|_2^{3-\frac{q}{2}}}\\
    & = 1-\frac{2}{p}C\frac{{\mu_{p,q,\alpha}}^{\frac{p+2}{4}}}{\|u_n'\|_2^{3-\frac{p}{2}}}-\frac{2\alpha}{q}C\frac{{\mu_{p,q,\alpha}}^{\frac{q+2}{4}}}{\|u_n'\|_2^{3-\frac{q}{2}}},
\end{split}
\end{equation*}
for some $C>0$. If $\{\|u_n'\|_2\}$ is not bounded, then
\begin{equation*}
 0=   \mathcal{F}_{p,q,\alpha}(\mu_{p,q,\alpha})= \lim\limits_n F_{p,q,\alpha}(u_n, \mu_{p,q,\alpha})= 1,
\end{equation*}
a contradiction. Therefore, $\{u_n\}$ is bounded in $H^1_{\mu_{p,q,\alpha}}$, and 
\begin{equation*}
E_{p,q,\alpha}(u_n)= \frac{1}{2}\|u_n'\|_2^2F_{p,q,\alpha}(u_n)\rightarrow 0,    
\end{equation*}
which proves the claim. If instead $p=6$, then we apply the Gagliardo-Nirenberg inequality \eqref{GN} and obtain
\begin{equation*}
     F_{6,q,\alpha}(u_n, \mu_{p,q,\alpha})=1-\frac{1}{3}\frac{\|u_n\|_6^6}{\|u_n'\|_2^2}-\frac{2\alpha}{q}\frac{\|u_n\|_q^q}{\|u_n'\|_2^2} \geq 1-\frac{1}{3}K_{\mathcal{G}}{\mu_{6,q,\alpha}}^2-\frac{2\alpha}{q}C\frac{{\mu_{6,q,\alpha}}^{\frac{q+2}{4}}}{\|u_n'\|_2^{3-\frac{q}{2}}}.
\end{equation*}
As before, if $\{\|u_n'\|_2\}$ is not bounded, then
\begin{equation}
\label{8.11}
    \mathcal{F}_{6,q,\alpha}(\mu_{6,q,\alpha})\geq 1-\frac{1}{3}K_{\mathcal{G}}\mu_{6,q,\alpha}^2>1-\frac{1}{3}K_{\mathcal{G}}{\mu_{\mathcal{G}}}^2,
\end{equation}
the last inequality following from the fact that $\mu_{6,q,\alpha}<\mu_{6}=\mu_{\mathcal{G}}$, as shown in Lemma \ref{massa critica esistenza}. By the definition of $\mu_{\mathcal{G}}$, the last term of \eqref{8.11} vanishes, yielding a contradiction with \eqref{massa critica}.
Therefore, $\{u_n\}$ is bounded in $H^1_{\mu_{p,q,\alpha}}(\cG)$, and we deduce the validity of the claim as before.

At this point, let $\{u_n\}$ be the previous bounded minimizing sequence. Up to a subsequence $u_n\rightharpoonup u$ for some $u \in H^1(\cG)$, and either $u\equiv 0$, or $u \in H^1_{\mu_{p,q,\alpha}}(\cG)$ is a ground state, by Lemma \ref{dich_combined}. Moreover, in the former case we have that $\|u_n\|_{L^\infty(\cG)} \to 0$. To complete the proof, we assume that $u \equiv 0$ and seek for a contradiction. If $q>4$, then
\begin{equation*}
\begin{split}
    0 & = \lim_n F_{p,q,\alpha}(u_n,\mu_{p,q,\alpha})=\lim_n \left(1-\frac{2}{p}\frac{\|u_n\|_p^p}{\|u_n'\|_2^2}-\frac{2\alpha}{q}\frac{\|u_n\|_q^q}{\|u_n'\|_2^2}\right) \\
    & \geq \lim_n \left( 1-\|u_n\|_{\infty}^{p-4}\frac{2}{p}\frac{\|u_n\|_4^4}{\|u_n'\|_2^2\|u_n\|_2^2}{\mu_{p,q,\alpha}}-\|u_n\|_{\infty}^{q-4}\frac{2\alpha}{q}\frac{\|u_n\|_4^4}{\|u_n'\|_2^2\|u_n\|_2^2}{\mu_{p,q,\alpha}}\right) \\
    &  \geq \lim_n \left(1-\|u_n\|_{\infty}^{p-4}\frac{2}{p}K_{4,\mathcal{G}}{\mu_{p,q,\alpha}}-\|u_n\|_{\infty}^{q-4}\frac{2\alpha}{q}K_{4,\mathcal{G}}{\mu_{p,q,\alpha}}\right) = 1,\\
\end{split}
\end{equation*}
which is not possible. 
\end{proof}

Finally:

\begin{proof}[Proof of Theorem \ref{int alpha}]
The estimate $\mu_{p,q,\alpha}<\mu_p$ follows from Lemma \ref{massa critica esistenza}. Regarding the other estimate from above, we start from the case $q \in (4,6)$. 
We claim that 
\begin{equation*}
    \mathcal{F}_{p,q,\alpha}\left(\left(\frac{1}{\alpha}\right)^{\frac{2}{q-2}} \mu_{q}\right)<0.
\end{equation*}
This implies that $\mu_{p,q,\alpha} < \mu_q / (\alpha^{2/(q-2)})$, recalling \eqref{massa critica}. By Theorem \ref{teorema 2-sopracritico grafi 2-periodici}, there exists $u_q \in H^1_{\mu_{q}}(\cG)$ such that $E_q(u_q) =0$. By definition \eqref{critical mass p}, this gives that $Q_q(u_q)=K_{q,\mathcal{G}}$, and moreover
\begin{equation*}
    \begin{split}
\mathcal{F}_{p,q,\alpha}\left(\left(\frac{1}{\alpha}\right)^{\frac{2}{q-2}} \mu_{q}\right) &\leq F_{p,q,\alpha}\left(\left(\frac{1}{\alpha}\right)^{\frac{1}{q-2}} u_{q}, \left(\frac{1}{\alpha}\right)^{\frac{2}{q-2}} \mu_{q}\right)\\
&=1-\frac{2}{p}Q_p(u_q){\mu_{q}}^{\frac{p-2}{2}}\left(\frac{1}{\alpha}\right)^{\frac{p-2}{q-2}}-\frac{2\alpha}{q}Q_q(u_q){\mu_{q}}^{\frac{q-2}{2}}\frac{1}{\alpha}\\
&< 1-\frac{2}{q} K_{q,\mathcal{G}}{\mu_{q}}^{\frac{q-2}{2}} =0,
    \end{split}
\end{equation*}
as desired. When $q=4$, the existence of a ground state of mass $\mu_{4}$ is not guaranteed. However, by optimality of $K_{4,\cG}$ in \eqref{int GN},
\begin{equation*}
    \forall \varepsilon >0 \  \exists u_{\varepsilon} \in H^1_{\mu_{4}}(\cG) \text{ such that } Q_4(u_{\varepsilon})>K_{4, \mathcal{G}}-\varepsilon.
\end{equation*}
Therefore, 
\begin{equation*}
\begin{split}
\mathcal{F}_{p,4,\alpha}\left(\frac{\mu_{4}}{\alpha}\right) &\leq F_{p,4,\alpha}\left(\frac{u_\varepsilon}{\sqrt{\alpha}}, \frac{\mu_{4}}{\alpha}\right) \\
&= 1-\frac{2}{p}Q_p(u_{\varepsilon})\left(\frac{\mu_{4}}{\alpha}\right)^{\frac{p-2}{2}}-\frac{1}{2}Q_4(u_{\varepsilon})\mu_{4} \\
    &<1-\frac{1}{2}K_{4,\cG} \mu_{4}+\frac{\varepsilon}{2}\mu_{4}=\frac{\varepsilon}{2}\mu_{4},
\end{split}
\end{equation*}
where we used again \eqref{critical mass p}. Since $\varepsilon>0$ was arbitrarily chosen, we infer that 
\begin{equation*}
     \mathcal{F}_{p,4,\alpha}\left(\frac{\mu_{4}}{\alpha}\right) \leq 0, \quad \text{ whence } \quad \mu_{p,4,\alpha}\leq \frac{\mu_{4}}{\alpha},
\end{equation*}
which completes the proof of the upper estimates.

The lower bound for $\mu_{p,q,\alpha}$ can be found by observing that, for $u \in H^1_{\mu}(\cG)$,
\begin{equation*}
\begin{split}
    F_{p,q,\alpha}(u, \mu)&=1-\frac{2}{p}Q_p(u)\mu^{\frac{p-2}{2}}-\frac{2\alpha}{q}Q_q(u)\mu^{\frac{q-2}{2}}\geq 1-\frac{2}{p}K_{p,\mathcal{G}}\mu^{\frac{p-2}{2}}-\frac{2\alpha}{q}K_{q,\mathcal{G}}\mu^{\frac{q-2}{2}} \\ &\geq 1-\left(\frac{\mu}{\mu_{p,\mathcal{G}}}\right)^{\frac{p-2}{2}}-\alpha\left(\frac{\mu}{\mu_{q,\mathcal{G}}}\right)^{\frac{q-2}{2}}:=f(\mu).
\end{split}
\end{equation*}
The function $f(\mu)$ is strictly decreasing for $\mu \in [0, +\infty)$, $f(0) = 1$, and $f(\mu) \to -\infty$ as $\mu \to + \infty$. Therefore, there exists a unique value $\overline{\mu}_{p, q,\alpha}$ such that $f(\overline{\mu}_{p, q,\alpha})=0$, and for any $u \in H^1_{\mu}(\cG)$ 
\begin{equation*}
    F_{p,q,\alpha}(u,\overline{\mu}_{p,q,\alpha}) \geq f(\overline{\mu}_{p,q,\alpha}) = 0.
\end{equation*}
This, by \eqref{massa critica}, finally implies that $\mu_{p,q,\alpha}\geq \overline{\mu}_{p, q,\alpha}$. 

The rest of the thesis follows straightforwardly.
\end{proof}

\section{The defocusing case}\label{sec: def}

We now analyze the defocusing NLS energy \eqref{def E} with $\alpha < 0$. Firstly, Proposition \ref{(un)bounded_defocusing} states the conditions for the boundedness of $E_{p,q,\alpha}$ in $H^1_{\mu}$.
\begin{proof}[Proof of Proposition \ref{(un)bounded_defocusing}]
The lower boundedness and the coercivity in points 1 and 3 can be proved as in Proposition \ref{(un)bounded}. To show that $\cE_{p,q,\alpha}(\mu) \le 0$ for all $\mu>0$, we consider the sequences $\{u_n\}$ constructed in the proofs of Theorem \ref{sottocritico_combined} or \ref{sottocritico bidimensionale combined}, and observe that $E_{p,q,\alpha}(u_n) \to 0$ as $n \rightarrow +\infty$ (but, since $\alpha<0$, we cannot conclude that the ground state level is negative).

Regarding the unboundedness from below when $\mu > \tilde \mu_{\cG}$, if $\cG$ has a terminal edge we can argue exactly as in \cite[Proof of Theorem 1.7 ($i$)]{combined}; moreover, the same techniques also shows that $\cE_{6,q,\alpha}(\mu, \cG)=-\infty$ if $\cG$ has no terminal edges and $\mu>\tilde \mu_{\cG} = \mu_{\R}$ (it is sufficient to start from a compactly supported function $u \in H^1_\mu(\R)$ with negative energy $E_6(u)<0$).

Finally, let $p=6$ and $\mu \in (0,\tilde\mu_{\cG}]$. By \eqref{GN}, we have that
\[
E_{6,q,\alpha}(u) \ge \frac12\left(1-\left(\frac{\mu}{\mu_{6,\cG}}\right)^2\right) \|u'\|_2^2 - \alpha \frac{C_{q,\cG}}{q} \mu^{\frac{q+2}{4}} \|u'\|_{L^2(\cG)}^{\frac{q-2}{2}}> \frac12\left(1-\left(\frac{\mu}{\mu_{6,\cG}}\right)^2\right) \|u'\|_2^2
\]
for all $u \in H^1_{\mu}(\cG)$. This implies that $\cE_{6,q,\alpha}(\mu) = 0$, and that the infimum is not attained.
\end{proof}

Observe that, unlike the focusing setting, $E_{p,q,\alpha}$ is bounded from below on $H^1_{\mu}$ when $p=6$ and $\mu=\tilde \mu_{\mathcal{G}}$. 
Proposition \ref{(un)bounded_defocusing} implies that, when $p=6$ and $\mu_{6,\mathcal{G}}=\tilde{\mu}_{\mathcal{G}}$, the ground state does not exist for every $\mu>0$. Recall that sufficient conditions for having $\mu_{6,\mathcal{G}}=\tilde{\mu}_{\mathcal{G}}$ are either that $\mathcal{G}$ contains a terminal edge, or that it satisfies condition ($H$). 

Now we focus on the existence results for either $p<6$ and $\mu>0$, or $p=6$ and $\mu_{6,\mathcal{G}}< \mu \leq \tilde{\mu}_{\mathcal{G}}$. For this inequality to hold, it is necessary that $\tilde{\mu}_{\mathcal{G}} = \mu_{\R}$. First of all, we prove a sufficient condition for the coercivity of $E_{6,q,\a}$ in $H^1_{\mu}$.
\begin{proposition}
\label{coercivity defocusing critical}
    Let $\cG$ be a $1$ or $2$-periodic metric graph, $2 < q <p=6$, and $\mu_{6,\mathcal{G}}<\mu\leq \tilde{\mu}_{\mathcal{G}}=\mu_{6,\mathbb{R}}=\mu_{\mathbb{R}}$. If $\cE_{6,q,\a}(\mu)<0$, then $E_{6,q,\a}$ is coercive in $H^1_{\mu}(\cG)$.
\end{proposition}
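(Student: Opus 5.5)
The plan is to argue by contradiction. Suppose there is a sequence $\{u_n\}\subset H^1_\mu(\cG)$ with $\|u_n'\|_2\to+\infty$ while $E_{6,q,\alpha}(u_n)\le M$ for some $M\in\R$. Since $\alpha<0$, the $q$-term has a favourable sign:
\[
E_{6,q,\alpha}(u)=E_6(u)+\frac{|\alpha|}{q}\|u\|_q^q .
\]
By Lemma \ref{lem: mod GN}, for each $n$ there is $\theta_n:=\theta_{u_n}\in[0,\mu]$ such that
\[
M\ \ge\ E_{6,q,\alpha}(u_n)\ \ge\ \frac12\Bigl(1-\Bigl(\frac{\mu-\theta_n}{\mu_\R}\Bigr)^2\Bigr)\|u_n'\|_2^2-\frac{C_\cG}{6}\theta_n^{1/2}+\frac{|\alpha|}{q}\|u_n\|_q^q .
\]
If $\mu<\mu_\R$, the coefficient of $\|u_n'\|_2^2$ is at least $\tfrac12\bigl(1-(\mu/\mu_\R)^2\bigr)>0$, which gives an immediate contradiction. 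So the only real case is $\mu=\mu_\R$. In that case the same inequality forces $\theta_n\|u_n'\|_2^2$ to stay bounded, and in particular $\theta_n\to0$.

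\textbf{Step 1: almost-optimality.} I would extract that the sequence is almost optimal for the sharp one-dimensional Gagliardo--Nirenberg inequality on the line. Since the bound above gives $E_6(u_n)\le M$, we get
\[
\frac13\|u_n\|_6^6\ \ge\ \|u_n'\|_2^2-2M ,
\]
so after normalisation $\|u_n\|_6^6/\|u_n'\|_2^2\to3$ at mass $\mu_\R$. I would then rescale $w_n(x):=\lambda_n^{-1/2}u_n(x/\lambda_n)$ with $\lambda_n:=\|u_n'\|_2\to\infty$. This can be done locally around a point $x_n$ where $|u_n|$ is maximal; by periodicity, $x_n$ may be translated into the fundamental domain $K$ given by Lemma \ref{fund_dom}. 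Under this rescaling the edges become longer and longer, so the graph locally looks like a star graph or like $\R$. The absence of terminal points is used here, as in the proof of Lemma 4.4 in \cite{critical}: I would show that $w_n$ converges strongly in $L^2$ to an optimiser of the Gagliardo--Nirenberg inequality on $\R$ with mass $\mu_\R$, i.e. to a soliton $\phi$ (up to scaling, which is fixed by $\|w_n'\|_2=1$).

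\textbf{Step 2: the $L^q$-norm blows up.} Strong convergence, together with the uniform bound on $\|w_n\|_\infty$ coming from \eqref{GN_infty}, gives
\[
\|u_n\|_q^q=\lambda_n^{\frac{q-2}{2}}\|w_n\|_q^q\ \ge\ c\,\lambda_n^{\frac{q-2}{2}}\longrightarrow+\infty .
\]
On the other hand, $E_6(u_n)$ is bounded from below, by the modified Gagliardo--Nirenberg estimate and $\theta_n\le\mu$. Hence $E_{6,q,\alpha}(u_n)\to+\infty$, which contradicts $E_{6,q,\alpha}(u_n)\le M$.

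\textbf{Step 3: role of $\cE_{6,q,\alpha}(\mu)<0$.} If the compactness in Step 1 cannot be fully established, I would use the hypothesis $\cE_{6,q,\alpha}(\mu)<0$ as a fallback. The aim is to reduce to sequences along which $E_6(u_n)$ stays below a negative constant, for instance by comparing with the competitor realising negative energy. Along such sequences the strict inequality $\mu>\mu_{6,\cG}$ rules out vanishing, and one obtains a quantitative lower bound on $\|u_n\|_\infty^2/\|u_n'\|_2$. Via the Gagliardo--Nirenberg inequality this lower bound already forces $\|u_n\|_q^q\gtrsim\|u_n'\|_2^{(q-2)/2}$, again contradicting the upper bound $M$.

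The main obstacle I expect is the concentration/compactness step in Step 1: showing that near-optimality of the critical Gagliardo--Nirenberg quotient at the threshold mass $\mu_\R$, on a periodic graph without terminal points, forces $u_n$ to concentrate at one point like a soliton. This is what yields the lower bound on $\|u_n\|_q^q$ growing like $\|u_n'\|_2^{(q-2)/2}$, and everything else in the argument is routine.
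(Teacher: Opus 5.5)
As written, your argument has a gap exactly at the step you flag. Step 1 asserts a concentration--compactness statement without proof: the blow-ups $w_n$ live on graphs $\lambda_n\cG$ that change with $n$ and whose local limits may be star graphs rather than $\R$, and you claim they converge strongly to a soliton on $\R$. Establishing this would require a full profile analysis on varying domains. Step 3 does not rescue it. Coercivity concerns every sequence with bounded energy, so you cannot reduce to sequences with $E_6(u_n)$ below a negative constant. The lower bound on $\|u_n\|_\infty^2/\|u_n'\|_2$ is also only announced, not derived.

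Step 1 is in fact unnecessary. From $E_6(u_n)\le E_{6,q,\alpha}(u_n)\le M$ you get $\|u_n\|_6^6\ge 3\|u_n'\|_2^2-6M$. Interpolation gives $\|u_n\|_6^6\le \|u_n\|_\infty^{6-q}\|u_n\|_q^q$, and \eqref{GN_infty} gives $\|u_n\|_\infty^{6-q}\le C\mu^{\frac{6-q}{4}}\|u_n'\|_2^{\frac{6-q}{2}}$. Hence $\|u_n\|_q^q\ge c\|u_n'\|_2^{\frac{q-2}{2}}\to+\infty$. This is the same trick as in \eqref{u_n_q non fa vanishing}; in your rescaled variables it reads $\|w_n\|_q^q\ge\|w_n\|_6^6/\|w_n\|_\infty^{6-q}$, so strong convergence was never needed in Step 2 either. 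Lemma \ref{lem: mod GN} gives $E_6\ge-\frac{C_\cG}{6}\mu^{1/2}$ on $H^1_\mu(\cG)$, and together with the blow-up of $\|u_n\|_q^q$ this contradicts $E_{6,q,\alpha}(u_n)\le M$. The argument works for every $\mu\le\mu_\R$ and uses neither $\cE_{6,q,\alpha}(\mu)<0$ nor $\mu>\mu_{6,\cG}$.

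The paper argues differently and even more cheaply. Along a minimizing sequence, both the gradient term (since $\mu\le\mu_\R$) and the $q$-term (since $\alpha<0$) are nonnegative, so $E_{6,q,\alpha}(u_n)\ge -C\theta_n^{1/2}$. Negativity of $\cE_{6,q,\alpha}(\mu)$ then forces $\theta_n\ge c>0$, which keeps the coefficient $1-((\mu-\theta_n)/\mu_\R)^2$ away from zero. You essentially had this in your first paragraph, where you showed $\theta_n\to0$ along unbounded sequences with bounded energy. The paper's argument only bounds sequences whose energy is eventually below a negative level, which is enough for Lemma \ref{dichotomy combined defocusing}. The corrected version of yours gives coercivity in the full sense, and shows that the hypothesis $\cE_{6,q,\alpha}(\mu)<0$ is not needed for it.
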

\begin{proof}
The condition $\mu_{6,\mathcal{G}}<\tilde{\mu}_{\mathcal{G}}=\mu_{6,\mathbb{R}}$ implies that there are no terminal edges in $\cG$. Then, we can apply the modified Gagliardo-Nirenberg inequality in Lemma \ref{lem: mod GN} on a minimizing sequence $\{u_n\}$:
\begin{equation*}
\begin{split}
    E_{6,q,\a}(u_n)&=\frac{1}{2}\|u_n'\|_2^2-\frac{1}{p}\|u_n\|_6^6-\frac{\a}{q}\|u_n\|_q^q \\
    &\geq \frac{1}{2}\|u_n'\|_2^2\left(1-\left(\frac{\mu-\theta_n}{\mu_{\R}}\right)^2\right)-C\theta_n^{\frac{1}{2}}-\frac{\a}{q}\|u_n\|_q^q\\
    &\geq \frac{1}{2}\|u_n'\|_2^2\left(1-\left(\frac{\mu_{\R}-\theta_n}{\mu_{\R}}\right)^2\right)-C\theta_n^{\frac{1}{2}}-\frac{\a}{q}\|u_n\|_q^q
\end{split}
\end{equation*}
    for some $\theta_n \in [0,\mu]$. Since $E_{6,q,\a}(u_n) \rightarrow \mathcal{E}_{6,q,\a}(\mu)<0$, there exists $C>0$ such that $\theta_n \geq C$, for $n$ sufficiently large. Therefore,
\begin{equation*}
    E_{6,q,\a}(u_n)  \geq\frac{1}{2}\|u_n'\|_2^2\left(1-\left(\frac{\mu_{\R}-C}{\mu_{\R}}\right)^2\right)-C\mu^{\frac{1}{2}} - \frac{\a}{q}C\|u_n\|_2^{\frac{q}{2}+1}\|u_n'\|_2^{\frac{q}{2}-1},
\end{equation*}    
having used the Gagliardo-Nirenberg inequality \eqref{GN} on the last term. Since $\mathcal{E}_{6,q,\a}(\mu)<0$, this proves the boundedness of $\|u_n'\|_2^2$.
\end{proof}

Next, we prove the counterparts of Lemma \ref{dich_combined} and Proposition \ref{negative_combined}.

\begin{lem}
\label{dichotomy combined defocusing}
Let $\mathcal{G}$ be a $1$ or $2$-periodic metric graph, $2 < q <p$, with either $p<6$ and $\mu>0$, or $p=6$ and $\mu_{6,\mathcal{G}}<\mu\leq \tilde{\mu}_{\mathcal{G}}=\mu_{6,\mathbb{R}}$. Suppose further that $\cE_{p,q,\a}(\mu)<0$, and let $\{u_n\} \subseteq H^1_{\mu}$ be a minimizing sequence for $E_{p,q,\alpha}$. Then up to a subsequence $u_n \rightharpoonup u$ in $H^1$, and either $u \in H^1_{\mu}$ is a ground state of $E_{p,q,\a}$, or $u\equiv 0$ and $\|u\|_{L^\infty}(\cG) \to 0$.
\end{lem}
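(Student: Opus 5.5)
The plan is to first show that the minimizing sequence is bounded in $H^1$. For $p<6$ this follows from the coercivity in Proposition \ref{(un)bounded_defocusing}. For $p=6$ with $\mu_{6,\cG}<\mu\le\tilde\mu_{\cG}$ it follows from Proposition \ref{coercivity defocusing critical}, which applies because we assume $\cE_{p,q,\alpha}(\mu)<0$. Boundedness gives, up to a subsequence, $u_n\rightharpoonup u$ in $H^1(\cG)$ and $u_n\to u$ in $L^\infty_{\rm loc}$.

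Next, exploit periodicity. If $\|u_n\|_{L^\infty}\not\to 0$, pick points $x_n$ with $|u_n(x_n)|\ge\delta>0$ and translate each $u_n$ by an element $g_n\in\mathbb{Z}^d$ so that $x_n$ lands in the fundamental domain $K$. Translation leaves both the energy and the mass unchanged, so the translated sequence is still minimizing, and its weak limit $u$ is nonzero. (If no such points exist, then $\|u_n\|_\infty\to0$ and we are in the second alternative.)

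Assume now $u\not\equiv0$, and write $m=\|u\|_2^2\in(0,\mu]$. We use the Brezis--Lieb splitting for the $L^2$, $L^q$, $L^p$ norms and for the kinetic term, with $v_n=u_n-u$:
\[
E_{p,q,\alpha}(u_n)=E_{p,q,\alpha}(u)+E_{p,q,\alpha}(v_n)+o(1), \qquad \|v_n\|_2^2\to\mu-m.
\]
The aim is to show $m=\mu$. Suppose instead $m<\mu$. Here the defocusing sign is the main obstacle, because the usual strict subadditivity coming from scaling $t\mapsto \cE(t\mu)$ breaks down: the $L^q$ term enters with sign $-\alpha>0$. To get around it, I would use mass rescaling with factors larger than $1$ and exploit the negativity of the level, as in \cite{combined}. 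For $\theta>1$ and any $w$,
\[
E_{p,q,\alpha}(\sqrt\theta w)=\theta E_{p,q,\alpha}(w)-\frac{\theta^{p/2}-\theta}{p}\|w\|_p^p-\alpha\,\frac{\theta^{q/2}-\theta}{q}\|w\|_q^q .
\]
The last term has the wrong sign, so the estimate needs care. I would handle it by mixing the two rescaled pieces according to $\|w\|_p^p$ versus $\|w\|_q^q$, or more simply by showing directly that $t\mapsto \cE_{p,q,\alpha}(t)/t$ is nonincreasing, and strictly decreasing wherever $\cE<0$. To get this, take a near-minimizer $w$ at mass $t$ with $E(w)<0$; then $\frac1p\|w\|_p^p>\frac12\|w'\|^2-\frac{\alpha}{q}\|w\|_q^q\ge-\frac{\alpha}{q}\|w\|_q^q$. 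Combined with $\frac{\theta^{p/2}-\theta}{p}\ge c\,\frac{\theta^{q/2}-\theta}{q}$ for $p>q$, this shows the rescaled energy drops. For $p=6$ the rescaling must keep the mass at most $\tilde\mu_\cG$, which holds automatically since the masses being rescaled are at most $\mu$.

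From the strict inequality $\cE(\mu)<\cE(m)+\cE(\mu-m)$, obtained in the standard way from the monotonicity of $\cE(t)/t$ and $\cE(\mu)<0$, we contradict the splitting, since
\[
\cE(\mu)=\lim_n E_{p,q,\alpha}(u_n)\ge \cE(m)+\liminf_n E_{p,q,\alpha}(v_n)\ge\cE(m)+\cE(\mu-m)+o(1).
\]
The last step uses that $\cE(\cdot)$ is continuous and that $\|v_n\|_2^2\to\mu-m$. Hence $m=\mu$, so $u_n\to u$ in $L^2$, and by Gagliardo--Nirenberg also in $L^q$ and $L^p$. Weak lower semicontinuity of the gradient term then makes $u$ a ground state.
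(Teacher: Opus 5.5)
Your proposal is correct and follows essentially the paper's route. You bound the minimizing sequence, split $E_{p,q,\alpha}$ via Brezis--Lieb, and get strict subadditivity of $\cE_{p,q,\alpha}$ from mass rescaling together with $\cE_{p,q,\alpha}(\mu)<0$. Your explicit translation step is the right way to read the dichotomy, which would otherwise fail for translated copies of a ground state.

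The defocusing sign is less of an obstacle than you fear. The paper simply uses $E_{p,q,\alpha}(\sqrt{\theta}\,w)\le\theta^{p/2}E_{p,q,\alpha}(w)$ for $\theta\ge1$, and your own computation gives exactly this if you keep the term $-E_{p,q,\alpha}(w)$ instead of discarding it. You need this quantitative form, since otherwise the strict decrease of $\cE(t)/t$ does not survive passing to the infimum.
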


The proof of the lemma borrows several ingredients from analogue statements proved in \cite{threshold, square_grid, combined}. However (unlike Lemma \ref{dich_combined}) it is not a direct adaptation, and it needs further justifications. 
Thus, we provide a sketch of the proof, for the sake of completeness.

\begin{proof}
As shown in \cite[Lemma 3.5]{combined} for $\alpha>0$, and in \cite[Theorem 3.2]{threshold} or \cite[Lemma 3.2]{square_grid} for $\alpha=0$, the lemma can be proved by employing the Brezis-Lieb lemma on the energy $E_{p,q,\alpha}$, combined with the subadditivity inequality on $\cE_{p,q,\alpha}$ in $H^1_\mu$: for every $\mu_1,\mu_2>0$ with $\mu_1+\mu_2=\mu$, it results that
\beq
\label{subadditivity}
\cE_{p,q,\a}(\mu) < \cE_{p,q,\a}(\mu_1)+\cE_{p,q,\a}(\mu_2)
\eeq
With these ingredients in hand, the proof of Lemma \ref{dichotomy combined defocusing} is complete. Regarding the Brezis-Lieb lemma, it holds on periodic graphs, without any problem. Thus, it remains to show that under the current assumptions the subadditivity inequality \eqref{subadditivity} is valid on $\mathcal{E}_{p,q,\a}$. In this perspective, for us it is essential to suppose that $\cE_{p,q,\alpha}(\mu)<0$. 

First of all, we claim that for all $\nu \in (0,\mu)$ and $\theta \ge 1$ such that $\theta \nu \le \mu$
\begin{equation}
\label{disuguaglianza theta}
\mathcal{E}_{p,q,\alpha}(\theta \nu) \leq \theta^\frac{p}{2} \mathcal{E}_{p,q,\alpha}(\nu).
\end{equation}
Indeed, let $\{u_n\} \subseteq H^1_{\nu}$ be a minimizing sequence for $E_{p,q,\alpha}$. Then, $\{\theta^{\frac{1}{2}}u_n\} \subseteq H^1_{\theta \nu}$, and
\begin{equation*}
\begin{split}
     \mathcal{E}_{p,q,\alpha}(\theta \nu) &\leq E_{p,q,\alpha}(\theta^{\frac{1}{2}}u_n) = \frac{1}{2}\theta^2\|u_n'\|_2^2-\frac{1}{p}\theta^{\frac{p}{2}}\|u_n\|_p^p+\frac{|\a|}{q}\theta^{\frac{q}{2}}\|u_n\|_q^q \\
     &\leq \theta^{\frac{p}{2}}E_{p,q,\a}(u_n) = \theta^{\frac{p}{2}}\left(\mathcal{E}_{p,q,\a}(\nu)+o(1)\right)
\end{split}
\end{equation*}
whence \eqref{disuguaglianza theta} follows. By choosing $\theta=\mu/\mu_1 = (\mu_1+\mu_2)/\mu_1$ and $\nu=\mu_1$ in \eqref{disuguaglianza theta}, we obtain
\[
\mathcal{E}_{p,q,\a}(\mu) \leq \left(\frac{\mu_1+\mu_2}{\mu_1}\right)^{\frac{p}{2}}\mathcal{E}_{p,q,\a}(\mu_1) \quad \iff \quad \left(\frac{\mu_1}{\mu_1+\mu_2}\right)^{\frac{p}{2}} \mathcal{E}_{p,q,\a}(\mu) \le \mathcal{E}_{p,q,\a}(\mu_1).
\]
Replacing $\mu_1$ with $\mu_2$, we also have
\[
\left(\frac{\mu_2}{\mu_1+\mu_2}\right)^{\frac{p}{2}}\mathcal{E}_{p,q,\a}(\mu_1+\mu_2) \leq \mathcal{E}_{p,q,\a}(\mu_2).
\]
Summing side by side the previous inequalities, we obtain
\begin{equation*}
    \left(\left(\frac{\mu_1}{\mu_1+\mu_2}\right)^{\frac{p}{2}}+\left(\frac{\mu_2}{\mu_1+\mu_2}\right)^{\frac{p}{2}}\right)\mathcal{E}_{p,q,\a}(\mu) \leq \mathcal{E}_{p,q,\a}(\mu_1)+\mathcal{E}_{p,q,\a}(\mu_2).
\end{equation*}
As a consequence, using also that $\mathcal{E}_{p,q,\a}(\mu)<0$ and $\mu_1, \mu_2>0$,
\begin{equation*}
\begin{split}
    \mathcal{E}_{p,q,\a}(\mu) &= \frac{\mu_1}{\mu_1+\mu_2}\mathcal{E}_{p,q,\a}(\mu)+\frac{\mu_2}{\mu_1+\mu_2}\mathcal{E}_{p,q,\a}(\mu) \\
    &< \left(\frac{\mu_1}{\mu_1+\mu_2}\right)^{\frac{p}{2}}\mathcal{E}_{p,q,\a}(\mu)+\left(\frac{\mu_2}{\mu_1+\mu_2}\right)^{\frac{p}{2}}\mathcal{E}_{p,q,\a}(\mu) \\
    &\leq\mathcal{E}_{p,q,\a}(\mu_1)+\mathcal{E}_{p,q,\a}(\mu_2),
    \end{split}
\end{equation*}
which is the desired inequality. As already discussed, at this point one can proceed as in \cite{threshold, combined}. 
\end{proof}

As in \cite{threshold, square_grid, combined}, one can deduce the:

\begin{proposition}
\label{negative_defocusing}
Let $\mathcal{G}$ be a $1$ or $2$-periodic metric graph, $2 < q <p$, with either $p<6$ and $\mu>0$, or $p=6$ and $\mu_{6,\mathcal{G}}<\mu\leq \tilde{\mu}_{\mathcal{G}}=\mu_{6,\mathbb{R}}$. If $\mathcal{E}_{p,q,\alpha}(\mu)<0$, then there exists a ground state of mass $\mu$.
\end{proposition}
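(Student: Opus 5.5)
We take a minimizing sequence $\{u_n\}\subseteq H^1_\mu(\cG)$ for $E_{p,q,\alpha}$, with $\mathcal{E}_{p,q,\alpha}(\mu)<0$. First we show it is bounded in $H^1$. For $p<6$ this follows from the coercivity in Proposition \ref{(un)bounded_defocusing}-(1). For $p=6$ and $\mu_{6,\cG}<\mu\le\tilde\mu_{\cG}=\mu_\R$ it follows from Proposition \ref{coercivity defocusing critical}, which applies exactly because $\mathcal{E}_{6,q,\alpha}(\mu)<0$. Up to a subsequence, $u_n\rightharpoonup u$ in $H^1(\cG)$. By Lemma \ref{dichotomy combined defocusing}, either $u$ is already a ground state of mass $\mu$, or $u\equiv0$ and $\|u_n\|_{L^\infty(\cG)}\to0$. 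The proof therefore reduces to ruling out this vanishing alternative.

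Next we use periodicity to avoid vanishing caused by translation. Let $K$ be a fundamental domain from Lemma \ref{fund_dom}. For each $n$, choose $g_n\in\mathbb{Z}^d$ and a point $x_n\in g_n+K$ where $|u_n|$ attains (up to a factor $1/2$) its supremum. Replace $u_n$ by the translate $u_n(\cdot+g_n)$. Since every translation is a graph automorphism preserving lengths, this is still a minimizing sequence in $H^1_\mu$. For the translated sequence, the sup of $|u_n|$ is essentially attained on the fixed compact set $K$. If the weak limit of the translated sequence is nonzero, the dichotomy gives a ground state. So the only remaining case is $\|u_n\|_\infty\to0$ for the translated sequence as well.

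The main step is to exclude $\|u_n\|_\infty\to0$, and this is where negativity of the energy is used. We interpolate: $\|u_n\|_p^p\le\|u_n\|_\infty^{p-2}\|u_n\|_2^2=\mu\|u_n\|_\infty^{p-2}\to0$. Since $\alpha<0$, the term $-\frac{\alpha}{q}\|u_n\|_q^q$ is nonnegative. Hence
\[
E_{p,q,\alpha}(u_n)\ge\tfrac12\|u_n'\|_2^2-\tfrac1p\|u_n\|_p^p\ge -\tfrac{\mu}{p}\|u_n\|_\infty^{p-2}\to0 .
\]
This gives $\liminf_n E_{p,q,\alpha}(u_n)\ge0$, which contradicts $E_{p,q,\alpha}(u_n)\to\mathcal{E}_{p,q,\alpha}(\mu)<0$. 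The bound holds for $p=6$ as well, because boundedness is only needed to extract the weak limit. So vanishing is impossible, the weak limit is nonzero, and Lemma \ref{dichotomy combined defocusing} gives a ground state. I expect the main obstacle to be the dichotomy lemma itself, in particular the strict subadditivity \eqref{subadditivity} and the Brezis--Lieb splitting. These are assumed from the preceding lemma, and the remaining argument is the short vanishing estimate above.
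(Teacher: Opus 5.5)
Your proposal is correct and follows the route the paper intends; the paper gives no separate proof and only says the result is deduced ``as in'' the earlier works. Your steps match that argument: boundedness of minimizing sequences from Proposition~\ref{(un)bounded_defocusing}-(1) or Proposition~\ref{coercivity defocusing critical}, the dichotomy of Lemma~\ref{dichotomy combined defocusing}, and exclusion of vanishing via $\|u_n\|_p^p\le\mu\|u_n\|_\infty^{p-2}\to0$, which forces $\liminf_n E_{p,q,\alpha}(u_n)\ge 0$ against $\mathcal{E}_{p,q,\alpha}(\mu)<0$. Given how the dichotomy lemma is stated, your translation step is not strictly needed, but it makes explicit the use of periodicity that the lemma tacitly relies on.
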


In what follows, we focus on Theorem \ref{defocusing_theorem}. It is clear that the function $\alpha \mapsto \mathcal{E}_{p,q,\alpha}(\mu)$ is non-increasing for $\alpha \in (-\infty, 0]$ (since the same property holds for  $\alpha \mapsto E_{p,q,\alpha}(u)$, for every $u \in H^1_\mu(\cG)$ fixed). Thus, the special value $\overline{\alpha}$ introduced in \eqref{alpha_definition} is well defined (as already observed, the definition can also be given on noncompact metric graphs with a finite number of edges, see \cite[Lemma 4.1]{combined}), and we give a useful alternative characterization.
\begin{lem}
	\label{caratterizzazione alpha}
	Let $\mathcal{G}$ be either a periodic graph, or a noncompact graph with a finite number of edges; let $\mu>0$ and $2<q<p \leq 6$. Additionally, suppose that $\mathcal{E}_p(\mu)<0$. Then:
	\begin{equation}
		\label{alpha_definition_2}
		\overline{\alpha}= \inf\limits_{u \in H^1_{\mu}}Q_{p,q}(u), \quad \text{being} \quad
		Q_{p,q}(u):=\frac{\frac{1}{2}\|u'\|_2^2-\frac{1}{p}\|u\|_p^p}{\frac{1}{q}\|u\|_q^q}.
	\end{equation}
\end{lem}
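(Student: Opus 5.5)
The plan is to prove the two inequalities $\overline{\alpha}\le \inf Q_{p,q}$ and $\overline{\alpha}\ge \inf Q_{p,q}$ separately, working directly from the definition \eqref{alpha_definition}. Set $\beta:=\inf_{u\in H^1_\mu}Q_{p,q}(u)$. The starting observation is that, for $u\in H^1_\mu$ and $\alpha<0$,
\[
E_{p,q,\alpha}(u)=\frac1q\|u\|_q^q\bigl(Q_{p,q}(u)-\alpha\bigr),
\]
with $\|u\|_q^q>0$. Hence $E_{p,q,\alpha}(u)<0$ if and only if $Q_{p,q}(u)<\alpha$. Since $\mathcal{E}_p(\mu)<0$, there is some $u$ with $E_p(u)<0$, i.e.\ $Q_{p,q}(u)<0$, so $\beta<0$. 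We will also need $\beta>-\infty$, which is nontrivial; we postpone that point.

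\emph{Inequality $\overline{\alpha}\ge\beta$.} Let $\alpha<0$ satisfy $\mathcal{E}_{p,q,\alpha}(\mu)<0$. Then some $u\in H^1_\mu$ has $E_{p,q,\alpha}(u)<0$, so $\beta\le Q_{p,q}(u)<\alpha$. Taking the infimum over all such $\alpha$ in the first expression of \eqref{alpha_definition} gives $\overline{\alpha}\ge\beta$. This also covers the convention $\overline{\alpha}=-\infty$: if $\mathcal{E}_{p,q,\alpha}(\mu)<0$ for every $\alpha<0$, then $\beta<\alpha$ for all $\alpha$, so $\beta=-\infty$.

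\emph{Inequality $\overline{\alpha}\le\beta$.} Take any $\alpha\in(\beta,0)$, which is nonempty since $\beta<0$. Choose $u$ with $Q_{p,q}(u)<\alpha$. Then $E_{p,q,\alpha}(u)<0$, so $\mathcal{E}_{p,q,\alpha}(\mu)<0$ and therefore $\overline{\alpha}\le\alpha$. Letting $\alpha\downarrow\beta$ yields $\overline{\alpha}\le\beta$. Conversely, for $\alpha\le\beta$ one has $E_{p,q,\alpha}(u)\ge0$ for all $u\in H^1_\mu$. Combined with $\mathcal{E}_{p,q,\alpha}(\mu)\le0$ from Proposition \ref{(un)bounded_defocusing}, or from the analogous dispersing sequence on graphs with finitely many edges, this gives $\mathcal{E}_{p,q,\alpha}(\mu)=0$. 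So the sup-characterization in \eqref{alpha_definition} agrees as well.

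The two inequalities together prove \eqref{alpha_definition_2} as an identity in $[-\infty,0)$; this is all the lemma claims. The finiteness of $\beta$ is the genuine obstacle, and I expect it to be addressed separately later in the proof of Theorem \ref{defocusing_theorem}(3). If I had to attempt it here, I would use the Gagliardo--Nirenberg inequality \eqref{GN} to bound $\frac1p\|u\|_p^p$ above by an interpolation between $\|u'\|_2^2$ and $\|u\|_q^q$, so that the numerator of $Q_{p,q}$ is bounded below by $-C\|u\|_q^q$. For $p=6$ and $\mu\le\mu_\R$, I would instead use Lemma \ref{lem: mod GN}. This step is delicate, and I would not include it in the proof of the characterization itself.
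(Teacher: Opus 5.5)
Your proof is correct and is essentially the paper's argument. Both directions rest on the identity $E_{p,q,\alpha}(u)=\frac1q\|u\|_q^q\bigl(Q_{p,q}(u)-\alpha\bigr)$, your bound $\overline{\alpha}\le\beta$ is the paper's, and, as in the paper, the finiteness of the infimum is left to Theorem \ref{defocusing_theorem}(3). The one minor difference is in $\overline{\alpha}\ge\beta$: you use the inf-characterization directly, whereas the paper shows $\mathcal{E}_{p,q,\tilde\alpha}(\mu)=0$ and invokes the sup-characterization, which forces it to split off the case $\tilde\alpha=-\infty$ and to use $\mathcal{E}_{p,q,\tilde\alpha}(\mu)\le 0$ implicitly.
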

\begin{proof}
	We denote by $\tilde \alpha$ the right hand side in \eqref{alpha_definition_2}. Since $\mathcal{E}_p(\mu)<0$, we have that $\tilde{\alpha}<0$. First of all, we prove that $\tilde{\alpha}\leq \overline{\alpha}$. If $\tilde{\alpha}=-\infty$, there is nothing to prove; if $\tilde{\alpha}>-\infty$, then
	\begin{equation*}
	\tilde{\alpha} \leq Q_{p,q}(u) \quad \implies \quad  	E_{p,q,\tilde{\alpha}}(u) = \frac{1}{2}\|u'\|_2^2-\frac{1}{p}\|u\|_p^p-\frac{\tilde{\alpha}}{q}\|u\|_q^q \geq 0  \qquad \forall u \in H^1_{\mu},
	\end{equation*}
	whence $\mathcal{E}_{p,q,\tilde{\alpha}}(\mu) = 0$. Thus, by definition \eqref{alpha_definition}, we conclude that $\tilde{\alpha}\leq \overline{\alpha}$. \\
	Now, we address the opposite inequality. Let $\beta>\tilde{\alpha}$ (if $\tilde{\alpha}=-\infty$, we take any $\beta<0$). By definition of $\tilde \alpha$, we can assert that
	\begin{equation*}
		\exists u_{\beta} \in H^1_{\mu} \text{ such that} \qquad \beta > \frac{\frac{1}{2}\|u_{\beta}'\|_2^2-\frac{1}{p}\|u_{\beta}\|_p^p}{\frac{1}{q}\|u_{\beta}\|_q^q},
	\end{equation*}
	namely $E_{p,q,\beta}(u_{\beta})<0$. This shows that $\mathcal{E}_{p,q,\beta}(\mu)<0$, and therefore $\beta\geq \overline{\alpha}$. Since $\beta$ is a generic value above $\tilde{\alpha}$, we conclude that $\overline{\alpha}\leq \tilde{\alpha}$.
\end{proof}

\begin{proof}[Proof of Theorem \ref{defocusing_theorem}] 
(1) Clearly, $\overline{\alpha}=0$ if $\mathcal{E}_p(\mu)=0$. On the other hand, if $\mathcal{E}_p(\mu)<0$ we have $\overline{\alpha}<0$. Indeed, $\mathcal{E}_{p}(\mu)<0$ implies that there exists $u \in H^1_{\mu}$ such that $E_{p}(u)<0$, and by continuity with respect to $\alpha$ the same holds for $E_{p,q,\alpha}(u)$ provided that $|\alpha|$ is sufficiently small. \\
(2) The existence of a ground state for $\alpha \in (\overline{\alpha},0)$ follows directly by Definition \ref{alpha_definition}, and by Proposition \ref{negative_defocusing}. The proof of the non-existence for $\alpha<\overline{\alpha}$ is presented in \cite[Theorem 1.7 (ii) (b)]{combined} for $p=6$, and holds without modifications on periodic graphs, also when $p<6$.\\
(3) 
This point concerns the case when $\overline{\alpha}<0$; thus, by point 1, $\cE_p(\mu) <0$, so that the alternative characterization in Lemma \ref{caratterizzazione alpha} is available. First of all, by continuity of $E_{p,q,\alpha}(u)$ (for fixed $u$) with respect to $\alpha$, it follows easily that $\mathcal{E}_{p,q,\overline{\alpha}}(\mu)=0$; otherwise, we would obtain a contradiction with the definition \eqref{alpha_definition} of $\overline{\alpha}$ as $\inf$. 
Next, 
we prove that $\overline{\alpha}>-\infty$ and that any minimizing sequence $\{u_n\}$ for $Q_{p,q}$ is bounded in $H^1$. If $p<6$, then 
\begin{equation*}
	\begin{split}
		Q_{p,q}(u_n)&=\frac{\frac{1}{2}\|u_n'\|_2^2-\frac{1}{p}\|u_n\|_p^p}{\frac{1}{q}\|u_n\|_q^q} \geq C\frac{\|u_n'\|_2^2}{\|u_n'\|_2^{\frac{q}{2}-1}\|u_n\|_2^{\frac{q}{2}+1}}-\frac{q}{p}\frac{\|u_n\|_q^q\|u_n\|_{\infty}^{p-q}}{\|u_n\|_q^q} \\
		&  \geq C\|u_n'\|_2^{3-\frac{q}{2}}-C\|u_n'\|_2^{\frac{p}{2}-\frac{q}{2}}\|u_n\|_2^{\frac{p}{2}-\frac{q}{2}}		
	\end{split}
\end{equation*}
for some $C>0$, where we used the Gagliardo-Nirenberg inequalities \eqref{GN} and \eqref{GN_infty}. Since $3-\frac{q}{2}>\frac{p}{2}-\frac{q}{2}$, this proves that $\overline{\alpha}>-\infty$ and that $\|u_n'\|_2^2$ is bounded. \\
Regarding the case $p=6$, we can focus on $\mu\in(\mu_{\cG}, \mu_{\mathbb{R}}]$. First of all, we observe that 
\begin{equation}
\label{disug su u_n non fa vanishing}
Q_{6,q}(u_n)=\frac{\frac{1}{2}\|u_n'\|_2^2}{\frac{1}{q}\|u_n\|_q^q}-\frac{\frac{1}{6}\|u_n\|_6^6}{\frac{1}{q}\|u_n\|_q^q} \geq -\frac{q}{6}\frac{\|u_n\|_6^6}{\|u_n\|_q^q} \geq -\frac{q}{6}\frac{\|u_n\|_q^q\|u_n\|_{\infty}^{6-q}}{\|u_n\|_q^q}=-\frac{q}{6}\|u_n\|_{\infty}^{6-q}.
\end{equation}	
Since $Q_{6,q}(u_n)\rightarrow \overline{\alpha}<0$, 
\begin{equation}
	\label{u_n non fa vanishing}
\exists C>0 \text{ such that } \|u_n\|_{\infty}>C \text{ for sufficiently large $n$}.
\end{equation}
Moreover, since $\overline{\a}$ is negative, the numerator of $Q_{6,q}(u_n)$ is (eventually) negative as well, and the Gagliardo-Nirenberg inequality \eqref{GN_infty} implies that 
\[
\exists C>0 \text{ such that } \frac{1}{6}\|u_n\|_6^6>\frac{1}{2}\|u_n'\|_2^2 \geq C \|u_n\|_\infty^4 \text{ for sufficiently large $n$.}
\]
This estimate leads to
\begin{equation}
\label{u_n_q non fa vanishing}
\|u_n\|_q^q \geq \frac{\|u_n\|_6^6}{\|u_n\|_{\infty}^{6-q}} \geq C\frac{\|u_n\|_{\infty}^4}{\|u_n\|_{\infty}^{6-q}}=C\|u_n\|_{\infty}^{q-2} \geq C,
\end{equation}
eventually as $n \rightarrow +\infty$, where we used \eqref{u_n non fa vanishing} and the fact that $q-2>0$. By Lemma \ref{caratterizzazione alpha}, we deduce that 
\[
\overline{\alpha}=\lim\limits_{n}\frac{E_6(u_n)}{\frac{1}{q}\|u_n\|_q^q} \geq \frac{\mathcal{E}_6(\mu)}{C}>-\infty,
\]
for some $C>0$, by Theorems \ref{1_periodic_critical}-($iii$) and \ref{teorema critico grafi 2-periodici}-($iii$). It remains to show that a minimizing sequence for $Q_{p,q}$ in $H^1_\mu$ is bounded. By the modified Gagliardo-Nirenberg inequality in Lemma \ref{lem: mod GN} and \eqref{u_n_q non fa vanishing}
\[
\overline{\alpha} \leftarrow Q_{6,q}(u_n)=\frac{\frac{1}{2}\|u_n'\|_2^2-\frac{1}{6}\|u_n'\|_6^6}{\frac{1}{q}\|u_n\|_q^q} \geq \frac{\frac{1}{2}\|u_n'\|_2^2\left(1-\left(\frac{\mu-\theta_n}{\mu_{\mathbb{R}}}\right)^2\right)-C\theta_n^{\frac{1}{2}}}{C} \geq -C\theta_n^{\frac{1}{2}},
\]
 for some $C>0$ and $\theta_n \in [0,\mu]$. Since $\overline{\alpha}<0$, we infer that $\theta_n \ge C$. Therefore, $Q_{6,q}(u_n)$ can be estimated as follows:
 \[
Q_{6,q}(u_n)\geq \frac{\frac{1}{2}\|u_n'\|_2^2\left(1-\left(\frac{\mu-\theta_n}{\mu_{\mathbb{R}}}\right)^2\right)-C\theta_n^{\frac{1}{2}}}{C} \geq C\|u_n'\|_2^2\left(1-\left(\frac{\mu-C}{\mu_{\mathbb{R}}}\right)^2\right)-C\mu^\frac{1}{2},
 \]
where the coefficient of $\|u_n'\|_2^2$ is strictly positive. Hence, $\|u_n'\|_2^2$ must be bounded, as desired.

At this point, both for $p<6$ and for $p=6$, we consider a bounded minimizing sequence $\{u_n\} \subseteq H^1_{\mu}$ for $Q_{p,q}$. The fact that $Q_{p,q}(u_n) \to \overline{\alpha}$ 
implies that $E_{p,q,\overline{\alpha}}(u_n) \to 0=\mathcal{E}_{p,q,\overline{\alpha}}(\mu)$. Hence, $\{u_n\} \subseteq H^1_{\mu}$ is a bounded minimizing sequence for $E_{p,q,\overline{\alpha}}$. Up to a subsequence, $u_n \rightharpoonup u$ weakly in $H^1$. By Lemma \ref{dichotomy combined defocusing}, we deduce that either $u$ is a ground state of mass $\mu$ for $E_{p,q,\overline{\alpha}}$, or $u \equiv 0$ and $\|u\|_{L^\infty(\cG)} \to 0$. We prove that the case $u \equiv 0$ cannot occur. If $u_n \rightharpoonup 0$, then 
\begin{equation*}
	\overline{\alpha} = \lim\limits_{n}\frac{\frac{1}{2}\|u_n'\|_2^2-\frac{1}{p}\|u_n\|_p^p}{\frac{1}{q}\|u_n\|_q^q}\geq \liminf\limits_n\frac{q}{2}\frac{\|u_n'\|_2^2}{\|u_n\|_q^q}-\frac{q}{p}\frac{\|u_n\|_q^q}{\|u_n\|_q^q}\|u_n\|_{\infty}^{p-q}= \liminf\limits_n\frac{q}{2}\frac{\|u'_n\|_2^2}{\|u_n\|_q^q}\geq 0
\end{equation*}
which is a contradiction, since $\overline{\alpha}<0$. Therefore, $u$ is a ground state of mass $\mu$ for $E_{p,q,\overline{\alpha}}$. 
\end{proof}

The proof of the point 3 can be adapted also to metric graphs $\mathcal{G}$ with a finite number of edges.

\begin{proof}[Proof of Theorem \ref{alpha critical 1}]
Let $K$ be the compact core of $\mathcal{G}$, given by the union of the edges of finite measure, and let $\{e_i\}$ be the half-lines of $\mathcal{G}$, with $i \in \{1,2, \dots, k\}$, $k \in \mathbb{N}$. Name $0_i$ the starting point of each half-line $e_i$ and, for $P \in e_i$, assign coordinates $x_P \in [0, +\infty)$ such that $x_P=0$ if and only if $P=0_i$. \\
Proving that $Q_{6,q}$ admits a ground state in $H^1_{\mu}$ implies that $\overline{\alpha}>-\infty$, and that there exists a minimizer for $\cE_{6,q,\overline{\alpha}}(\mu)=0$. Let $\{u_n\} \subseteq H^1_{\mu}$ be the minimizing sequence for $Q_{6,q}$. As in the previous proof, $\{u_n\}$ is bounded in $H^1$ and $\|u_n\|_q^q \ge C>0$; suppose by contradiction that $u_n \rightharpoonup 0$, from which $u_n \rightarrow 0$ in $L^{\infty}(K)$. Therefore, $u_n \rightarrow 0$ in $L^p(K)$ for each $p \geq 2$, and $u_n(0_i) \leq 1$ for each $i$, for $n$ sufficiently large. 
At this point, we define $k$ functions in $H^1(\mathbb{R})$, obtained from the restrictions ${u_n}_{|{e_i}}$. 
\begin{equation*}
\text{For each $i \in \{1 \dots k\}$,} \quad    u^i_n(x):=\begin{cases}
        0 \quad &\text{ if } x<-1,
        \\u_n(0_i)(x+1) \quad &\text{ if } -1\leq x <0, \\
        {u_n}_{|e_i}(x) & \text{ if } x \geq 0,
    \end{cases}
\end{equation*}
so that:
\begin{itemize}
    \item for each $i$, $u^i_n \in H^1(\mathbb{R})$, since the continuity condition holds;
    \item $\sum_{i}\|u^i_n\|_p^p=\|u_n\|_p^p+o(1)$ for all $p \geq 2$, since $\|u_n\|_{L^p(K)}^p\rightarrow 0$ and $\|u^i_n\|_{L^p([-1,0])}^p \rightarrow 0$  
    \item $\sum_{i}\|(u^i_n)'\|_2^2\leq \|u'_n\|_2^2+o(1)$, since
    \begin{equation*}
        \sum_i\|(u^i_n)'\|_2^2 = \sum_i\|(u^i_n)'\|_{L^2([0, +\infty))}^2+o(1) = \sum_i\|u_n'\|_{L^2(e_i)}^2+o(1) \leq \|u_n'\|_2^2 + o(1)
    \end{equation*}
    (where we used that $\|(u^i_n)'\|_{[-1,0]}^2 \rightarrow 0$ in the first equality).
    \item $\|u^i_n\|_2^2 \le \mu_\R+o(1)$ for every $i$, since 
    \begin{equation}\label{1511}
    \|u^i_n\|_2^2 \leq \sum\limits_{i}\|u^i_n\|_2^2=\|u_n\|_2^2+o(1) \leq \mu_{\mathbb{R}}+o(1).
\end{equation}
\end{itemize}
With this transformed sequence in hand, we conclude that 
\begin{equation}
\label{8.24}
\begin{split}
    Q_{6,q}(u_n) &\geq \frac{\frac{1}{2}\sum_{i}\|(u^i_n)'\|_2^2-\frac{1}{6}\sum_{i}\|u^i_n\|_6^6 +o(1)}{\frac{1}{q}\|u_n\|_q^q} \\
    &= \frac{\sum_i \frac{1}{2}\|(u_n^i)'\|_2^2\left(1-\frac{\|u_n^i\|_6^6}{\|(u_n^i)'\|_2^2\|u_n^i\|_2^4}\|u_n^i\|_2^4\right) +o(1)}{\frac{1}{q}\|u_n\|_q^q}\\
    &\geq \frac{\sum_i \frac{1}{2}\|(u_n^i)'\|_2^2\left(1-\left(\frac{\|u_n^i\|_2^2}{\mu_{\mathbb{R}}}\right)^2\right) +o(1)}{\frac{1}{q}\|u_n\|_q^q} \geq \frac{o(1)}{\|u_n\|_q^q} =o(1),
\end{split}
\end{equation}
in contradiction with the fact that $Q_{6,q}(u_n)\rightarrow \overline{\alpha}<0$. 
In the second to last step, we use definition \eqref{critical mass} with $\cG=\R$, and, in the last step, we used \eqref{1511} and the fact that $\|u_n\|_q \ge C$. 
\end{proof}

\medskip

\noindent \textbf{Data availability:} No data were used for the research described in the article.

\end{document}